\newtheorem{theorem}{Theorem}[section]
\newtheorem{proposition}[theorem]{Proposition}
\newtheorem{lemma}[theorem]{Lemma}
\newtheorem{corollary}[theorem]{Corollary}
\newtheorem{remark}{Remark}[section]
\theoremstyle{definition}
\newtheorem{definition}[theorem]{Definition}
\newcommand{\R}{\mathbb{R}}
\newcommand{\C}{\mathbb{C}}
\newcommand{\N}{\mathbb{N}}
\newcommand{\Z}{\mathbb{Z}}
\newcommand{\T}{\mathbb{T}}
\newcommand{\lap}{\bigtriangleup}
\newcommand{\E}{\mathbb E}
\renewcommand{\Re}{\mbox{Re}}
\newcommand{\D}{\varmathbb D}
\newcommand{\re}{\textrm{Re}}
\newcommand{\an}[1]{\langle #1 \rangle}
\begin{document}

\title{Singularities in the weak turbulence regime for the quintic Schrödinger equation}
\author{Anne-Sophie de Suzzoni\footnote{CMLS, \'Ecole Polytechnique, CNRS, Universit\'e Paris-Saclay, 91128 PALAISEAU Cedex, France, \texttt{anne-sophie.de-suzzoni@polytechnique.edu}}}

\maketitle

\begin{abstract} In this paper, we discuss the problem of derivation of kinetic equations from the theory of weak turbulence for the quintic Schrödinger equation. We study the quintic Schrödinger equation on $L\T$, with $L\gg 1$ and with a non-linearity of size $\varepsilon\ll 1$. We consider the correlations $f(T)$ of the Fourier coefficients of the solution at times $t = T\varepsilon^{-2}$ when $\varepsilon\rightarrow 0$ and $L\rightarrow \infty$. Our results can be summed up in the following way : there exists a regime for $\varepsilon$ and $L$ such that for $T$ dyadic, $f(T)$ has the form expected from the physics literature, but such that $f$ has an infinite number of discontinuity points.
\end{abstract}

\tableofcontents

\section{Introduction}

In this paper, we discuss the problem of derivation of kinetic equations from the theory of weak turbulence for the quintic Schrödinger equation. 

Weak turbulence has been introduced by Zakharov, see for example \cite{zakharov1967}, and an extensive literature has been developed in Physics, as it is reviewed in the book by Nazarenko, \cite{Naz}. 

We first describe the expected result before describing our framework and results.

Consider the quintic Schrödinger Cauchy problem :
\begin{equation}\label{quinticCauchyprob}
\left \lbrace{\begin{array}{c}
i\partial_t u_{\varepsilon,L} = -\lap u_{\varepsilon,L} + \varepsilon |u_{\varepsilon,L}|^4 u_{\varepsilon,L} \\
u_{\varepsilon,L}(t=0) = a_L
\end{array}} \right.
\end{equation}
on the torus $L\T$, $L\gg 1$, with $\varepsilon \ll 1$ and with initial datum
\[
a_L = \sum_{k\in \Z} a(k/L)\frac{e^{ikx/L}}{\sqrt{2\pi L}} g_k
\]
where $(g_k)_k$ is a sequence of independent centered and normalized Gaussian variables, and $a$ is a smooth, compactly supported map. 

The map $u_{\varepsilon,L}$ is a map from $\R\times (L\T)\times \Omega$ where $\Omega$ is a probability space supporting the $(g_k)_k$, and $\lap$ is the Laplace-Beltrami operator on $L\T$. 

The issue at stake is the description of the dynamics of
\[
\E(|\hat u_{\varepsilon,L}(t,k)|^2)
\]
as $\varepsilon$ goes to $0$ and $L$ goes to $\infty$, for any $k\in \frac1{L}\Z$ , where 
\[
\hat u_{\varepsilon,L}(t,k) := \frac1{\sqrt{2\pi L}} \int_{L\T} e^{-ikx} u_{\varepsilon,L}(t,x)dx
\] 
is the Fourier transform of $u_{\varepsilon,L}$.

Note that because the law of the initial datum $a_L$ and the equation are invariant under the action of space translations, we have 
\[
\E(\overline{\hat u_{\varepsilon,L}(t,k')} \hat u_{\varepsilon,L}(t,k)) = 0
\]
at all times $t\in \R$, if $k\neq k'$. 

The ersatz is the following. Approaching $u_{\varepsilon,L}$ by its $0$-th, first and second Picard iterates, we get up to second order in $\varepsilon$,
\[
u_{\varepsilon,L}(t) \sim e^{it\lap }a_L + \varepsilon b_L(t) + \varepsilon^2 c_L(t)
\]
where $b_L$ and $c_L$ are solutions to the equations
\[
\left \lbrace{\begin{array}{c}
i\partial_t b_L = -\lap b_L + |e^{it\lap }a_L|^4(e^{it\lap }a_L) \\
i\partial_t c_L = -\lap c_L + 3 |e^{it\lap }a_L|^4 b_L + 2\bar{b_L} |e^{it\lap }a_L|^2(e^{it\lap }a_L)^2\end{array}}\right.
\]
with initial datum $b_L(t=0) = c_L(t=0) = 0$.

Up to second order, we thus get
\[
\partial_t \E(|\hat u_{\varepsilon,L}(t,k)|^2) \sim 2\varepsilon \Re \Big( \overline{\widehat{e^{it\lap}a_L}(t,k)}\partial_t \hat b_L (t,k) \Big) + 2 \varepsilon^2 \Re \Big( \overline{\widehat{e^{it\lap}a_L}(t,k)}\partial_t \hat c_L (t,k) + \overline{\hat b_L (t,k)}\partial_t \hat b_L (t,k)\Big)
\]
Because of probabilistic cancellations due to the law of the initial datum $a_L$, the term
\[
\Re \Big( \overline{\widehat{e^{it\lap}a_L}(t,k)}\partial_t \hat b_L (t,k) \Big)
\]
involves only first order trivial resonances, which can be removed from the solution by multiplicating y a phase, see \cite{denghani19,LukSpo} for a discussion in different contexts. 

This suggests that the right scale of time is $\varepsilon^{-2}$, and thus we consider the quantity
\[
U_{\varepsilon,L}(t,k) = \E(|\hat u_{\varepsilon,L}(t\varepsilon^{-2},k)|^2)
\]
with
\[
\partial_t U_{\varepsilon,L} (t,k) \sim 2\Re \Big( \overline{\widehat{e^{it\varepsilon^{-2}\lap}a_L}(k)}\partial_t \hat c_L (t\varepsilon^{-2},k) + \overline{\hat b_L (t\varepsilon^{-2},k)}\partial_t \hat b_L (t\varepsilon^{-2},k)\Big)
\]
This is valid only if the right hand side is not null.

By taking $L\rightarrow \infty$ and $\varepsilon\rightarrow 0$, we expect to get
\begin{equation}\label{expectedresult}
U_{\varepsilon,L}(t,k) = \frac3{2\pi^3}\int_{\R^5} \delta(k + \sum_{j=1}^5 (-1)^j k_j)  \delta (\Delta (k,\vec k) ) \frac1{k-k_1 + k_2 - k_3 }|a(k)|^2 \prod_{j=1}^5|a(k_j)|^2F_a(k,\vec k)d\vec k
\end{equation}
where $\vec k = (k_1,\hdots,k_5)$, where
\[
F_a(k,\vec k) = \Big( \frac1{|a(k)|^2}-\frac1{|a(k_1)|^2}+\frac1{|a(k_2)|^2}-\frac1{|a(k_3)|^2}+\frac1{|a(k_4)|^2}-\frac1{|a(k_5)|^2}\Big),
\]
where $\Delta (k,\vec k) = k^2 - k_1^2+k_2^2-k_3^2+k_4^2 - k_5^2$ and where the $\delta$s are Dirac deltas. The integral converges.

The aim of this paper is to illustrate the fact that this limit is far from being obvious. The reason is that the Picard expansion does not uniformly converge in $L$ for times of order $\varepsilon^{-2}$, the $L^2$ Lebesgue norm of the initial datum $a_L$ growing with $L$. The series of paper \cite{BGHS,CoG19,CoGe20,denghani19} have reached larger and larger times by developing fine analytic estimates for the cubic Schrödinger equation due to the algebraic structure of the non-linearity. In particular, they estimate so-called Dyson series. In the series of papers \cite{DyKuk3,DyKuk2,DyKuk1}, Dymov and Kuksin reach a satisfying result at times of order $\varepsilon^{-1}$, by using quasi-solutions for the cubic Schrödinger equation, a notion coming from the Physics literature. We take here an approach closer to their point of view. We also mention the works \cite{LukSpo} on cubic equations on nets, and \cite{dS15,dSTont} on quadratic equations coming fluid mechanics. For a general bibliographical review on connected subjects, we refer to \cite{CoGe20} and references therein. 

Apart from the model (quintic Vs cubic or quadratic equations), chosen to get a nontrivial manifold for the first order resonances, even when the dimension is 1, the main differences with the above literature are threefold. One important aspect of this work is that, to justify  
the use of quasi-solutions, we use the stochastic tool of Wick products. A side advantage of this is that we avoid certain possible correlations, that would diverge at times of order $\varepsilon^{-2}$. Another main difference is that, in order to reach times of order $\varepsilon^{-2}$, we prepare the approximation of the equation in $L\T$. We justify this approximation in the next subsections. It allows to reach very large times. Finally, we exhibit a regime for $\varepsilon$ and $L$ such that depending on the time $t\in \R$, we can get different behaviors for the sequence $U_{\varepsilon,L}(t,k)$. 

We now describe our framework and results.

\subsection{Framework}

Let $(\xi_k)_{k\in \Z}$ be a sequence of real, independent, centered and normalized Gaussian variables. We write $(\Omega, \mathcal F, \mathbb P)$ their underlying probability space and $\mathcal A$ the $\sigma$-algebra generated by the $(\xi_k)_k$. Given a well-chosen sequence $(q_k)_{k\in \Z}$ of positive real numbers, we define $\mathcal S_{-1}(H^s(L\T))$, $s>\frac1{2}$ the space of Kondratiev's distributions of $(\Omega, \mathcal A, \mathbb P)$ on $H^s(L\T)$. For the exact definition of Kondratiev's distributions, we refer to Subsection \ref{subsec:Kondratiev}. Here, we use the terminology of \cite{KL2010,LOP2004}, and for the general definition of Kondratiev's distributions, we refer to the original work \cite{KSWY1998}. The Wick product (see again Subsection \ref{subsec:Kondratiev}) is well-defined on $\mathcal S_{-1}(H^s(L\T))$, we denote it $\diamond$. 

Let $F_L$ be the map defined on Fourier mode by, for all $k\in \frac1{L}\Z$,
\[
\widehat{F_L} (\alpha,\beta,\gamma,\delta,\eta)(k) = \frac1{(2\pi L)} \sum_{C_L(k)} \hat \alpha (k_1) \diamond \overline{\hat \beta (k_2)} \diamond \gamma(k_3) \diamond \overline{\delta}(k_4) \diamond \eta(k_5)
\]
where
\begin{multline}\label{defCL}
C_L(k) = \Big\{ (k_1,k_2,k_3,k_4,k_5) \in \Big( \frac1{L}\Z\Big)^5 \;\Big|\; k_1-k_2+k_3-k_4+k_5 = k,  \\ 
 |k-k_1+k_2-k_3|\geq \mu^{-1}(L), \quad
|k^2 -k_1^2 + k_2^2 - k_3^2+k_4^2-k_5^2| \geq \nu^{-1}(L) \Big\}
\end{multline}
and where we used the abuse of notation
\[
\hat \alpha(k_1) \diamond \hat \beta (k_2) = \Big( (\hat \alpha(k_1) e^{ik_1x}) \diamond (\hat \beta (k_2) e^{ik_2 x}) \Big) e^{-i(k_1+k_2)x}.
\]

Note that taking $(u_L)$ a sequence of maps such that $u_L \in \mathcal S^{-1}(H^s(L\T))$ and such that for any $\alpha \in \N_f^\Z$, the sequence $\|(u_L)_\alpha\|_{H^s(L\T)}$ is uniformly bounded in $L$ (see Remark \ref{rem:boundSol} for the relevance of this property), then we have that for all smooth and compactly supported map $f$, and all $\alpha \in \N_f^\Z$,
\[
\an{f,F_L(u,u,u,u,u)_\alpha - (u\diamond \bar u\diamond u \diamond \bar u\diamond u)_\alpha} \underset{L\rightarrow \infty}{\rightarrow} 0
\]
where $\an{\cdot,\cdot}$ is the inner product in $\R$, as soon as $\nu$ goes to $\infty$ with $L$. 

We consider the Cauchy problem
\begin{equation}\label{improvedCauchyprob}
\left \lbrace{\begin{array}{c}
i\partial_t u_{\varepsilon,L} = -\lap u_{\varepsilon,L} + \varepsilon F_L(u,u,u,u,u)\\
u_{\varepsilon,L}(t=0) = a_L
\end{array}} \right.
\end{equation}
where $a_L$ is given by
\[
a_L = \sum_{k\in \Z} a(k/L) \frac{e^{ikx/L}}{\sqrt{2\pi L}} g_k
\]
where $g_k = \frac1{\sqrt 2} (\xi_{\varphi(0,k)}+i\xi_{\varphi(1,k)})$ with $\varphi$ a bijection from $\{0\}\times \Z\sqcup \{1\}\times \Z$ to $\Z$. The sequence $(g_k)_k$ is a sequence of independent, centered, normalized complex Gaussian variables. This is the classical way of extending real Gaussian Hilbert spaces to complex ones. 

We call $P_N$ the projection onto the Wiener chaos of degree at most $N$.

We are now ready to state the results. 

\subsection{Results} 

\begin{theorem}\label{theo:main1} For any $L \in \N^*$, there exists a Banach algebra $X \subseteq \mathcal S_{-1}(H^s(L \T))$ into which the Cauchy problem \eqref{improvedCauchyprob} is globally well-posed and such that $a_L \in X$. Set $N \in \N^*$, $M\geq N$, and set $f$ and $g$ two smooth compactly supported maps of $\R$. 

Assume that $\varepsilon $ writes
\[
\varepsilon^{-2} = 2\pi L^2 2^L + \rho(L),
\]
that $\rho, \nu$ and $\mu$ satisfy the following relationships : 
\begin{equation}\label{assumptionsRegime}
\left \lbrace{\begin{array}{c}
\exists \alpha >0,\, \nu(L)^{1+\alpha} = o(L^{1/2}),\\
\rho(L) = o(\mu(L)),\\ 
\rho(L)\mu(L) = o(\nu(L)),\\ 
\ln^2(\mu(L)) = o(\rho^{1/4}),\\ 
\mu(L) \rightarrow \infty,\\ 
\rho(L) \rightarrow \infty
\end{array}} \right.
\end{equation} 
and finally assume that $t$ is dyadic. 

Then, we have 
\begin{multline*}
\lim_{L \rightarrow \infty} \partial_t \E(\an{P_N U_L,f} \an{g,P_M U_L } ) (t)=\\
 \frac3{4\pi^4}\int_{\R^6} \delta(k + \sum_{j=1}^5 (-1)^j k_j)  \delta (\Delta (\vec k) ) \frac1{k-k_1 + k_2 - k_3 } \prod_{j=1}^5|a(k_j)|^2 \hat f(k) \overline{\hat g (k)} dk \prod_{j=1}^5 dk_j
\end{multline*}
and besides the integral converges.

Above, we have $U_L = u_{\varepsilon,L}(t\varepsilon^{-2})$, $\Delta(\vec k) = k^2 - k_1^2 + k_2^2 - k_3^2 + k_4^2 - k_5^2$ and the $\delta$s are Dirac deltas. 
\end{theorem}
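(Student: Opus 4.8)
The plan is to realize $u_{\varepsilon,L}$ through its Picard (Dyson) expansion in the Banach algebra $X$ and to extract, from the bilinear functional $\E(\an{P_N U_L,f}\an{g,P_M U_L})$, the unique contribution that survives the kinetic rescaling. First I would prove the global well-posedness asserted in the statement by a fixed-point argument: since $X$ is a Banach algebra on which the Wick product acts continuously, the Duhamel map $u\mapsto a_L-i\varepsilon\int_0^t e^{i(t-s)\lap}F_L(u,u,u,u,u)(s)\,ds$ is a contraction on a suitable ball, with estimates uniform enough in $L$ that the Dyson series converges and may be differentiated term by term. The decisive role of the Wick ordering is to suppress the self-interaction correlations that would otherwise grow with $L$ and diverge at times $\varepsilon^{-2}$; I would record quantitative bounds on each iterate in $X$ so that, after the chain-rule factor $\varepsilon^{-2}$, every contribution of order $\varepsilon^{3}$ or higher vanishes as $L\to\infty$ under \eqref{assumptionsRegime}.

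Writing $A=e^{it\lap}a_L$ for the linear flow and $\varepsilon b$, $\varepsilon^{2}c$ for the first two corrections (the first playing the role of the leading fluctuation in the Dymov--Kuksin quasi-solution), the chain rule gives $\partial_t\E(\an{P_N U_L,f}\an{g,P_M U_L})=\varepsilon^{-2}\,[\partial_s\E(\cdots)]_{s=t\varepsilon^{-2}}$. Because the expectation pairs the complex Gaussians by Isserlis' theorem and distinct Wiener chaoses are orthogonal, the most dangerous terms drop out: the Wick ordering keeps $A$, $b$ and $c$ in well-separated chaoses, so that only a handful of second-order pairings survive the projections, and after the $\varepsilon^{-2}$ prefactor these constitute the sole $O(1)$ contribution. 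Taking the expectation turns each paired mode $g_{k_j}$ into $|a(k_j/L)|^2$, leaving a sum over $C_L(k)$ of time integrals of the oscillatory phases $e^{is\Delta(\vec k)}$. Two features emerge from these integrals: the partially resonant combination $k-k_1+k_2-k_3$, which equals $k_5-k_4$ on the momentum constraint and is bounded below by $\mu^{-1}(L)$ on $C_L(k)$, is integrated off resonance and produces the principal factor $\tfrac1{k-k_1+k_2-k_3}$ rather than a concentration; whereas the integration over $[0,t\varepsilon^{-2}]$ of the full energy phase produces a Dirichlet-type kernel $\tfrac{\sin(t\varepsilon^{-2}\Delta)}{\Delta}$ that will concentrate on $\{\Delta=0\}$.

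I would then pass to the limit. The scaling is engineered precisely so that, with $\Delta\in\tfrac1{L^{2}}\Z$ on the lattice, $t\varepsilon^{-2}\Delta=2\pi\,2^{L}t\,(L^{2}\Delta)+\rho(L)\,t\Delta$; for $t$ dyadic the first summand lies in $2\pi\Z$ once $L$ is large, so it drops out of the exponential and $\rho(L)$ becomes the genuine effective time. The kernel then reads $\tfrac{\sin(\rho(L)t\Delta)}{\Delta}$, which converges against smooth data to $\pi\,\delta(\Delta)$ since $\rho(L)\to\infty$; this is exactly where the dyadic hypothesis is used, and its failure for non-dyadic $t$ is the germ of the discontinuities announced in the abstract. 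Simultaneously the sum over $(k_1,\dots,k_5)\in(\tfrac1L\Z)^{5}$ with spacing $L^{-5}$ becomes a Riemann integral over $\R^{5}$, the constraint $k_1-k_2+k_3-k_4+k_5=k$ producing, after the $dk$-pairing against $\hat f(k)\overline{\hat g(k)}$, the momentum delta $\delta(k+\sum_j(-1)^jk_j)$; convergence of the limiting integral (granted in the statement) lets the cutoffs $|k-k_1+k_2-k_3|\ge\mu^{-1}$ and $|\Delta|\ge\nu^{-1}$ be removed, since the integrand is integrable across the excised sets. Collecting the combinatorial weight $3$ of the quintic nonlinearity, the normalizations $(2\pi L)^{-1}$ and the Jacobians of the lattice-to-integral passage yields the announced constant $\tfrac3{4\pi^{4}}$.

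The main obstacle is this last passage, carried out uniformly in the joint regime \eqref{assumptionsRegime} while the two cutoffs $\mu(L)^{-1},\nu(L)^{-1}$ shrink to $0$. One must simultaneously (i) quantify the collapse of the $2^{L}$ phase and the equidistribution of the residual quadratic exponential sums on $\tfrac1L\Z$ at the effective time $\rho(L)$, for which $\nu(L)^{1+\alpha}=o(L^{1/2})$ and $\ln^{2}(\mu(L))=o(\rho(L)^{1/4})$ are tailored; (ii) show that the principal factor $\tfrac1{k-k_1+k_2-k_3}$, singular exactly where its cutoff is lifted, pairs with the energy-concentrated kernel without generating spurious secular growth, which is where $\rho=o(\mu)$ and $\rho\mu=o(\nu)$ enter; and (iii) bound every higher Dyson iterate so that it vanishes after multiplication by $\varepsilon^{-2}$. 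The number-theoretic alignment forced by the dyadic hypothesis in (i) and the fine balancing of the three scales $\mu,\nu,\rho$ in (ii) form the technical heart of the argument.
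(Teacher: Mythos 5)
Your overall architecture coincides with the paper's: a fixed point in a Banach algebra of Kondratiev distributions for well-posedness, a Picard expansion whose $n$-th iterate lives in a Wiener chaos of exact degree $4n+1$ so that only diagonal pairings survive the expectation, extraction of the $n=1$ pairing as the sole main term, the observation that $\Delta(\vec k)\in\frac1{L^2}\Z$ so that for dyadic $t$ the phase $2\pi 2^L L^2 t\,\Delta$ drops out modulo $2\pi$ and $\rho(L)$ becomes the effective time, the Riemann-sum passage with the cutoffs $\mu^{-1},\nu^{-1}$, and the concentration of $\sin(\rho t\Delta)/\Delta$ on $\{\Delta=0\}$. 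Your handling of the $n=1$ term and of the final limit is essentially that of Propositions \ref{prop:n=1}, \ref{prop:sumtointegral} and \ref{prop:diracdeltas}.

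The genuine gap is in your disposal of the higher Picard iterates. You propose to ``record quantitative bounds on each iterate in $X$ so that, after the chain-rule factor $\varepsilon^{-2}$, every contribution of order $\varepsilon^{3}$ or higher vanishes.'' This cannot work. The $X_t(D)$ bound on $u_{n,L}$ grows like $\an{t}^{n}$ at chaos level $4n+1$, so at $t=T\varepsilon^{-2}$ a norm estimate gives $\varepsilon^{2n}\E(|\hat u_{n,L}|^2)\lesssim \varepsilon^{2n}(T\varepsilon^{-2})^{2n}$, which diverges; and even the correct secular bound $t^{n-1}$ on $\partial_t\E(|v_{n,L}|^2)$ yields $\varepsilon^{2(n-1)}\partial_t\E(|v_{n,L}|^2)(T\varepsilon^{-2})=O(T^{n-1})$, i.e.\ $O(1)$ and not $o(1)$ after the kinetic rescaling: the powers of $\varepsilon$ are exactly consumed by the secular growth, and Wick ordering together with chaos orthogonality only forces the pairing to be diagonal in $n$ --- it does not make $\E(|u_{n,L}|^2)$ small. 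What actually eliminates these terms in the paper is the tree-by-tree resonance analysis: the sharp estimates on the iterated oscillatory integrals $G_M$ (Propositions \ref{prop:GMpair} and \ref{prop:GMimpair}) showing that the top power of $t$ is attained only when the pairwise resonance relations $\Omega_{2j-1}+\Omega_{2j}=0$ hold along the ordered tree; the proof that these relations cannot hold identically in $\vec k$ (Proposition \ref{prop:nocaseA}); and the count of degrees of freedom lost on the near-resonant set, combined with the cutoff $|\Delta|\geq\nu^{-1}$, which produces the decisive factor $\nu^{(1+\alpha)n/2}L^{-(n-1)/2}$ of Proposition \ref{prop:CaseB}. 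This is where the hypothesis $\nu^{1+\alpha}=o(L^{1/2})$ is actually consumed, not in an equidistribution statement for the $n=1$ exponential sums as your plan suggests. Without this analysis the $n\geq 2$ contributions are not controlled and the argument does not close.
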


\begin{remark} The regime $\nu = L^\alpha$, $\mu = L^\beta$, $\rho = L^\gamma$ with $0<\gamma<\beta<\alpha < \frac12$ and $\beta+\gamma <\alpha$ satisfies the assumptions \eqref{assumptionsRegime}.
\end{remark}

\begin{remark} On the one hand, this theorem applies to a class of times that are dense in $\R$. If the dyadic numbers are not satisfying, they can be changed to rational numbers by choosing the regime
\[
\varepsilon^{-2} = 2\pi L^2 L! + \rho(L).
\]
However, this will serve us to prove that 
\[
t\mapsto \lim_{L \rightarrow \infty} \partial_t \E(\an{P_N U_L,f} \an{g,P_M U_L } ) (t)
\]
has a chaotic behavior. 
\end{remark}

\begin{theorem}\label{theo:main2} With the same notations as in Theorem \ref{theo:main1}, assuming that $\varepsilon$ writes 
\[
\varepsilon^{-2}= 2\pi L^2 2^L + \rho(L),
\]
that $\rho,\nu$ and $\mu$ satisfy \eqref{assumptionsRegime}  and that $t \in \frac13 + \D$, $\D$ being the set of dyadic numbers, we have 
\begin{multline*}
\lim_{L \rightarrow \infty} \partial_t \E(\an{P_N U_L,f} \an{g,P_M U_L } ) (t)= \\ \frac1{12\pi^4}\int_{\R^6} \delta(k + \sum_{j=1}^5 (-1)^j k_j)  \delta (\Delta (\vec k) ) \frac1{k-k_1 + k_2 - k_3 } \prod_{j=1}^5|a(k_j)|^2 \hat f(k) \overline{\hat g (k)} dk \prod_{j=1}^5 dk_j
\end{multline*}
and besides the integral converges.
\end{theorem}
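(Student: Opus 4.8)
The proof of Theorem \ref{theo:main2} runs along exactly the same lines as that of Theorem \ref{theo:main1}: the global well-posedness in $X$, the Wick-ordered Picard/Dyson expansion, the estimates discarding the higher iterates, and the reduction of $\partial_t\E(\an{P_N U_L,f}\an{g,P_M U_L})(t)$ to a single explicit resonant sum over $\vec k\in(\frac1L\Z)^5$ are all insensitive to the precise value of $t$. The only place where $t$ enters the limit is through the oscillatory factor $e^{it\varepsilon^{-2}\Delta(\vec k)}$ carried by the second iterate $c_L$ and by the quadratic term $\overline{\hat b_L}\,\partial_t\hat b_L$. Writing $k=n/L$ and $k_j=n_j/L$ with $n,n_j\in\Z$, the resonance is an integer divided by $L^2$, namely $\Delta(\vec k)=m/L^2$ with
\[
m=n^2-n_1^2+n_2^2-n_3^2+n_4^2-n_5^2\in\Z,\qquad n=n_1-n_2+n_3-n_4+n_5,
\]
so that $t\varepsilon^{-2}\Delta(\vec k)=2\pi t\,2^L m+t\rho(L)\,m/L^2$.

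In the proof of Theorem \ref{theo:main1} the hypothesis that $t$ is dyadic is used precisely to force $t\,2^L\in\Z$ for $L$ large, whence $e^{2\pi i t 2^L m}=1$ and the sum yields the stated integral with constant $\frac3{4\pi^4}$, the factor $\delta(\Delta)$ arising from $\frac{\sin(t\rho(L)\Delta)}{\Delta}\to\pi\delta(\Delta)$ under \eqref{assumptionsRegime}. The plan here is to redo only this last step. For $t=\frac13+d$ with $d\in\D$ and $L$ large one has $t\,2^L=2^L/3+d\,2^L$ with $d\,2^L\in\Z$, so
\[
e^{2\pi i t 2^L m}=e^{2\pi i \varepsilon_L m/3},\qquad \varepsilon_L:=2^L\bmod 3\in\{1,2\},
\]
with $\varepsilon_L$ alternating with the parity of $L$. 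Thus the resonant sum of Theorem \ref{theo:main1} is now modulated by the cubic phase $e^{2\pi i\varepsilon_L m/3}$, which depends only on $m\bmod 3$.

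First I would show that, as $L\to\infty$, this modulation decouples from the delta-producing mechanism and may be replaced by its average against the asymptotic distribution of $m\bmod 3$ on the near-resonant layer. Since $a$ is smooth and compactly supported, on each $O(1)$ window the integers $n_j$ equidistribute among the residue classes modulo $3$; hence the residues $r_j=n_j\bmod 3$ become, in the limit, independent and uniform in $\Z/3\Z$, and the required constant is $\frac3{4\pi^4}$ times
\[
\E\big[\,e^{2\pi i\varepsilon_L m/3}\,\big]=\E\big[\,\omega^{\pm m}\,\big],\qquad \omega=e^{2\pi i/3},
\]
the expectation being taken over $r_1,\dots,r_5$ uniform in $\Z/3\Z$ with $m\equiv n^2-\sum(-1)^{j+1}n_j^2$ and $n\equiv\sum(-1)^{j+1}n_j$.

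The remaining step is purely arithmetic. Because a square is $\equiv0$ or $1\pmod3$ according as $3$ divides the base or not, one has $m\equiv s-s_1+s_2-s_3+s_4-s_5\pmod3$ with $s_j=\mathbf 1_{r_j\neq0}$ and $s=\mathbf 1_{r\neq0}$, $r=r_1-r_2+r_3-r_4+r_5$. Writing $\omega^{s}=\omega-(\omega-1)\mathbf 1_{r=0}$ and $\mathbf 1_{r=0}=\frac13\sum_{j=0}^{2}\omega^{jr}$, the expectation factorizes over the five independent variables, and a direct evaluation gives $\E[\omega^{m}]=\E[\omega^{2m}]=\frac19$, real and independent of the parity of $L$. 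Since this average is real, the average of $\sin(2\pi\varepsilon_L m/3)$ vanishes and the principal-value contribution drops, while the average $\frac19$ of $\cos(2\pi\varepsilon_L m/3)$ rescales the $\delta(\Delta)$ term; the limit is therefore $\frac3{4\pi^4}\cdot\frac19=\frac1{12\pi^4}$ times the same integral, as claimed. The main obstacle is the decoupling asserted in the previous paragraph: one must establish, uniformly under \eqref{assumptionsRegime} and in the presence of the cut-offs $|m|\ge L^2/\nu(L)$ and $|k-k_1+k_2-k_3|\ge\mu^{-1}(L)$, that the slowly varying smooth density may be frozen while the fast cubic phase $e^{2\pi i\varepsilon_L m/3}$ is averaged out — that is, that the mod-$3$ equidistribution of $m$ holds at the scale of the resonant layer and is genuinely independent of the limit producing $\delta(\Delta)$.
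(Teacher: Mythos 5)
Your overall strategy is the one the paper follows: reduce to the $n=1$ resonant sum, observe that for $t\in\frac13+\D$ the phase $e^{2\pi i t 2^L m}$ becomes $e^{2\pi i\varepsilon_L m/3}$ with $\varepsilon_L=2^L\bmod 3$, partition the lattice sum by the residues of the $n_j$ modulo $3$, and average. Your arithmetic is also correct: the paper counts, over the $3^5$ residue configurations, $99$ cases with $m\equiv 0$, $72$ with $m\equiv 1$ and $72$ with $m\equiv 2$, so that the average of $e^{2\pi i\varepsilon_L m/3}$ is $\frac{99-72}{243}=\frac19$, exactly your $\E[\omega^{\pm m}]=\frac19$, and $\frac3{4\pi^4}\cdot\frac19=\frac1{12\pi^4}$ is the right constant.

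There is, however, a genuine gap at the step where you write that ``since this average is real, the average of $\sin(2\pi\varepsilon_L m/3)$ vanishes and the principal-value contribution drops.'' After expanding $\sin(\Delta(\vec k)\varepsilon^{-2}t)$, the $\sin$-phase coefficient multiplies $\frac{\cos(\Delta(\vec k)t\rho)}{\Delta(\vec k)}$, and it is the constant part $\frac1{\Delta(\vec k)}$ of this factor that causes trouble: on each individual residue class the sum $\frac1{L^5}\sum_{\vec k}\frac1{\Delta(\vec k)}\prod_j|a(k_j)|^2$ is only bounded by $\nu(L)\to\infty$ because of the cut-off $|\Delta|\geq\nu^{-1}$, and it does not converge to anything on its own. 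The vanishing of the average of $\sin(2\pi\varepsilon_L m/3)$ only tells you that these divergent quantities appear with coefficients $\pm\frac{\sqrt3}{2}$ in equal numbers; to conclude you must show that the sums attached to two paired residue classes $\kappa$ and $\bar\kappa$ agree up to $o(1)$. The paper does this by an explicit shift $\vec k\mapsto\vec k+(\bar\kappa-\kappa)/L$ and a three-part comparison (boundary terms where one of the constraints $|\Delta|\geq\nu^{-1}$, $|\bar k|\geq\mu^{-1}$ is gained or lost under the shift, the variation of $\frac1\Delta$, and the variation of the smooth density), producing the extra error $O\bigl(\frac{\mu\nu}{L}+\frac{\nu^2}{L}\bigr)$ which is where the hypotheses $\nu^{1+\alpha}=o(L^{1/2})$ and $\rho\mu=o(\nu)$ are actually consumed. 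By contrast, the part of the ``decoupling'' you single out as the main obstacle --- replacing each residue-class-restricted Riemann sum of the bounded, Lipschitz function $\frac{\sin(\Delta t\rho)}{\Delta}$ by $\frac1{3^5}$ times the integral --- is handled by the same mesh-$3/L$ Riemann-sum argument as in the dyadic case and is not where the difficulty lies. So the missing ingredient is a quantitative cancellation argument for the $1/\Delta$ term between paired residue classes, not an equidistribution statement.
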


\begin{remark} The difference is in the constant in front of the integral.\end{remark}

\begin{remark} The behavior on the sequence  $\partial_t \E(\an{P_N U_L,f} \an{g,P_M U_L } )$ depends mainly on the behavior of the sequence
\[
2^L t - \lfloor 2^L t\rfloor.
\]
In the case when $t$ is rational, because the sequence $2^L t -  \lfloor 2^L t\rfloor$ is pre-periodic, we believe that 
\begin{multline*}
\lim_{L \rightarrow \infty} \partial_t \E(\an{P_N U_L,f} \an{g,P_M U_L } ) (t)= \\
C(t)\int_{\R^6} \delta(k + \sum_{j=1}^5 (-1)^j k_j)  \delta (\Delta (\vec k) ) \frac1{k-k_1 + k_2 - k_3 } \prod_{j=1}^5|a(k_j)|^2 \hat f(k) \overline{\hat g (k)} dk \prod_{j=1}^5 dk_j
\end{multline*}
or at least that the sequence admits a finite number of adherence values of this form.

The behavior of $2^L t - \lfloor 2^L t\rfloor$ when $t$ is irrational is not so obvious. We recall that the closure of
\[
\{2^L t - \lfloor 2^L t\rfloor \; |\; L\in \N\}
\]
is either the torus $\R/\Z$ or a subset of null Haar measure, but that it is almost surely the torus. When $(2^L t - \lfloor 2^L t\rfloor)_L$ is dense in the torus, we believe that the sequence 
\[
(\partial_t \E(\an{P_N U_L,f} \an{g,P_M U_L } ) (t))_L
\] 
has at least an infinite number of adherence values.

For a complete description of the behavior of the sequence $2^L t  - \lfloor 2^L t \rfloor$, we refer to \cite{Douady}.
\end{remark}

\begin{remark} Given a fixed $t$, a similar argument to ours will yield that there exists a regime $\varepsilon(t,L)$ such that
\begin{multline*}
\lim_{L \rightarrow \infty} \partial_t \E(\an{P_N U_L,f} \an{g,P_M U_L } ) (t)= \\
\frac3{4\pi^4}\int_{\R^6} \delta(k + \sum_{j=1}^5 (-1)^j k_j)  \delta (\Delta (\vec k) ) \frac1{k-k_1 + k_2 - k_3 } \prod_{j=1}^5|a(k_j)|^2 \hat f(k) \overline{\hat g (k)} dk \prod_{j=1}^5 dk_j
\end{multline*}
or written differently that
\begin{multline*}
\liminf_{\varepsilon\rightarrow 0,L \rightarrow \infty}\Big| \partial_t \E(\an{P_N U_{\varepsilon,L},f} \an{g,P_M U_{\varepsilon,L} } ) (t)  \\
 - \frac3{4\pi^4}\int_{\R^6} \delta(k + \sum_{j=1}^5 (-1)^j k_j)  \delta (\Delta (\vec k) ) \frac1{k-k_1 + k_2 - k_3 } \prod_{j=1}^5|a(k_j)|^2 \hat f(k) \overline{\hat g (k)} dk \prod_{j=1}^5 dk_j\Big| = 0.
\end{multline*}
This should be compared with the result of \cite{LukSpo}.
\end{remark}

\begin{remark} This result of discontinuity tends to argue in favor of the fact that the results achieved in \cite{BGHS,CoG19,CoGe20,denghani19} are probably the best one can hope for. In particular, this goes well with the fact that the Dyson series does not converge in the cubic Schr\"odinger case, as proved in \cite{CoGe20}.
\end{remark}

\begin{remark} We restricted ourselves to the dimension 1 for seek of clarity but the techniques used do not depend on dimension, except when the dimension plays a role in the structure of first order resonances. The results could be carefully extended to higher dimension with some adaptation. \end{remark}

The proof relies on the following strategy. We first describe solutions $u_{\varepsilon,L}$ as a series (converging in Kondratiev's distributions) where each term corresponds to a certain degree in terms of Wiener chaos decomposition. Then, we describe each term of the series thanks to trees, or so-called Feynman diagrams. Finally, we analyze each of these trees and decide which of them are contributing to the considered limit.

The relevant trees are the ones with only one node, or of Wiener chaos of degree 1. The others are irrelevant for mainly two reasons : either they give a contribution of size $\varepsilon^n t^{2n}f(L)$ with $f(L) \rightarrow 0$, which gives, taking time $t$ of order $\varepsilon^{-2}$ something that goes to $0$ when $L$ goes to $\infty$; or they give a contribution of size $\varepsilon^{2n}t^mg(L)$ with $m<n$, in which case, taking as time scale $\varepsilon^{-2}$, we get a contribution of size $\varepsilon^{2(n-m)}g(L)$ and we use that $\varepsilon^{2(n-m)}$ can compensate the potentially disastrous behavior of $g(L)$.

The first case arises when, in the history of interactions between the different wavelengths, special resonances occurs. This translates as constraint equations on the wavelengths and is explained in Subsection \ref{subsec:constraint}. 

The second case arises in a more general context, which is explained in Subsection \ref{subsec:timeestimates}.

The reason we ask in $C_L(k)$ (and thus in the non-linearity) that $|\Delta(\vec k)|\geq \nu^{-1}(L)$ and then, in Assumptions \eqref{assumptionsRegime}, $\nu^{1+\alpha} = O(L^{1/2})$ is to deal with the trees presenting constraint estimates. It is probably not optimal, as explained in Remark \ref{rem:nuNotOptimal} but the proof suggests that the optimal assumption is $\nu^{1+\alpha} = o(L)$. The issue is that simply assuming $\Delta(\vec k) = 0$ ensures only that $|\Delta(\vec k)|\geq L^{-2}$. Hence, we need the condition on $\nu$ to be far enough from first order resonances.

We now explain the special regime for $\varepsilon$. If 
\[
\varepsilon^{-2}t \in \rho_t(L) + 2\pi L^2 \Z
\]
we need $\rho_t(L) = o(L)$ while $\rho_t(L)$ goes to $\infty$ to get the result. But if 
\[
\varepsilon^{-2}t = \rho_t(L) = o(L)
\]
then $\varepsilon(L)$ cannot be small enough to close the argument. So, we decided to use the degree of freedom in $2\pi L^2\Z$. To include the dyadic numbers, we chose 
\[
\varepsilon^{-2} = 2\pi L^2 2^L + \rho(L)
\]
and thus $\varepsilon$ was small enough to be able to be far from optimality regarding the trees contributing as $\varepsilon^{2n}t^mg(L)$.

Finally, the reason we ask $|k-k_1+k_2-k_3|\geq \mu^{-1}(L)$ is to be able to manipulate integrals when passing from sum to integral or when getting Dirac deltas.

\subsection{Organization of the paper}

In Section \ref{sec:Wick}, we review the definitions of Kondratiev's distributions and Wick's product. We prove global well-posedness of Equation \eqref{improvedCauchyprob}.

In Section \ref{sec:trees}, we define quintic trees and ordered quintic trees, that we use to describe the solution $u_{\varepsilon,L}$ as a sum indexed by these trees.

In Section \ref{sec:analysis}, we estimates the different contributions of the trees.

In Section \ref{sec:limits}, we take the final limits that yield to our result.

Finally, for the rest of the paper, we write $\Z/L = \frac1{L}\Z$ to lighten notations.

\subsection{Acknowledgments}

The author is supported by ANR grant ESSED ANR-18-CE40-0028.

\section{Wick's product, well-posedness of the equation}\label{sec:Wick}

\subsection{Wick's product, Kondratiev's distributions}\label{subsec:Kondratiev}

Let $\Omega, \mathcal A, \mathbb P$ be a probability onto which one can define $(\xi_k)_{k\in \Z}$ a sequence of independent Gaussian variables centered and normalized. For any $\alpha \in \N^{\Z}$ with finite support, we define 
$$
\xi_\alpha : = \prod_{k\in \Z} \frac{H_{\alpha_k}(\xi_k)}{\sqrt{\alpha_k !}},
$$
where $H_{\alpha_k}$ is the $\alpha_k$-th Hermite polynomial. It is a well-known fact that $\prod_k \sqrt{\alpha_k !}\xi_\alpha$ is the orthogonal (in $L^2$) projection of
$$
\prod_{k\in \Z} \xi_k^{\alpha_k}
$$
on the orthogonal of the polynomials of degree at most $|\alpha| - 1 = \sum_k \alpha_k -1$.

Let $\mathcal F$ be the sigma-algebra generated by the sequence $(\xi_k)_k$. We recall that for any $\phi \in L^2((\Omega, \mathcal F , \mathbb P), H^s(L\T))$, we have the decomposition (called Wiener chaos decomposition)
$$
\phi = \sum_{\alpha \in \N_f^\Z} \phi_\alpha \xi_\alpha
$$
where $\phi_\alpha \in H^s(L\T)$ and $\N_f^\Z$ is the set of sequences in $\N^\Z$ with finite support. What is more,
$$
\|\phi\|_{L^2(\Omega, H^s(L\T))}^2  = \sum_\alpha \|\phi_\alpha\|_{H^s(L\T)}^2.
$$

For more information on Wiener chaos, we refer to \cite{Janson}. For the rest of this section, we omit the dependence in $L\T$ of the Sobolev spaces.

Let $q = (q_k)_{k\in \Z} \in \C^{\Z}$, $\alpha,\beta \in \N_f^\Z$, we introduce the notations
$$
q^\alpha = \prod_k q_k^{\alpha_k} , \alpha ! = \prod_k \alpha_k ! , |\alpha| = \sum_k \alpha_k, C_{\alpha + \beta}^\alpha = \frac{(\alpha + \beta)!}{\alpha! \beta !}.
$$
Moreover, we write $(0)$ the sequence in $\N^\Z$ identically equal to $0$.

We define the Wick's product of $\xi_\alpha$ and $\xi_\beta$ as the orthogonal projection of 
$$
\xi_\alpha \xi_\beta 
$$
on the orthogonal of the polynomials of degree at most $|\alpha| + |\beta|-1$, that is 
$$
\xi_\alpha \diamond \xi_\beta =  \sqrt{C_{\alpha+\beta}^\alpha} \xi_{\alpha+\beta}.
$$

Let $(q_k)_k$ be a sequence of increasing, positive numbers such that for all $D>0$, there exists $N \in \N$ such that
$$
\sum_k \frac1{q_k^N}< \frac1{4D}.
$$

We recall that the space of Kondratiev distributions $S_{-1} (H^s)$ is defined as the inductive limit of spaces $(S_{-1,-l} (H^s))_{l\in \N}$ where $S_{-1,-l}(H^s)$ is the closure of $L^2(\Omega, H^s)$ with regards to the norm
$$
\|\phi\|_{-1,-l}^2  = \sum_\alpha \frac1{\alpha !} q^{-l\alpha} \|\phi_\alpha\|_{H^s}^2.
$$

The space of Kondratiev's distributions is the dual of the space 
\[
S_1  (H^{-s}) = \bigcap_{l\in \N} S_{1,l} (H^{-s})
\]
where $S_{1,l}(H^{-s})$ is induced by the norm
\[
\|\phi\|_{S_{1,l}}^2 = \sum_\alpha \alpha ! q^{l\alpha} \|\phi_\alpha \|_{H^{-s}}^2.
\]

For any $\phi \in S_{-1}(H^s)$, we have the chaos expansion 
$$
\phi  = \sum_\alpha \phi_\alpha \xi_\alpha
$$
therefore, we can define the Wick's product of two elements of $S_{-1}(H^s)$, $\phi$ and $\psi$ by 
$$
(\phi \diamond \psi)_\alpha = \sum_{\alpha_1 + \alpha_2  = \alpha } \sqrt{ C^{\alpha_1}_{\alpha}} \phi_{\alpha_1} \psi_{\alpha_2}.
$$
Note that for any $s>\frac12$, if $\phi \in S_{-1,-l_1}(H^s) $ and $\psi \in S_{-1,-l_2}(H^s)$, taking $l = \max(l_1 + A,l_2)$, where 
$$
\sum_k \frac1{q_k^A} < 1,
$$
we have since $H^s$ is an algebra,
\begin{multline*}
\|\phi\diamond \psi\|_{-1,-l}^2 = \sum_{\alpha} \frac1{\alpha !} q^{-l\alpha} \|(\phi \diamond \psi)_\alpha\|_{H^s}^2 =  \\
\sum_\alpha \sum_{\alpha_1,\alpha_2} \frac{q^{-l(\alpha-\alpha_1)/2}}{\sqrt{(\alpha-\alpha_1)!}}  \|\psi_{\alpha - \alpha_1}\|_{H^s} \frac{q^{-l(\alpha-\alpha_2)/2}}{\sqrt{(\alpha-\alpha_2)!}}  \|\psi_{\alpha - \alpha_2}\|_{H^s} \frac{q^{-l\alpha_1/2}}{\sqrt{\alpha_1!}}  \|\phi_{\alpha_1}\|_{H^s}  \frac{q^{-l\alpha_2/2}}{\sqrt{\alpha_2!}}  \|\phi_{\alpha_1}\|_{H^s}.
\end{multline*}
By Cauchy-Schwarz, we have 
$$
\|\phi\diamond \psi\|_{-1,-l}^2 \leq \Big( \sum_\alpha \frac{q^{-l\alpha/2}}{\sqrt{\alpha !}} \|\phi_\alpha\|_{H^s}\Big)^2 \|\psi\|_{-1,-l}^2
$$
where indeed, $\|\psi\|_{-1,-l} <\infty$. Again by Cauchy-Schwarz, we have 
$$
\|\phi\diamond \psi\|_{-1,-l}^2 \leq  \sum_\alpha q^{-(l-l_1)\alpha} \|\phi\|_{-1,-l_1}^2 \|\psi\|_{-1,-l}^2.
$$
We recognize
$$
\sum_\alpha q^{-(l-l_1)\alpha}  = \sum_{N \in \N} \Big( \sum_k \frac1{q_k^{l-l_1}} \Big)^N 
$$
which is finite since 
$$
\sum_k \frac1{q_k^{l-l_1}}
$$
is strictly less than 1.

Therefore the Wick product of 2 elements of $S_{-1}(H^s)$ is well-defined.

The space $S_{-1}(H^s)$ is called the space of Kondratiev distributions of $H^s$.

\subsection{Embeddings}

\begin{definition} Let $C_\alpha$ be defined as 
$$
C_0 = 0, C_{\alpha} = 1, \textrm{ for all }|\alpha| = 1
$$
and 
$$
C_\alpha = \sum_{\alpha_1 + \alpha_2  = \alpha }C_{\alpha_1} C_{\alpha_2}.
$$
\end{definition}

\begin{lemma} We have for all $\alpha \in \N_f^\Z$,
$$
C_\alpha \leq 4^{|\alpha|}.
$$
\end{lemma}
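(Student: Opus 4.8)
The natural approach is strong induction on $n=|\alpha|$, the total order of the multi-index, and the base cases are read off directly from the definition: for $|\alpha|=0$ we have $C_0=0\le 1=4^0$, and for $|\alpha|=1$ we have $C_\alpha=1\le 4=4^{|\alpha|}$. So the whole content is in the inductive step, and I would set things up so that the recursion only ever calls values at strictly smaller order.

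For the inductive step I fix $\alpha$ with $|\alpha|=n\ge 2$ and assume the bound for every multi-index of order $<n$. In $C_\alpha=\sum_{\alpha_1+\alpha_2=\alpha}C_{\alpha_1}C_{\alpha_2}$ the convention $C_0=0$ annihilates every decomposition in which one of the two parts vanishes; hence only decompositions with $\alpha_1,\alpha_2\neq 0$ contribute, and for those $1\le|\alpha_i|\le n-1$, so the induction hypothesis applies to both factors. This gives
\[
C_\alpha\le\sum_{\substack{\alpha_1+\alpha_2=\alpha\\\alpha_1,\alpha_2\neq 0}}4^{|\alpha_1|}4^{|\alpha_2|}=4^{n}\,\#\{(\alpha_1,\alpha_2):\alpha_1+\alpha_2=\alpha,\ \alpha_1,\alpha_2\neq 0\}.
\]

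The crux, and the step I expect to be the main obstacle, is precisely controlling the number of admissible decompositions, which for a general $\alpha$ equals $\prod_k(\alpha_k+1)-2$ and is \emph{not} bounded by a constant; a purely brute-force induction therefore overshoots. The clean way to isolate the genuine constant is to pass to generating functions. Introducing the formal series $F(z)=\sum_\alpha C_\alpha z^\alpha$ in the variables $(z_k)_k$, the recursion together with the two boundary conventions is equivalent to the fixed-point equation $F=S+F^2$ with $S=\sum_k z_k$: indeed $F$ has no constant term, so $F^2$ carries no term of order $\le 1$, and the equation reproduces the prescribed values $C_0=0$ and $C_\alpha=1$ at order one while giving back the convolution at every higher order. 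Selecting the branch with $F(0)=0$ of the quadratic yields $F=\tfrac12\bigl(1-\sqrt{1-4S}\bigr)=\sum_{m\ge 1}\mathrm{Cat}_{m-1}S^{m}$, and expanding $S^m$ by the multinomial theorem gives the exact value $C_\alpha=\mathrm{Cat}_{|\alpha|-1}\binom{|\alpha|}{\alpha}$ with $\binom{|\alpha|}{\alpha}=|\alpha|!/\alpha!$.

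This exact formula makes the estimate transparent: I would invoke the textbook Catalan bound $\mathrm{Cat}_m=\frac1{m+1}\binom{2m}{m}\le\binom{2m}{m}\le 4^m$, so that $\mathrm{Cat}_{|\alpha|-1}\le 4^{|\alpha|-1}$, and the only remaining point is the multinomial factor $\binom{|\alpha|}{\alpha}$. This factor equals $1$ whenever $\alpha$ is supported on a single frequency and is otherwise the true source of any extra growth; controlling it (or reabsorbing it into the $\alpha!$-weights that already appear in the Kondratiev norms $\|\cdot\|_{-1,-l}$ where these coefficients are used) is exactly the delicate point. The generating-function identity $F=\tfrac12\bigl(1-\sqrt{1-4S}\bigr)$ is the tool that pins it down and separates the harmless Catalan growth from the combinatorial multinomial term that has to be watched.
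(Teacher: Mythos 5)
Your argument follows the same route as the paper's own proof: introduce the generating function $F(z)=\sum_\alpha C_\alpha z^\alpha$, observe that the recursion is equivalent to the quadratic $F=\sum_k z_k+F^2$, select the branch vanishing at $0$, namely $F=\tfrac12\bigl(1-\sqrt{1-4\sum_k z_k}\bigr)=\sum_{m\ge 1}\mathrm{Cat}_{m-1}\bigl(\sum_k z_k\bigr)^m$, and read off the coefficients. (The paper works with the truncation $F_M$ and a correction term $G_M$ of degree $>M$ to make the manipulation rigorous; that is a technical device, not a different idea.) Where you part ways is the very last extraction: the paper passes from $\bigl(\sum_k z_k\bigr)^n$ to $\sum_{|\alpha|=n}z^\alpha$ \emph{without} the multinomial coefficient and concludes $C_\alpha=\frac{(2|\alpha|-3)!}{|\alpha|!\,(|\alpha|-1)!}$, a quantity depending on $|\alpha|$ alone, whereas you correctly retain it and obtain $C_\alpha=\mathrm{Cat}_{|\alpha|-1}\,|\alpha|!/\alpha!$. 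Your formula is the right one: a direct computation from the recursion gives $C_{e_1+e_2}=2$ but $C_{2e_1}=1$, and $C_{e_1+e_2+e_3}=12=\mathrm{Cat}_2\cdot 3!$, so $C_\alpha$ genuinely depends on $\alpha$ and not only on $|\alpha|$, contrary to what the paper's closed form asserts.

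Consequently the step you flag as ``the delicate point'' is not a gap you failed to close; it is the place where the lemma itself breaks down, and your proposal is incomplete only because no completion exists. Taking $\alpha=e_1+\hdots+e_n$ with $n$ distinct frequencies gives $C_\alpha=\mathrm{Cat}_{n-1}\,n!$, which already exceeds $4^n$ at $n=5$ (since $14\cdot 120=1680>1024$) and grows factorially thereafter, so the stated bound $C_\alpha\le 4^{|\alpha|}$ is false. What the generating function actually yields is $C_\alpha\le 4^{|\alpha|}\,|\alpha|!/\alpha!$. Your suggestion of reabsorbing the factor $|\alpha|!/\alpha!$ into the factorial weights of the Kondratiev norms is the natural direction for a repair, but it is not automatic: the lemma is invoked to sum $\sum_\alpha C_\alpha^2D^{2|\alpha|}q^{-l\alpha}$ in the completeness argument for $X(D)$ and, implicitly, through the identity $\sum_{\alpha_1+\hdots+\alpha_5=\alpha}\prod_jC_{\alpha_j}=C_\alpha$ in the well-posedness estimate, and each of these uses would have to be rechecked against the corrected, $\alpha$-dependent value of $C_\alpha$.
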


\begin{proof} Let $(z_k)_k \in \R^\Z$ be sequence with finite support such that
$$
\left|\sum z_k \right| < \frac14.
$$
Set $M \in \N$ and 
$$
F_M(z) = \sum_{|\alpha|\leq M}  z^\alpha C_\alpha .
$$
This is a finite sum, therefore, it converges.

We have 
$$
F_M(z) = \sum_{|\alpha| = 1}  z^\alpha + \sum_{1<|\alpha |\leq M} \sum_{\alpha_1+\alpha_2 = \alpha } z^\alpha C_{\alpha_1}C_{\alpha_2}.
$$
therefore
$$
F_M(z) = \sum_{k} z_k + F_M(z)^2 - \sum_{\mathcal A_M} C_{\alpha_1}C_{\alpha_2} z^{\alpha}
$$
where $\mathcal A_M = \{(\alpha_1,\alpha_2) | |\alpha_1|,|\alpha_2|\leq M, |\alpha_1+ \alpha_2|> M\}$. We work on the union of nonempty submanifolds of $\R^d$, $d\in \N^*$,
$$
\{z\in \R_f^{\Z} \; |\; |\sum z_k | <\frac14\textrm{ and } \left|\sum_{k} z_k - \sum_{\mathcal A_M} C_{\alpha_1}C_{\alpha_2} z^{\alpha} \right|< \frac14\}.
$$
Writing 
$$
G_M (z)=  \sum_{\mathcal C_M} C_{\alpha_1}C_{\alpha_2} z^{\alpha}
$$
we have 
$$
F_M (z) = \frac12 \pm \frac12 \sqrt{1 - 4 \sum_k z_k + 4G_M}.
$$
Because $F_M(0) = 0$, we have
$$
F_M(z) = \frac12 - \frac12 \sqrt{1 - 4 \sum_k z_k + 4G_M}.
$$
Expanding the square root, we get
$$
F_M(z) = \sum_k z_k - G_M + \sum_{n>1} \frac{(2n-3)!}{n! (n-1)!} \Big( \sum_k z_k -G_M\Big)^N.
$$
Keeping in mind that $F_M$ is a polynomial of degree at most $M$ in $z$, and that $G_M$ is a polynomial of degree at least $M+1$, we have 
$$
F_M(z) = \sum_k z_k + \sum_{1<n\leq M } \frac{(2n-3)!}{n! (n-1)!} \Big( \sum_k z_k \Big)^N.
$$
Therefore
$$
F_M(z) = \sum_k z_k + \sum_{1<n\leq M } \sum_{|\alpha|=n}  \frac{(2n-3)!}{n! (n-1)!} z^\alpha.
$$
Thus, for all $1<|\alpha| \leq M$, we have 
$$
C_\alpha = \frac{(2|\alpha|-3)!}{|\alpha|! (|\alpha|-1)!} \leq 4^{|\alpha|}.
$$
Besides, for all $|\alpha|=1$, $C_\alpha = 1 \leq 4$, which concludes the proof.
\end{proof}

\begin{definition} Let $D>0$, and $X(D)$ be the space of Kondratiev's distributions $\phi$ such that $\phi_0 = 0$ and 
$$
\sup_\alpha \frac{\|\phi_\alpha\|_{H^s}}{\sqrt{\alpha !} C_\alpha D^{|\alpha|}} < \infty
$$
endowed with the norm
$$
\|\phi\|_{X(D)} = 
\sup_\alpha \frac{\|\phi_\alpha\|_{H^s}}{\sqrt{\alpha !} C_\alpha D^{|\alpha|}}.
$$
\end{definition}

\begin{proposition} The space $X(D)$ is a Banach algebra (for the Wick product).\end{proposition}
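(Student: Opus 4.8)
The plan is to verify the two requirements of a Banach algebra separately: that $(X(D),\|\cdot\|_{X(D)})$ is complete, and that the Wick product is submultiplicative on it (so in particular maps $X(D)\times X(D)$ into $X(D)$). For completeness, I would observe that since every element of $X(D)$ has $\phi_0=0$ and $C_\alpha\geq 1$ for $|\alpha|\geq 1$, the assignment $\phi\mapsto(\phi_\alpha)_{|\alpha|\geq 1}$ identifies $X(D)$ isometrically with the weighted supremum space of $H^s$-valued families indexed by $\{\alpha\in\N_f^\Z:|\alpha|\geq 1\}$ with strictly positive weights $w_\alpha=\sqrt{\alpha!}\,C_\alpha D^{|\alpha|}$. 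Such a weighted $\ell^\infty$ space is complete because $H^s$ is: for a Cauchy sequence each rescaled coordinate $\phi^{(n)}_\alpha/w_\alpha$ is Cauchy in $H^s$, hence converges, and the uniform bound passes to the limit. I would also record the embedding $X(D)\subseteq S_{-1}(H^s)$: using $C_\alpha\leq 4^{|\alpha|}$ from the preceding lemma one gets $\|\phi_\alpha\|_{H^s}^2\leq\|\phi\|_{X(D)}^2\,\alpha!\,(16D^2)^{|\alpha|}$, so that
\[
\|\phi\|_{-1,-l}^2\leq\|\phi\|_{X(D)}^2\sum_\alpha q^{-l\alpha}(16D^2)^{|\alpha|}=\|\phi\|_{X(D)}^2\prod_k\frac{1}{1-16D^2 q_k^{-l}},
\]
which is finite once $l$ is large enough that $\sum_k 16D^2 q_k^{-l}<1$, possible by the summability assumption on $(q_k)$.

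The heart of the matter is the algebra estimate, which is exactly what the recursive definition of the $C_\alpha$ was designed to produce. For $\phi,\psi\in X(D)$ and any $\alpha$, the definition of the Wick product together with the fact that $H^s$ is an algebra gives
\[
\|(\phi\diamond\psi)_\alpha\|_{H^s}\leq c_s\sum_{\alpha_1+\alpha_2=\alpha}\sqrt{C_\alpha^{\alpha_1}}\,\|\phi_{\alpha_1}\|_{H^s}\|\psi_{\alpha_2}\|_{H^s},
\]
where $c_s$ is the algebra constant of $H^s$. Inserting the defining bounds $\|\phi_{\alpha_1}\|_{H^s}\leq\|\phi\|_{X(D)}\sqrt{\alpha_1!}\,C_{\alpha_1}D^{|\alpha_1|}$ and its analogue for $\psi$, the arithmetic factors collapse: since $C_\alpha^{\alpha_1}=\alpha!/(\alpha_1!\alpha_2!)$ one has $\sqrt{C_\alpha^{\alpha_1}}\sqrt{\alpha_1!}\sqrt{\alpha_2!}=\sqrt{\alpha!}$, while $D^{|\alpha_1|+|\alpha_2|}=D^{|\alpha|}$. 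What remains is precisely $\sum_{\alpha_1+\alpha_2=\alpha}C_{\alpha_1}C_{\alpha_2}$, which equals $C_\alpha$ by the recursive definition whenever $|\alpha|\geq 2$. Hence $\|(\phi\diamond\psi)_\alpha\|_{H^s}\leq c_s\|\phi\|_{X(D)}\|\psi\|_{X(D)}\sqrt{\alpha!}\,C_\alpha D^{|\alpha|}$, and dividing by the weight and taking the supremum yields $\|\phi\diamond\psi\|_{X(D)}\leq c_s\|\phi\|_{X(D)}\|\psi\|_{X(D)}$.

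The only subtle point is the low-order indices, which I must handle by hand since the identity $\sum_{\alpha_1+\alpha_2=\alpha}C_{\alpha_1}C_{\alpha_2}=C_\alpha$ fails for $|\alpha|\leq 1$. For $|\alpha|\leq 1$ every decomposition $\alpha_1+\alpha_2=\alpha$ forces $\alpha_1=0$ or $\alpha_2=0$, so $(\phi\diamond\psi)_\alpha$ vanishes because $\phi_0=\psi_0=0$; in particular $(\phi\diamond\psi)_0=0$, which confirms that the product stays in $X(D)$, and the estimate above is trivially valid there (both sides vanish, consistently with $\sum_{\alpha_1+\alpha_2=\alpha}C_{\alpha_1}C_{\alpha_2}=0$ at those indices). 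Finally, to obtain a genuinely submultiplicative norm I would pass to the equivalent norm $c_s\|\cdot\|_{X(D)}$, under which $\|\phi\diamond\psi\|\leq\|\phi\|\,\|\psi\|$; associativity and commutativity of $\diamond$ are inherited from $S_{-1}(H^s)$. The main obstacle is purely bookkeeping — matching the combinatorial factor $\sqrt{C_\alpha^{\alpha_1}}$ from the Wick product against the factorial weights in the norm so that the leftover sum is exactly the recursively defined $C_\alpha$ — together with the careful separate treatment of the indices $|\alpha|\leq 1$.
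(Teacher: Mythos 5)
Your proposal is correct, and it actually goes further than the paper: the paper's proof is explicitly labelled a \emph{partial proof} — it establishes completeness and the embedding $X(D)\subseteq S_{-1}(H^s)$ (via the bound $C_\alpha\leq 4^{|\alpha|}$ and the summability of $\sum_k q_k^{-l}$, essentially as you do, the only cosmetic difference being that you sum the geometric series as a product over $k$ while the paper groups terms by $|\alpha|=n$), and then omits the algebra property as ``fairly straightforward.'' You supply that omitted estimate, and your computation is the right one: the factor $\sqrt{C_\alpha^{\alpha_1}}=\sqrt{\alpha!/(\alpha_1!\alpha_2!)}$ from the Wick product exactly absorbs the $\sqrt{\alpha_1!}\sqrt{\alpha_2!}$ coming from the defining bounds on $\phi_{\alpha_1}$, $\psi_{\alpha_2}$, leaving $\sqrt{\alpha!}\,D^{|\alpha|}\sum_{\alpha_1+\alpha_2=\alpha}C_{\alpha_1}C_{\alpha_2}=\sqrt{\alpha!}\,D^{|\alpha|}C_\alpha$ for $|\alpha|\geq 2$, which is visibly what the recursive definition of $C_\alpha$ was built to achieve. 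Your separate treatment of $|\alpha|\leq 1$ (where the recursion fails but $(\phi\diamond\psi)_\alpha=0$ because $\phi_{(0)}=\psi_{(0)}=0$) and the renorming by the algebra constant $c_s$ of $H^s$ are both necessary and correctly handled; the condition $\phi_{(0)}=0$ in the definition of $X(D)$ is precisely what makes the low-order indices harmless, and your argument makes that explicit where the paper does not.
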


\begin{proof}[Partial proof] We prove that $X(D)$ is complete. Let $\phi^n$ be a Cauchy sequence in $X(D)$. We have that by definition of $X(D)$, at $\alpha$ fixed, the sequence $\phi_\alpha^n$ is Cauchy in $H^s$ and thus converges towards some $\phi_\alpha$ in $H^s$. By the usual arguments, we have 
$$
\sup_\alpha \frac{\|\phi_\alpha\|_{H^s}}{\sqrt{\alpha !} C_\alpha D^{|\alpha|}}< \infty
$$
and 
$$
\sup_\alpha \frac{\|\phi_\alpha - \phi_\alpha^n\|_{H^s}}{\sqrt{\alpha !} C_\alpha D^{|\alpha|}}\rightarrow 0
$$
when $n$ goes to $\infty$. The issue at stake is to prove that $(\phi_\alpha)_\alpha$ is indeed a Kondratiev distribution. 

We have for $l$ big enough
$$
\sum_\alpha \|\phi_\alpha\|_{H^s}^2 \frac{q^{-l\alpha}}{\alpha !} \leq \sum_{\alpha} C_\alpha^2 D^{2|\alpha|} q^{-l\alpha} \|\phi\|_{X(D)}^2.
$$
Because of the bound on $C_\alpha$, we have 
$$
 \sum_\alpha \|\phi_\alpha\|_{H^s}^2 \frac{q^{-l\alpha}}{\alpha !} \leq \sum_{\alpha}  (4D)^{2|\alpha|} q^{-l\alpha} \|\phi\|_{X(D)}^2.
$$
We recognize
$$
\sum_{\alpha}  (4D)^{2|\alpha|} q^{-l\alpha} = \sum_n (4D)^{2n}\Big( \sum_k \frac1{q_k^l} \Big).
$$
For $l$ big enough
$$
\sum_k \frac1{q_k^l} < \frac1{(4D)^2}
$$
hence $(\phi_\alpha)_\alpha$ defines an element of $S_{-1,-l}(H^s)$ and thus a Kondratiev distribution.

We omit the proof that $X(D)$ is an algebra as the fact is irrelevant for the sequel and the proof is fairly straightforward.

%
\end{proof}

\subsection{Global well-posedness of the equation}

Let $X_t(D)$ be the space of  $\phi \in \mathcal C(\R, \mathcal S_{-1}(H^s))$ such that for all $t\in \R$, $\phi(t)_{(0)}= 0$ and induced by the norm :
$$
\|\phi \|_{X_t(D)} = \sup_{t\in [-T,T]}  \an{t}^{1/4}\|\phi\|_{X(D\an{t}^{1/4})}.
$$

\begin{proposition}\label{prop:wellPosedness} Let $D_0 > 0$. There exists $D > D_0$ such that for all $\phi_0 $ in the unit ball of $X(D_0 )$ with $(\phi_0)_{(0)} = 0$, the Cauchy problem
\begin{equation}\label{Cauchyprob}\left \lbrace{\begin{array}{c}
i\partial_t \phi = -\lap \phi +\varepsilon F_L(\phi) \\
  \phi(t=0) = \phi_0
  \end{array}}\right.
\end{equation}
admits a unique global solution in $X_t (D)$ and the flow thus defined is continuous in the initial datum. 
\end{proposition}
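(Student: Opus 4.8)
The plan is to set up a fixed-point argument for the Duhamel (mild) formulation of \eqref{Cauchyprob} directly in the global-in-time weighted space $X_t(D)$, exploiting the Banach-algebra structure of $X(D)$ proved just above together with the specific scaling built into the norm. Writing the mild formulation
\[
\Phi(\phi)(t) = e^{it\lap}\phi_0 - i\varepsilon\int_0^t e^{i(t-s)\lap}F_L(\phi(s))\,ds,
\]
I would show that for $D$ chosen large enough (depending on $D_0,\varepsilon,L$) the map $\Phi$ is a contraction on a suitable ball of $X_t(D)$. Since the estimates will be uniform in time, the resulting fixed point is automatically a global solution, and the Lipschitz dependence that produces the contraction also yields continuity of the flow in the initial datum.

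Two estimates carry the argument. For the linear part I would use that $e^{it\lap}$ acts isometrically on $H^s(L\T)$ within each Wiener chaos level $\alpha$, so it preserves every $X(D')$ norm; since $\phi_0$ lies in the unit ball of $X(D_0)$ with $(\phi_0)_{(0)}=0$, only indices $|\alpha|\geq 1$ contribute, and comparing the radii $D_0$ and $D\an{t}^{1/4}$ gives $\|e^{it\lap}\phi_0\|_{X(D\an{t}^{1/4})}\leq D_0/(D\an{t}^{1/4})$, hence $\|e^{it\lap}\phi_0\|_{X_t(D)}\leq D_0/D$. The point is that the weight $\an{t}^{1/4}$ is exactly compensated by the growth of the radius $D\an{t}^{1/4}$ at which one measures.

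The heart of the proof is the nonlinear estimate, and this is where the design of $X_t(D)$ pays off. Since $\phi(s)_{(0)}=0$ and $F_L$ is quintic, every chaos coefficient of $F_L(\phi(s))$ is supported on $|\alpha|\geq 5$. Using the Banach-algebra bound together with the isometric action of conjugation and of the frequency cut-off to $C_L(k)$ on each chaos level, I would first get $\|F_L(\phi(s))\|_{X(D\an{s}^{1/4})}\leq K_L\|\phi(s)\|_{X(D\an{s}^{1/4})}^5 \leq K_L\an{s}^{-5/4}\|\phi\|_{X_t(D)}^5$ for a constant $K_L$ depending only on $L$. The crucial gain comes from changing the radius from $D\an{s}^{1/4}$ to $D\an{t}^{1/4}$ with $s\leq t$: because only $|\alpha|\geq 5$ occur, one picks up a factor $(\an{s}/\an{t})^{|\alpha|/4}\leq (\an{s}/\an{t})^{5/4}$, so that $\|F_L(\phi(s))\|_{X(D\an{t}^{1/4})}\leq K_L\an{t}^{-5/4}\|\phi\|_{X_t(D)}^5$. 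Since $e^{i(t-s)\lap}$ is again a chaoswise isometry, integrating in $s$ and multiplying by $\an{t}^{1/4}$ yields
\[
\an{t}^{1/4}\Big\|\varepsilon\int_0^t e^{i(t-s)\lap}F_L(\phi(s))\,ds\Big\|_{X(D\an{t}^{1/4})}\leq \varepsilon K_L\,|t|\,\an{t}^{-1}\|\phi\|_{X_t(D)}^5\leq \varepsilon K_L\|\phi\|_{X_t(D)}^5,
\]
uniformly in $t$. Thus the nonlinear part maps $X_t(D)$ into itself with a globally bounded, time-uniform norm, which is precisely what allows one to bypass the non-uniform convergence of the naive Picard expansion mentioned in the introduction.

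Combining the two estimates gives $\|\Phi(\phi)\|_{X_t(D)}\leq D_0/D + \varepsilon K_L\|\phi\|_{X_t(D)}^5$, and the analogous telescoping difference estimate gives $\|\Phi(\phi)-\Phi(\psi)\|_{X_t(D)}\leq 5\varepsilon K_L R^4\|\phi-\psi\|_{X_t(D)}$ on the ball of radius $R$ (the five difference terms each having degree $\geq 5$, so the same scaling gain applies). I would then fix $R$ with $5\varepsilon K_L R^4<1$ and choose $D>D_0$ large enough that $D_0/D+\varepsilon K_L R^5\leq R$; this makes $\Phi$ a contraction of the ball of radius $R$ in $X_t(D)$ into itself, producing a unique global solution with $\phi(t)_{(0)}=0$ for all $t$, together with continuity in $\phi_0$. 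The main obstacle, and the only genuinely delicate technical point, is verifying that $F_L$ is a bounded quintic map on $X(D')$ with a constant $K_L$ independent of the radius $D'$: this requires that the conjugations and, above all, the frequency restriction to $C_L(k)$—which couples $k_1,k_2,k_3$ and is therefore not a plain Fourier multiplier on the output frequency—be dominated, for fixed $L$, by the unrestricted Wick product controlled by the algebra property. Once this radius-uniform quintic bound is secured, the scaling gain $(\an{s}/\an{t})^{5/4}$ closes the global argument.
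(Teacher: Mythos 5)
Your proposal is correct and follows essentially the same route as the paper: a contraction argument for the Duhamel map in $X_t(D)$, with the linear part controlled by comparing the radii $D_0$ and $D\an{t}^{1/4}$, and the nonlinearity controlled by the quintic algebra bound at radius $D\an{s}^{1/4}$ followed by exactly the radius-change gain $(\an{s}/\an{t})^{|\alpha|/4}\le(\an{s}/\an{t})^{5/4}$ coming from $|\alpha|\ge 5$, which converts $|t|\an{t}^{1/4}\an{t}^{-5/4}$ into a time-uniform bound. The "delicate point" you flag is resolved in the paper just as you anticipate: the restriction to $C_L(k)$ is simply majorized by the unrestricted convolution sum, and the $H^s$ ($s>1/2$) algebra property with the multinomial weights $\sqrt{\alpha!}/\sqrt{\alpha_1!\cdots\alpha_5!}$ and the bound $C_\alpha\le 4^{|\alpha|}$ gives a quintic constant uniform in the radius and in $L$.
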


\begin{proof} We solve the fix point problem
$$
\phi (t) = A(\phi) := S(t) \phi_0 - i\int_{0}^T S(t-\tau) F_L(\phi(\tau))d\tau
$$
in the ball of $X_t(D)$ of radius $\eta $ for $D$ big enough. Since the linear flow preserves the $H^s$ norm, and since for all $t\in \R$ and $\phi$ such that $\phi_{(0)} = 0$,
\[
\an{t}^{1/4} \|\phi\|_{X(D\an{t}^{1/4}}  = \sup_{\alpha \neq (0)} \frac{\|\phi_\alpha\|_{H^s}}{\sqrt{\alpha!} C_\alpha D^{|\alpha|}\an{t}^{|\alpha|/4}} \leq \|phi\|_{X(D)},
\]
we have
$$
\an{t}^{1/4}\|A(\phi)(t)\|_{X(D\an{t}^{1/4})} \leq \|\phi_0\|_{X(D)} + |t|\an{t}^{1/4}\varepsilon \sup_{\tau \in [0,t]}\|F_L(\phi)(\tau)\|_{X(D \an{t}^{1/4})}.
$$
We have by definition
$$
\hat F_L(\phi)(k) = \frac1{(2\pi L)^2} \sum_{C_L(k)} \hat \phi(k_1) \diamond \overline{\hat \phi(k_2)} \diamond \hat \phi(k_3) \diamond \overline{\hat \phi(k_4)} \diamond \hat \phi(k_5).
$$
where
\[
C_L(k) = \{(k_1,\hdots, k_5) \in \Z/L \;| \;| k_1 - k_2 + k_3 - k_4 + k_5 = k ,\quad |\Delta (\vec k) |\geq \nu^{-1} ,\quad |k_1-k_2+k_3| \geq  \mu^{-1}\}.
\]

Therefore, for $\alpha  \in \N_f ^\Z$ such that $|\alpha|\geq 5$ (for $|\alpha|<5$, $\hat F_L(\phi)_\alpha = 0$),
\begin{multline*}
\hat F_L(\phi)_\alpha (k) = \frac1{(2\pi L)^2} \sum_{\alpha_1+ \alpha_2+ \alpha_3+ \alpha_4+ \alpha_5= \alpha } \frac{\sqrt{\alpha !}}{\sqrt{\alpha_1 ! \alpha_2 ! \alpha_3 ! \alpha_4! \alpha_5 !}}\\ \sum_{C_L(k)} 
\hat \phi_{\alpha_1}(k_1)  \overline{\hat \phi_{\alpha_2}(k_2)}  \hat \phi_{\alpha_3}(k_3)  \overline{\hat \phi_{\alpha_4}(k_4)}  \hat \phi_{\alpha_5}(k_5).
\end{multline*}
We get
\begin{multline*}
|\hat F_L(\phi)_\alpha (k)|\leq 
\frac1{(2\pi L)^2} \sum_{\alpha_1+ \alpha_2+ \alpha_3+ \alpha_4+ \alpha_5= \alpha } \frac{\sqrt{\alpha !}}{\sqrt{\alpha_1 ! \alpha_2 ! \alpha_3 ! \alpha_4! \alpha_5 !}}\\ \sum_{k_1-k_2+k_3-k_4+k_5 = k} 
\Big|\hat \phi_{\alpha_1}(k_1)  \overline{\hat \phi_{\alpha_2}(k_2)}  \hat \phi_{\alpha_3}(k_3)  \overline{\hat \phi_{\alpha_4}(k_4)}  \hat \phi_{\alpha_5}(k_5)\Big|.
\end{multline*}
By convexity of $\an{x}^{2s}$ and Cauchy-Schwarz (and using that $s>1/2$), we get
$$
\| F_L(\phi)_\alpha \|_{H^s}\lesssim   \sum_{\alpha_1+ \alpha_2+ \alpha_3+ \alpha_4+ \alpha_5= \alpha } \frac{\sqrt{\alpha !}}{\sqrt{\alpha_1 ! \alpha_2 ! \alpha_3 ! \alpha_4! \alpha_5 !}} \|\phi_{\alpha_1}\|_{H^s}\|\phi_{\alpha_2}\|_{H^s}\|\phi_{\alpha_3}\|_{H^s}\|\phi_{\alpha_4}\|_{H^s}\|\phi_{\alpha_5}\|_{H^s}.
$$
We have using the norm of $\phi$, for some constant $C_s$ depending on the Sobolev regularity,
$$
\|F_L(\phi)_\alpha(\tau) \|_{H^s}\leq C_s \sqrt{\alpha !}(\an{\tau}^{1/4}D)^{|\alpha|} C_\alpha \|\phi\|_{X(\an{\tau}^{1/4} D)}^5,
$$ 
that is
$$
\|F_L(\phi)_\alpha(\tau) \|_{H^s}\leq C_s \sqrt{\alpha !} (\an{\tau}^{1/4}D)^{|\alpha|} C_\alpha \eta^5 \an{\tau}^{-5/4}.
$$
Note that this $C_s$ does not depend on (or is uniformly bounded in) $L\in \N^*$ as its dependence in $L$ is characterized by the value of
\[
\frac1{L}\sum_{k\in \Z/L} \an{k}^{-2s} \leq \frac1{L} + 2 \int dx \an{x}^{-2s}.
\]

Because $|\alpha| \geq 5$, and $\an \tau \leq \an t$, we have 
$$
\|F_L(\phi)_\alpha(\tau) \|_{H^s}\leq C_s (\an{t}^{1/4}D)^{|\alpha|} C_\alpha \eta^5 \an{t}^{-5/4}.
$$
We deduce 
$$
\|F_L(\phi)(\tau)\|_{X(D\an{t})} \leq C_s \an{t}^{-5/4} \eta^5 .
$$
Taking $\eta$ such that $C_s\eta^4 \varepsilon \leq \frac12$, we get
$$
\|A(\phi)\|_{X_t(D)} \leq \frac{D_0}{D} \|\phi_0\|_{X(D_0)} + \frac12  \eta.
$$
Taking $D$ such that 
$$
\frac{D_0}{D} \leq \frac12 \eta 
$$
we get that the ball of $X_t(D) $ of radius $\eta $ is stable under $A$. A similar argument yields that $A$ is contracting for $\eta$ such that
\[
\tilde C_s \varepsilon \eta^4 \leq \frac12
\]
where $\tilde C_s$ is a constant independent from $L$ but depending on the Sobolev regularity $s$.  
\end{proof}

\section{Trees, description of the solution}\label{sec:trees}

\subsection{Picard expansion}

Let $\varphi$ be a bijection from $\{0,1\}\times \Z$ to $\Z$. We call, by abuse of notation,
$$
\xi_{\iota, k} = \xi_{\varphi(k)}.
$$
We set
$$
g_k = \xi_{0,k} + i\xi_{1,k}.
$$

Let, for all $x\in (L\T)^2$,
$$
a_{L} (x) = \sum_{k\in \Z} g_k \frac{e^{ikx/L}}{\sqrt{2\pi}} a(\frac{k}{L})
$$
where $a$ is a smooth function with compact support. We call
$$
u_{\varepsilon,L} 
$$
the solution to 
$$
i\partial_t u_{\varepsilon,L}  = -\lap u_{\varepsilon,L}   +\varepsilon F_L(u_{\varepsilon,L} )
$$
with initial datum $a_L$ in $X_t(D)$ for some $D$ big enough. 

First, note that 
$$
(a_{L})_\alpha = \left \lbrace{\begin{array}{cc}
\frac{e^{ikx/L}}{\sqrt L} a(k/L) & \textrm{if } |\alpha| = 1 \textrm{ and Supp}(\alpha) = \varphi(0,k)\\
i\frac{e^{ikx/L}}{\sqrt L} a(k/L) & \textrm{if } |\alpha| = 1 \textrm{ and Supp}(\alpha) = \varphi(1,k)\\
0 & \textrm{otherwise.}\end{array}} \right. .
$$
Hence
$$
\|(a_L)_\alpha\|_{H^s} \leq \sup_{x\in \R} \an{x}^s |a(x)|
$$
and therefore $a_{L}$ is in the unit ball of $X(D_0)$ for any $D_0 \geq \|\an{x}^s a\|_{L^\infty}$. Hence $u_{\varepsilon,L}$ is well-defined.

\begin{remark}\label{rem:boundSol} We have that $a_L$ belongs to some $X(D_0)$ with $D_0$ uniformly bounded in $L$. According to the proof of Proposition \ref{prop:wellPosedness}, we have that the solution 
\[
u_{\varepsilon,L}
\]
belongs to $X_t(D)$ with
\[
D = c_s D_0 \varepsilon^{1/4}
\]
where $c_s$ is a constant depending only on the Sobolev regularity $s$ and thus $D$ is uniformly bounded in $L$.

Finally, we have 
\[
\|(u_{\varepsilon,L}(t))_\alpha\|_{H^s(L\T)} \leq \sqrt{\alpha !} C_\alpha D^{|\alpha|}\an{t}^{\frac14(|\alpha| - 1)}
\]
and thus for all $\alpha \in \N_f^{\Z}$ and all $t\in \R$, we have that
\[
\|(u_{\varepsilon,L}(t))_\alpha\|_{H^s(L\T)}
\]
is uniformly bounded in $L$.
\end{remark}
\begin{proposition} The solution $u$ hence defined is, in terms of Kondratiev distributions 
$$
u_{\varepsilon,L} = \sum_{n=0}^\infty \varepsilon^n u_{n,L}
$$
where $u_{0,L}$ is $S(t)a_{L} := e^{it\Delta }a_L$ and the sequence is defined by induction by
$$
i \partial_t u_{n+1,L} = -\lap u_{n+1,L} + \sum_{n_1+n_2+n_3+n_4+n_5 = n}F_L(u_{n_1,L},u_{n_2,L},u_{n_3,L},u_{n_4,L},u_{n_5,L})
$$
with initial datum $0$.

What is more, setting that $P_N$ is the projection over chaos of degree at most $N$, we have 
$$
P_N u_{\varepsilon,L} = \sum_{n=0}^M \varepsilon^n u_{n,L}
$$
with $M = \lfloor \frac{N-1}{4} \rfloor$.
\end{proposition}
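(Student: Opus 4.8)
The plan is to take $(u_{n,L})_n$ as defined by the stated recursion, to prove that $\Phi := \sum_n \varepsilon^n u_{n,L}$ converges in $\mathcal S_{-1}(H^s)$ and solves the Duhamel form of \eqref{improvedCauchyprob}, and then to invoke the uniqueness in Proposition \ref{prop:wellPosedness} to identify $\Phi$ with $u_{\varepsilon,L}$. Everything, including the truncation statement, rests on a rigidity of the Wiener chaos degree, which I would establish first. Namely, I would show by induction on $n$ that $u_{n,L}$ is well-defined in $X_t(D)$ for $D$ large and is supported on the single chaos of degree $4n+1$, i.e. $(u_{n,L})_\alpha = 0$ unless $|\alpha| = 4n+1$. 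The base case holds since $u_{0,L} = S(t)a_L$ and $a_L$ is a combination of the degree-one variables $g_k$. For the step, the Wick product adds chaos degrees exactly, $\xi_\alpha \diamond \xi_\beta = \sqrt{C^\alpha_{\alpha+\beta}}\,\xi_{\alpha+\beta}$ with $|\alpha+\beta|=|\alpha|+|\beta|$, and conjugation preserves the degree, so each $F_L(u_{n_1,L},\dots,u_{n_5,L})$ with $n_1+\dots+n_5=n$ is pure of degree $\sum_{i=1}^5(4n_i+1)=4n+5$; since $S(t-\tau)$ and the coefficientwise Duhamel integral preserve the degree, $u_{n+1,L}$ is pure of degree $4(n+1)+1$.

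For convergence I would reuse the multilinear form of the nonlinear estimate from the proof of Proposition \ref{prop:wellPosedness}: the weights $C_\alpha \leq 4^{|\alpha|}$ make $X(D)$ a Banach algebra for $\diamond$ and give $\|F_L(\phi_1,\dots,\phi_5)(\tau)\|_{X(\an{\tau}^{1/4}D)} \lesssim \an{\tau}^{-5/4}\prod_{i=1}^5\|\phi_i(\tau)\|_{X(\an{\tau}^{1/4}D)}$. Feeding this into the Duhamel formula for $u_{n+1,L}$ and summing the multinomial factors from $n_1+\dots+n_5=n$, an induction yields a geometric bound $\|u_{n,L}\|_{X_t(D')} \leq A\,B^n$. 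As the $\varepsilon^n u_{n,L}$ occupy disjoint chaos degrees, transferring these sup-type bounds to a fixed Kondratiev space $S_{-1,-l}(H^s)$ with $l$ large — exactly as in the completeness argument for $X(D)$, where $\sum_\alpha \frac{q^{-l\alpha}}{\alpha!}\|\phi_\alpha\|_{H^s}^2 \leq \sum_\alpha (4D')^{2|\alpha|}q^{-l\alpha}\|\phi\|_{X(D')}^2 < \infty$ — makes the series summable, so $\Phi \in X_t(D') \subseteq \mathcal S_{-1}(H^s)$.

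Finally I would check that $\Phi$ solves $\Phi = S(t)a_L - i\varepsilon\int_0^t S(t-\tau)F_L(\Phi)\,d\tau$. By continuity and five-linearity of $F_L$ for $\diamond$, together with the absolute convergence just obtained, the series can be substituted and regrouped by total degree, $F_L(\Phi,\dots,\Phi) = \sum_n \varepsilon^n \sum_{n_1+\dots+n_5=n} F_L(u_{n_1,L},\dots,u_{n_5,L})$, and this is term by term the source whose Duhamel form defines $u_{n+1,L}$; hence $\Phi$ solves the fixed-point equation with datum $a_L$ in $X_t(D')$, and Proposition \ref{prop:wellPosedness} gives $\Phi = u_{\varepsilon,L}$. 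The truncation is then immediate from the degree rigidity: $\varepsilon^n u_{n,L}$ being pure of degree $4n+1$, one has $P_N(\varepsilon^n u_{n,L}) = \varepsilon^n u_{n,L}$ when $4n+1 \leq N$, i.e. $n \leq \lfloor\frac{N-1}{4}\rfloor = M$, and $P_N(\varepsilon^n u_{n,L})=0$ otherwise, whence $P_N u_{\varepsilon,L} = \sum_{n=0}^M \varepsilon^n u_{n,L}$.

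The step I expect to be the main obstacle is the convergence argument: passing from the supremum-type $X(D)$ bounds to genuine summability in a single Kondratiev space, with enough uniformity to justify exchanging $F_L$ with the infinite sum in the identification step. The chaos-degree rigidity established at the outset is exactly what keeps this bookkeeping tractable and renders the truncation cost-free.
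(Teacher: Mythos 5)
Your proposal is correct and follows essentially the same route as the paper: an induction establishing that $u_{n,L}$ is a pure Wiener chaos of degree $4n+1$, convergence of the series in $X_t(D)$ via the multilinear estimate from the well-posedness proof, identification with $u_{\varepsilon,L}$ through the Duhamel equation and uniqueness, and the truncation read off from the degree purity. The only cosmetic difference is that the paper closes the induction with a bound uniform in $n$ (by enlarging $D$) rather than a geometric one, which is an equivalent bookkeeping choice.
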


\begin{proof} We prove that the series 
$$
\sum_{n=0}^\infty \varepsilon^n u_{n,L}
$$
converges in $X_t(D)$ for $D$ big enough.

We prove that $u_{n,L}$ is either $0$ or a Wiener chaos of exact degree $4n+1$ (by which we mean a sum of monomials of exact degree $4n+1$) and that there exists $D$ big enough  and $\eta $ small enough such that $\|u_{n,L}\|_{X_t(D)} \leq \eta \leq 1$. This is true for $u_{0,L}$, we have for any $\eta > 0$,
$$
\|u_{0,L}\|_{X_t(\frac{\|\an{x}^sa\|_{L^\infty} }{\eta})} \leq \eta.
$$ 
We prove that this is true for any $n$ by induction.

We have 
$$
u_{n+1,L} (t)= -i \int_{0}^t S(t-\tau) \sum_{n_1+n_2+n_3+n_4+n_5 = n}F_L(u_{n_1}, u_{n_2}, u_{n_3}, u_{n_4}, u_{n_5})
$$
where we recall that $S(t)$ is the linear flow $e^{it\Delta}$. Since $u_{n_j}$ is of exact degree $4n_j + 1$, we get that
$$
F_L(u_{n_1}, u_{n_2}, u_{n_3}, u_{n_4}, u_{n_5})
$$
is of degree $4(n_1+n_2+n_3+n_4+n_5) + 5 = 4n + 5 = 4(n+1)+1$ or null. Besides, for any $\alpha$, we have 
$$
\|u_{n+1,L}(t)_\alpha\|_{H^s} \leq  C(\alpha !)^{1/2} |t| \sum_{n_1+n_2+n_3+n_4+n_5 = n} \sum_{\alpha_1+ \alpha_2+\alpha_3 + \alpha_4 + \alpha_5 = \alpha}\sup_{\tau \leq t}\prod_j\|u_{n_j}(\tau)_{\alpha_j}\|_{H^s} (\alpha_j !)^{-1/2}.
$$
Using that
$$
\|u_{n_j}(\tau)_{\alpha_j}\|_{H^s} (\alpha_j !)^{-1/2}
$$
is $0$ if $|\alpha_j| \neq 4 n_j + 1$ and is less that
$$
D^{4n_j+1} \an{\tau}^{n_j} C_{\alpha_j} \sqrt{\alpha_j !} \nu
$$
we get, if $|\alpha| = 4n+5$, 
$$
\|u_{n+1,L}(t)_\alpha\|_{H^s} \leq  C\nu^5(\alpha !)^{1/2} |t|  \sum_{\alpha_1+ \alpha_2+\alpha_3 + \alpha_4 + \alpha_5 = \alpha}\prod_j C_{\alpha_j} D^{4(n+1) + 1} \an{t}^{n} = C\nu^5 C_\alpha D^{4(n+1) + 1} \an{t}^{n+1}
$$
and is $0$ if $|\alpha| \neq 4n+5$.

Hence
$$
\|u_{n+1,L}\|_{X_t(D)} \leq C \nu^5.
$$
For $\nu$ small enough such that $C\nu^4 \leq 1$, we have indeed $\|u_{n+1,L}\|_{X_t(D)} \leq \nu$. Therefore, since $\nu \leq 1$ and $\varepsilon< 1$, the series 
$$
\sum_n \varepsilon u_{n,L}
$$
converges in $X_t(D)$ for $D = \frac{\|\an{x}^s a\|_{L^\infty}}{\nu}$.

Since this series satisfies equation
$$
i\partial_t u = -\lap u + \varepsilon F_L(u)
$$
with initial datum $u_{0,L}$, we have 
$$
u_{\varepsilon,L} = \sum_n \varepsilon^n u_{n,L}.
$$
What is more,
$$
P_N u_{\varepsilon,L} = \sum_n \varepsilon^n P_N u_{n,L}
$$
and because $u_{n,L}$ is of degree $4n+1$, we get $P_N u_{n,L} = u_{n,L}$ if $4n+1 \leq N$ and is $0$ otherwise.
\end{proof}

We set 
$$
\hat u_{n,L} (k,t) = \frac1{2\pi L}\int_{\T L} u_{n,L}(x,t) e^{-ikx}dx.
$$
We get that $\hat u_{n,L}(k)$ satisfies for $n\geq 1$,
\begin{multline*}
i\partial_t \hat u_{n,L} = 
k^2 \hat u_{n,L}(k) + 
\frac1{(2\pi L)^2} \sum_{n_1+n_2+n_3+n_4+n_5=n-1} \\
\sum_{C_L(k)} \hat u_{n_1,L}(k_1) \diamond \overline{\hat u_{n_2,L}(k_2)}\diamond \hat u_{n_3,L}(k_3) \diamond \overline{\hat u_{n_4,L}(k_4)}\diamond\hat u_{n_5,L}(k_5).
\end{multline*}
We set $v_{n,L}(k,t) = e^{ik^2t} \hat u_{n,L}(k,t)$ and get the equation
$$
v_{0,L}(k,t) = a(k)
$$
and 
\begin{multline}\label{eqsurvnL}
i\partial_t v_{n,L}(k) =  \frac1{(2\pi L)^2} \sum_{n_1+n_2+n_3+n_4+n_5=n-1} \\
\sum_{C_L(k)}e^{i\Delta(\vec k) t} v_{n_1,L}(k_1) \diamond \overline{v_{n_2,L}(k_2)}\diamond v_{n_3,L}(k_3) \diamond \overline{v_{n_4,L}(k_4)}\diamond v_{n_5,L}(k_5).
\end{multline}

\subsection{Quintic trees}

All the definitions in this subsection and the next one have examples in Appendix \ref{app:gloss}.

\begin{definition}\label{def:labelledTrees} Let $k\in \Z/L$. We define by induction the set $\mathcal T_n [k]$ of quintic trees with $n$ nodes by 
$$
\mathcal T_0 [k] = \{ (k)\}
$$
and 
\begin{multline*}
\mathcal T_{n+1}[k] = \{(T_1,T_2,T_3,T_4,T_5,k) \; | \;  \forall j=1, \hdots, 5, T_j \in \mathcal T_{n_j}[k_j] \textrm{ with }\\
   \sum n_j = n , (k_1,\hdots, k_5) \in C_L(k) \}.
\end{multline*}
We set
$$
l\mathcal T_n = \bigcup_{k\in \Z/L} \mathcal T_n[k].
$$
\end{definition}

\begin{definition}\label{def:functionsTrees} For any tree we define the following functions or random variables. First, we set
$$
F_{(k)}(t) = 1, g_{(k)} = g_{Lk}, A_{(k)} = a(k),  
$$
and if $T = (T_1,T_2,T_3,T_4,T_5,k)$ with $T_j \in \mathcal T_{n_j}[k_j]$, we set
$$
F_T (t) = -i\int_{0}^t e^{i\Delta \tau} F_{T_1}(\tau) \overline{F_{T_2}(\tau)} F_{T_3}(\tau) \overline{F_{T_4}(\tau)}F_{T_5}(\tau) d\tau
$$
and 
$$
g_T = g_{T_1}\diamond \overline{g_{T_2}} \diamond g_{T_3} \diamond \overline{g_{T_4 }} \diamond g_{T_5},
A_T = A_{T_1} \overline{A_{T_2}} A_{T_3} \overline{A_{T_4}}A_{T_5} .
$$
\end{definition}

\begin{definition}\label{def:labelsTrees} Finally, we define the labels $\vec T$ of the leaves of a tree $T$ in the following way
$$
\vec{(k)} = k \in \Z/L
$$
and for $T = (T_1,T_2,T_3,T_4,T_5,k)$ writing $(k_{j,1},\hdots,k_{j,4n_j+1}) = \vec T_j$ with $T_j \in \mathcal T_{n_j}[l_j]$, we set
$$
\vec T = (k_{1,1}, \hdots, k_{1,4n_1+1} ,  k_{2,1}, \hdots, k_{2,4n_2+1} ,  k_{3,1}, \hdots, k_{3,4n_3+1} ,  k_{4,1}, \hdots, k_{4,4n_4+1} ,  k_{5,1}, \hdots, k_{5,4n_5+1} ).
$$
\end{definition}

\begin{remark} The definition is consistent as $\vec T \in \R^{4n+1}$ for any $T \in l\mathcal T_n$. \end{remark}

\begin{proposition}\label{prop:descripAg} Let $T \in \mathcal T_n[k]$ and $\vec T = (k_1,\hdots, k_{4n+1})$, we have
$$
k = \sum_{j=1}^{4n+1} (-1)^{j+1} k_j, A_T = \prod_{j=1}^{4n+1} a(k_j,(-1)^{j+1}), g_T = \underset{j=1}{\overset{4n+1}{\Diamond}} g_{k_j,(-1)^{j+1}} .
$$
We used the notation
\begin{eqnarray*}
a(k_j,1) = a(k_j) & \textrm{ and } & a(k_j,-1) = \overline{a(k_j)}\\
g_{k_j,1} = g_{k_jL} & \textrm{ and } & g_{k_j,-1} = \overline{g_{k_jL}}
\end{eqnarray*}
\end{proposition}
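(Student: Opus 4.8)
The plan is to prove the three identities simultaneously by induction on the number of nodes $n$. The base case $n=0$ is immediate: a tree $T=(k)$ has the single leaf $\vec T = k$, so $k_1 = k$, and the three claims reduce to unwinding Definition \ref{def:functionsTrees}, namely $k = (-1)^{1+1}k_1$, $A_{(k)} = a(k) = a(k_1,(-1)^{1+1})$ and $g_{(k)} = g_{Lk} = g_{k_1,(-1)^{1+1}}$.

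For the inductive step I would write $T = (T_1,\dots,T_5,k)$ with $T_j \in \mathcal T_{n_j}[l_j]$, $\sum_j n_j = n$, and $(l_1,\dots,l_5)\in C_L(k)$, so that in particular $l_1 - l_2 + l_3 - l_4 + l_5 = k$. Let $\vec{T_j} = (k_{j,1},\dots,k_{j,4n_j+1})$, so that by Definition \ref{def:labelsTrees} the tuple $\vec T$ is their concatenation and has $\sum_j(4n_j+1) = 4(n+1)+1$ entries. The induction hypothesis applied to each $T_j$ reads
$$
l_j = \sum_{i=1}^{4n_j+1}(-1)^{i+1}k_{j,i}, \qquad A_{T_j} = \prod_{i=1}^{4n_j+1} a(k_{j,i},(-1)^{i+1}), \qquad g_{T_j} = \underset{i=1}{\overset{4n_j+1}{\Diamond}}\, g_{k_{j,i},(-1)^{i+1}}.
$$

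The crux is a parity computation. Setting $P_0 = 0$ and $P_j = \sum_{m\le j}(4n_m+1)$, the $i$-th leaf of $T_j$ is the leaf of $T$ in global position $p = P_{j-1}+i$. Because every block has odd length, $P_{j-1} = 4\sum_{m<j}n_m + (j-1) \equiv j-1 \pmod 2$, hence $(-1)^{P_{j-1}} = (-1)^{j+1}$, which is exactly the sign carried by the $j$-th slot of the quintic vertex (the pattern $+,-,+,-,+$). I would then read off each identity from this fact. For the momentum relation, the contribution of block $j$ to the global alternating sum is $\sum_i (-1)^{P_{j-1}+i+1}k_{j,i} = (-1)^{j+1}\sum_i(-1)^{i+1}k_{j,i} = (-1)^{j+1}l_j$, and summing over $j$ gives $l_1 - l_2 + l_3 - l_4 + l_5 = k$. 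For $A_T$ I use $\overline{a(k,\sigma)} = a(k,-\sigma)$: combined with $(-1)^{P_{j-1}} = (-1)^{j+1}$, the factor coming from slot $j$ — conjugated exactly when $j$ is even, as prescribed by $A_T = A_{T_1}\overline{A_{T_2}}A_{T_3}\overline{A_{T_4}}A_{T_5}$ — produces precisely the factors $a(k_p,(-1)^{p+1})$ with $p = P_{j-1}+i$, so the full product telescopes into $\prod_{p=1}^{4(n+1)+1}a(k_p,(-1)^{p+1})$. The identity for $g_T$ follows verbatim, using that the Wick product is commutative and associative and that conjugation distributes over it, $\overline{\phi\diamond\psi} = \bar\phi\diamond\bar\psi$, so that $\overline{g_{T_j}} = \underset{i}{\Diamond}\, g_{k_{j,i},(-1)^{i}}$.

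The only genuinely delicate point is the parity identity $(-1)^{P_{j-1}} = (-1)^{j+1}$, which is what makes the single alternating vertex propagate consistently down to the leaves; it holds precisely because each subtree carries an odd number $4n_j+1$ of leaves, so that the cumulative offset of block $j$ has the same parity as the sign of the $j$-th slot. Everything else is a mechanical unrolling of the recursive definitions of $A_T$, $g_T$ and $\vec T$.
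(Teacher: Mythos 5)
Your proof is correct and follows the same route as the paper, which simply states that the result holds by induction on the tree structure; you have filled in the details, in particular the parity bookkeeping $(-1)^{P_{j-1}}=(-1)^{j+1}$ coming from the odd block lengths $4n_j+1$, which is indeed the only point that requires any care.
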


\begin{proof} By induction starting from Equation \eqref{eqsurvnL}. \end{proof}

\begin{proposition}\label{prop:descripvn1} We have for all $(n,k,t) \in \N\times \Z/L \times \R$,
$$
v_{n,L} (k,t) = \Big[ \frac1{(2\pi L)^2} \Big]^n \sum_{T \in \mathcal T_n[k]} F_T(t) A_T g_T.
$$
\end{proposition}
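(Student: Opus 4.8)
The plan is to argue by strong induction on $n$, feeding the recursion \eqref{eqsurvnL} for $v_{n,L}$ into the inductive definition of the tree set $\mathcal T_n[k]$ (Definition \ref{def:labelledTrees}) and of the tree quantities $F_T$, $A_T$, $g_T$ (Definition \ref{def:functionsTrees}). The whole statement is an algebraic matching of two inductively defined objects, so the work is in checking that every piece of \eqref{eqsurvnL} lands on its counterpart in Definition \ref{def:functionsTrees}. For the base case $n=0$, Definition \ref{def:labelledTrees} gives $\mathcal T_0[k]=\{(k)\}$, the prefactor $[\tfrac1{(2\pi L)^2}]^0$ equals $1$, and $F_{(k)}=1$, $A_{(k)}=a(k)$, $g_{(k)}=g_{Lk}$; the resulting product $F_{(k)}A_{(k)}g_{(k)}$ reproduces $v_{0,L}(k,t)$, the (time-independent) Fourier--Gaussian datum read off from $u_{0,L}=e^{it\Delta}a_L$.

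For the inductive step I would assume the formula for every index $m<n$. Since $v_{n,L}(k,0)=0$ for $n\geq1$, integrating \eqref{eqsurvnL} in time yields the Duhamel identity
\begin{multline*}
v_{n,L}(k,t)=-i\,\frac1{(2\pi L)^2}\sum_{n_1+\dots+n_5=n-1}\ \sum_{C_L(k)}\int_0^t e^{i\Delta(\vec k)\tau}\\
v_{n_1,L}(k_1,\tau)\diamond\overline{v_{n_2,L}(k_2,\tau)}\diamond\dots\diamond v_{n_5,L}(k_5,\tau)\,d\tau.
\end{multline*}
Into each of the five factors I substitute the induction hypothesis. Because $F_{T_j}(\tau)$ and $A_{T_j}$ are scalars and $\diamond$ is multilinear over scalars (read off from the chaos formula $(\phi\diamond\psi)_\alpha=\sum\sqrt{C^{\alpha_1}_\alpha}\phi_{\alpha_1}\psi_{\alpha_2}$), these coefficients pull out of the Wick product, leaving the purely Gaussian factors to assemble as $g_{T_1}\diamond\overline{g_{T_2}}\diamond g_{T_3}\diamond\overline{g_{T_4}}\diamond g_{T_5}=g_{(T_1,\dots,T_5,k)}$. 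The conjugations on the second and fourth slots turn $F_{T_2},A_{T_2},g_{T_2}$ and $F_{T_4},A_{T_4},g_{T_4}$ into their conjugates, matching the conjugation pattern in Definition \ref{def:functionsTrees}. The scalar prefactors combine as $\tfrac1{(2\pi L)^2}\prod_j[\tfrac1{(2\pi L)^2}]^{n_j}=[\tfrac1{(2\pi L)^2}]^{n}$, and the time integral $-i\int_0^t e^{i\Delta(\vec k)\tau}F_{T_1}\overline{F_{T_2}}F_{T_3}\overline{F_{T_4}}F_{T_5}\,d\tau$ is exactly $F_{(T_1,\dots,T_5,k)}(t)$.

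The last step is the reindexing: the outer data $(n_1,\dots,n_5)$ with $\sum n_j=n-1$ and $(k_1,\dots,k_5)\in C_L(k)$, together with the inner choices $T_j\in\mathcal T_{n_j}[k_j]$, range over precisely the description of $\mathcal T_n[k]$ in Definition \ref{def:labelledTrees}, so the nested sum collapses to a single sum over $T\in\mathcal T_n[k]$, giving the claim. The identity $k=\sum(-1)^{j+1}k_j$ and the product formulas of Proposition \ref{prop:descripAg} are exactly what guarantees the phase $\Delta(\vec k)$ and the coefficient $A_T$ are those attached to the assembled tree. The one genuinely delicate point is the legitimacy of interchanging the a priori infinite sum over $C_L(k)$ with the time integral and with the substitution of chaos expansions inside the $\diamond$-products; this is where I would use that $a$ is compactly supported, so $A_T=0$ unless every leaf label of $T$ lies in $\operatorname{supp} a$. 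For fixed $n$ this leaves only finitely many nonzero trees, and each term converges in $X_t(D)$ by the previous proposition, so all rearrangements are justified.
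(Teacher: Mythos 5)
Your proof is correct and follows exactly the route the paper intends: the paper's own proof of this proposition is simply ``by induction'' on $n$ starting from Equation \eqref{eqsurvnL}, which is precisely the argument you carry out in detail (base case matching $F_{(k)}A_{(k)}g_{(k)}=a(k)g_{Lk}$, Duhamel step matching Definitions \ref{def:labelledTrees} and \ref{def:functionsTrees}, and finiteness of the sums from the compact support of $a$). No discrepancy to report.
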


\begin{proof} By induction. \end{proof}

We introduce unlabelled quintic trees.

\begin{definition}\label{def:unlabelledtree} Let $\mathcal T_n$ defined by induction by
$$
\mathcal T_0 = \{\bot \}
$$
and 
$$
\mathcal T_{n+1} = \{(T_1,T_2,T_3,T_4,T_5) | \forall j=1,\hdots , 5, T_j \in \mathcal T_{n_j} \sum_{j=1}^5 n_j = n\}.
$$
Given $\vec k = (k_1,\hdots, k_{4n+1}) \in (\Z/L)^{4n+1}$, we write 
$$
\bot (\vec k) = (k_1)
$$
and 
$$
(T_1,T_2,T_3,T_4,T_5) (\vec k) = (T_1(\vec k_1),T_2(\vec k_2),T_3(\vec k_3), T_4(\vec k_4)  , T_5(\vec k_5),k)
$$
with $T_j \in \mathcal T_{n_j}$ and $\vec k_j = (k_{\tilde n_j + 1}, \hdots, k_{\tilde n_j + 4n_j + 1})$ with 
$$
\tilde n_j = \sum_{l=1}^{j-1} (4n_l +1) \quad \textrm{and}\quad k = -\sum_{j=1}^{4n+1} (-1)^j k_n.
$$
\end{definition}

We now give a definition of labels for the nodes and the leaves that helps seeing the "history" of the tree, as the paternity of nodes. 

\begin{definition}\label{def:SetOrder} For any tree $T\in \bigcup_n  \mathcal T_n$ we define by induction
$$
N(\bot) = \emptyset, \tilde N(\bot) = \{0\}
$$
and if $T = (T_1,T_2,T_3,T_4,T_5)$, we write
$$
N(T)= \{0\}\sqcup ( \{1\}\times N(T_1) )\sqcup (\{2\}\times N(T_2) )\sqcup (\{3\}\times N(T_3)) \sqcup (\{4\}\times N(T_4)) \sqcup (\{5\}\times N(T_5))
$$
and
$$
 \tilde N(T)= \{0\}\sqcup (\{1\}\times \tilde N(T_1)) \sqcup (\{2\}\times \tilde N(T_2) )\sqcup (\{3\}\times \tilde N(T_3)) \sqcup (\{4\}\times \tilde N(T_4)) \sqcup (\{5\}  \times \tilde N(T_5)).
$$
\end{definition}

\begin{definition}\label{def:functionLabels} Let $\vec k = (k_1,\hdots,k_{4n+1}) \in (\Z/L)^{4n+1}$. We define 
$$
k_{T,\vec k} : \tilde N(T) \rightarrow \Z/L
$$
such that $k_{\bot,\vec k} (0) = k_1$ and if $T = (T_1,T_2,T_3,T_4,T_5)$ with $T_j \in \mathcal T_{n_j}$ and $\vec k_j = (k_{\tilde n_j + 1}, \hdots, k_{\tilde n_j + 4n_j + 1})$ with 
$$
\tilde n_j = \sum_{l=1}^{j-1} (4n_l +1)
$$
we define 
$$
k_{T,\vec k} (0)= \sum_{j=1}^{4n+1} (-1)^{j+1} k_j \quad \textrm{and} \quad k_{T,\vec k}((j,m)) = k_{T_j,\vec k_j} (m).
$$  

We define also
$$
\Omega_{T,\vec k} : N(T) \rightarrow \R
$$
such that 
$$
\Omega_{T,\vec k}(0) = k^2_{T,\vec k}(0) - k^2_{T,\vec k}((1,0)) + k^2_{T,\vec k}((2,0)) - k^2_{T,\vec k}((3,0)) + k^2_{T,\vec k}((4,0)) - k^2_{T,\vec k}((5,0))
$$ 
and 
$$
\Omega_T ((j,l)) = (-1)^{j+1} \Omega_{T_j} (l).
$$
\end{definition}

We use the notation $\Omega$ instead of $\Delta$ because of the discussion of the sign.

Finally, we define a condition on $\vec k$ such that $T(\vec k) \in \mathcal T_n[k].$

\begin{definition} We define by induction $C_T(k)$ by
$$
C_{\bot} (k) = \{k\}
$$
and $\vec k \in C_T(k)$ iff
$$
k_{T,\vec k}(0) = k, \quad |k_{T,\vec k}(1,0) - k_{T,\vec k}(2,0) + k_{T,\vec k}(3,0) - k_{T,\vec k}(0)|\geq \mu^{-1} , 
$$
and
$$
|\Omega_{T,\vec k}(0)| \geq \nu^{-1},\quad \forall j=1,\hdots, 5, \vec k_j  \in C_{T_j}(k_{T,\vec k}(j,0))
$$
with $T = (T_1,\hdots,T_5)$, $T_j \in \mathcal T_{n_j}$ and 
$$
\vec k_j = (k_{\tilde n_j + 1,\hdots, k_{\tilde n_j+ 4n_j+1}})
$$
with $\tilde n_j = \sum_{l<j} (4n_l+1)$.  
\end{definition}

\begin{proposition}\label{prop:descripTnk} We have 
$$
\mathcal T_n[k] = \{T(\vec k) | T\in \mathcal T_n \textrm{ and } \vec k \in C_T(k)\}.
$$
\end{proposition}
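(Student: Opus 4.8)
The plan is to establish the equality by induction on $n$, exploiting that the two sides are generated by the same five-fold recursion: the labelled trees $\mathcal T_n[k]$ are built node by node with the constraint $C_L$ imposed at each node, while a pair $(T,\vec k)$ with $\vec k \in C_T(k)$ encodes exactly the same data through the labelling maps of Definition \ref{def:functionLabels}. The base case $n=0$ is immediate: $\mathcal T_0[k] = \{(k)\}$ by Definition \ref{def:labelledTrees}, and on the right-hand side the only unlabelled tree is $\bot$, with $C_\bot(k) = \{k\}$ and $\bot(k) = (k)$, so both sides equal $\{(k)\}$.

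For the inductive step I would assume the statement for every $m \leq n$ and every $k$, and first prove the forward inclusion. Take $S \in \mathcal T_{n+1}[k]$, so $S = (S_1, \ldots, S_5, k)$ with $S_j \in \mathcal T_{n_j}[k_j]$, $\sum_j n_j = n$, and $(k_1, \ldots, k_5) \in C_L(k)$. The induction hypothesis furnishes $T_j \in \mathcal T_{n_j}$ and $\vec k_j \in C_{T_j}(k_j)$ with $S_j = T_j(\vec k_j)$. I set $T = (T_1, \ldots, T_5) \in \mathcal T_{n+1}$ and let $\vec k$ be the concatenation of the $\vec k_j$ at the offsets $\tilde n_j$ of Definition \ref{def:unlabelledtree}. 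The two things to check are $T(\vec k) = S$ and $\vec k \in C_T(k)$. The first reduces, via the recursive definition of $T(\vec k)$ and the inductive identification of the subtrees, to the root label: Proposition \ref{prop:descripAg} applied to each $S_j$ gives $k_j = \sum_i (-1)^{i+1}(\vec k_j)_i$, so the alternating sum of all leaf labels equals $k_1 - k_2 + k_3 - k_4 + k_5 = k$ by the momentum constraint in $C_L(k)$. The second follows because Definition \ref{def:functionLabels} yields $k_{T,\vec k}(j,0) = k_j$ and $\Omega_{T,\vec k}(0) = k^2 - k_1^2 + k_2^2 - k_3^2 + k_4^2 - k_5^2 = \Delta(\vec k)$, so the root-level conditions $|k_{T,\vec k}(1,0) - k_{T,\vec k}(2,0) + k_{T,\vec k}(3,0) - k_{T,\vec k}(0)| \geq \mu^{-1}$ and $|\Omega_{T,\vec k}(0)| \geq \nu^{-1}$ coincide with the $\mu$ and $\nu$ constraints in $C_L(k)$, while the subtree conditions $\vec k_j \in C_{T_j}(k_j)$ hold by construction.

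The reverse inclusion is the same chain of equivalences read backwards. Given $T = (T_1, \ldots, T_5) \in \mathcal T_{n+1}$ and $\vec k \in C_T(k)$, I split $\vec k = (\vec k_1, \ldots, \vec k_5)$ at the offsets $\tilde n_j$. The definition of $C_T(k)$ delivers $\vec k_j \in C_{T_j}(k_j)$ with $k_j = k_{T,\vec k}(j,0)$, so by induction $T_j(\vec k_j) \in \mathcal T_{n_j}[k_j]$; and the root-level conditions of $C_T(k)$ translate, through the same identities, into $(k_1, \ldots, k_5) \in C_L(k)$. Hence $T(\vec k) = (T_1(\vec k_1), \ldots, T_5(\vec k_5), k)$ lies in $\mathcal T_{n+1}[k]$, which closes the induction.

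The only genuine difficulty is bookkeeping rather than analysis. One must confirm that the decomposition of $\vec k$ into subtree leaf-vectors at the offsets $\tilde n_j$ agrees in the definitions of $T(\vec k)$ and of $C_T(k)$, and, crucially, that because each subtree carries an odd number $4n_j+1$ of leaves, the starting signs of the five blocks in the global alternating sum alternate as $+,-,+,-,+$, so that this sum collapses to $k_1 - k_2 + k_3 - k_4 + k_5$. This parity fact is what makes the root identity $k_{T,\vec k}(0) = k_1 - k_2 + k_3 - k_4 + k_5$ consistent with the leaf-level formula of Proposition \ref{prop:descripAg}, and it propagates through the recursion together with the sign rule $\Omega_{T,\vec k}((j,l)) = (-1)^{j+1}\Omega_{T_j}(l)$. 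Once these identifications are in place, Proposition \ref{prop:descripAg} closes the momentum conservation and the recursion matches term by term, requiring no further input.
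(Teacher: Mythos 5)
Your proof is correct and follows the same route the paper intends: the paper's proof is the one-line remark ``By induction using Definition \ref{def:unlabelledtree}'', and your argument is exactly that induction carried out in full, with the forward and reverse inclusions and the sign/offset bookkeeping (in particular the parity observation that each block of $4n_j+1$ leaves starts with sign $(-1)^{j+1}$) correctly verified. Nothing further is needed.
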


\begin{proof} By induction using Definition \ref{def:unlabelledtree}.\end{proof}

\begin{proposition}\label{prop:descripvn2} We have for all $(n,k,t) \in \N\times \Z/L \times \R$,
$$
v_{n,L} (k,t) = \Big[ \frac1{(2\pi L)^2} \Big]^n \sum_{T \in \mathcal T_n} \sum_{\vec k \in C_T(k)} F_{T(\vec k)}(t) A_{\vec k} g_{\vec k}
$$
where 
$$
A_{\vec k} = \prod_{j=1}^{4n+1} a(k_j,(-1)^{j+1}) \quad \textrm{and} \quad g_{\vec k} = \underset{j=1}{\overset{4n+1}{\Diamond}} g_{k_j,(-1)^{j+1}}.
$$
\end{proposition}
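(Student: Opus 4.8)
The plan is to start from the description of $v_{n,L}$ as a sum over labelled trees given in Proposition \ref{prop:descripvn1}, and to re-index that sum using the parametrization of $\mathcal T_n[k]$ by unlabelled trees together with admissible labels provided by Proposition \ref{prop:descripTnk}. Concretely, Proposition \ref{prop:descripvn1} gives
$$
v_{n,L}(k,t) = \Big[\frac1{(2\pi L)^2}\Big]^n \sum_{S\in \mathcal T_n[k]} F_S(t)\, A_S\, g_S,
$$
the sum running over labelled trees $S$, and Proposition \ref{prop:descripTnk} asserts that every such $S$ is of the form $S = T(\vec k)$ for some $T \in \mathcal T_n$ and some $\vec k \in C_T(k)$. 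So the remaining work is to rewrite the single sum over $\mathcal T_n[k]$ as the double sum $\sum_{T\in\mathcal T_n}\sum_{\vec k\in C_T(k)}$ and to identify each summand.

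The key point to verify is that the map $(T,\vec k) \mapsto T(\vec k)$ is a \emph{bijection} from $\{(T,\vec k)\mid T\in\mathcal T_n,\ \vec k\in C_T(k)\}$ onto $\mathcal T_n[k]$, so that passing to the double sum introduces no overcounting. Proposition \ref{prop:descripTnk} already yields surjectivity; for injectivity I would argue by induction on $n$. From $S=T(\vec k)$ one recovers the shape $T$ by forgetting all labels and keeping only the branching structure of Definition \ref{def:unlabelledtree}, and one recovers $\vec k$ as the ordered tuple of leaf labels $\vec S$ of Definition \ref{def:labelsTrees}. At the inductive step, writing $S=(S_1,\dots,S_5,k)$ with $S_j=T_j(\vec k_j)$, the splitting of $\vec k$ into the blocks $\vec k_1,\dots,\vec k_5$ is forced by the sizes $4n_j+1$ recorded in Definition \ref{def:unlabelledtree}, so that each $(T_j,\vec k_j)$ is determined by $S_j$ and the induction closes.

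It then remains to match the summands. For $S = T(\vec k)$ a short induction on the recursive Definitions \ref{def:unlabelledtree} and \ref{def:labelsTrees} gives $\vec S = \vec k$, whence $F_S = F_{T(\vec k)}$ trivially, while Proposition \ref{prop:descripAg} expresses $A_S$ and $g_S$ purely through the leaf labels $\vec S = \vec k$ as $\prod_{j} a(k_j,(-1)^{j+1})$ and $\Diamond_{j} g_{k_j,(-1)^{j+1}}$, which are exactly the quantities $A_{\vec k}$ and $g_{\vec k}$ of the statement. Substituting these identifications into the re-indexed sum produces the claimed formula. I expect the only genuine subtlety to be the injectivity of the parametrization, that is, checking that distinct admissible labellings of (possibly distinct) shapes never collapse to the same labelled tree; once this is in place, the proposition follows by direct substitution of Propositions \ref{prop:descripvn1}, \ref{prop:descripTnk} and \ref{prop:descripAg}.
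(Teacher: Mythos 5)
Your proposal is correct and follows exactly the route the paper takes: its proof of Proposition \ref{prop:descripvn2} is stated as a direct consequence of Propositions \ref{prop:descripvn1}, \ref{prop:descripTnk} and \ref{prop:descripAg}, which is precisely the substitution and re-indexing you carry out. The injectivity of $(T,\vec k)\mapsto T(\vec k)$ that you single out is indeed the only point the paper leaves implicit, and your argument for it (recovering the shape from the nesting structure and the labels from the leaf tuple) is sound.
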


\begin{proof} Direct consequence of Propositions \ref{prop:descripvn1}, \ref{prop:descripTnk}, and \ref{prop:descripAg}.
\end{proof}

\subsection{Ordered trees}

\begin{definition}\label{def:orderonnodes}
We define on $N(T)$ the partial order relation $R_T$ such that for all $l\in N(T)$,
$$
j\;R_T\; 0
$$
for all $l_1 = (j_1,m_1) \in \{j_1\}\times N(T_{j_1}) $ and $l_2 = (j_2,m_2) \in \{j_2\}\times N(T_{j_2})$, we have 
$$
l_1 R_T l_2 \Leftrightarrow j_1 = j_2 \textrm{ and } m_1 R_{T_{j_1}} m_2.
$$
\end{definition}

\begin{remark} The partial order $R_T$ represents parenthood in the tree. \end{remark}

\begin{proposition}\label{prop:ordreetFT} The cardinal of $N(T)$ is $n$. We have 
$$
F_{T(\vec k)}(t) = (-i)^{N_T}\int_{I_T(t)} \prod_{l\in N(T)} e^{it_l \Omega_{T,\vec k}(l)}dt_l
$$
where $I_T(t) = \{(t_l)_{l\in N(T)} \in [0,t]^n \; |\; l_1 Rl_2 \Rightarrow t_{l_1} \leq t_{l_2}\}$ and where $N_T$ is defined by induction on the number of nodes of $T$ by $N_\bot = 0$ and for $T = (T_1,T_2,T_3,T_4,T_5)$,
\[
N_T = 1 - \sum_{j=1}^5 (-1)^j N_{T_j}.
\]
\end{proposition}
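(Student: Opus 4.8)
The plan is to prove both assertions by induction on the number of nodes, i.e.\ on the depth of the recursive definition of $\mathcal T_n$. The cardinality claim $|N(T)| = n$ is immediate: by Definition \ref{def:SetOrder} one has $|N(\bot)| = 0$ and $|N(T)| = 1 + \sum_{j=1}^5 |N(T_j)|$, so if $T = (T_1,\dots,T_5) \in \mathcal T_{n+1}$ with $T_j \in \mathcal T_{n_j}$ and $\sum_j n_j = n$, the induction hypothesis gives $|N(T)| = 1 + \sum_j n_j = n+1$.

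For the integral formula, the base case $T = \bot$ is trivial: $F_{(k_1)}(t) = 1$ by Definition \ref{def:functionsTrees}, while the right-hand side is $(-i)^0$ times the empty iterated integral over $I_\bot(t)\subset[0,t]^0$, which is $1$ since $N(\bot) = \emptyset$. For the inductive step, write $T = (T_1,\dots,T_5)$ and unfold the defining recursion, identifying the root phase $\Delta$ with $\Omega_{T,\vec k}(0)$:
$$
F_{T(\vec k)}(t) = -i \int_0^t e^{i\tau \Omega_{T,\vec k}(0)}\, F_{T_1(\vec k_1)}(\tau)\, \overline{F_{T_2(\vec k_2)}(\tau)}\, F_{T_3(\vec k_3)}(\tau)\, \overline{F_{T_4(\vec k_4)}(\tau)}\, F_{T_5(\vec k_5)}(\tau)\, d\tau.
$$
Into each factor I substitute the induction hypothesis for $F_{T_j(\vec k_j)}(\tau)$, writing $\tau = t_0$ for the root time.

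Two bookkeeping computations then have to line up. First the scalar phases: since each $\Omega_{T_j,\vec k_j}(l)\in\R$, conjugating the even-indexed factors turns $(-i)^{N_{T_j}}$ into $\overline{(-i)^{N_{T_j}}} = i^{N_{T_j}} = (-i)^{-N_{T_j}}$ and turns $e^{i t_l \Omega_{T_j}(l)}$ into $e^{-i t_l \Omega_{T_j}(l)}$. Multiplying the five prefactors with the leading $-i$ yields $(-i)^{1 - \sum_j (-1)^j N_{T_j}} = (-i)^{N_T}$, exactly the inductive definition of $N_T$. Simultaneously, reindexing each $l \in N(T_j)$ as $(j,l)\in N(T)$ and invoking the sign rule $\Omega_{T,\vec k}((j,l)) = (-1)^{j+1}\Omega_{T_j,\vec k_j}(l)$ of Definition \ref{def:functionLabels} absorbs precisely the sign flips produced by the conjugations, so every exponential becomes $e^{i t_{(j,l)}\Omega_{T,\vec k}((j,l))}$; together with the root factor $e^{i t_0 \Omega_{T,\vec k}(0)}$ this is $\prod_{l\in N(T)} e^{i t_l \Omega_{T,\vec k}(l)}$.

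It remains to identify the domain. After the substitution the expression is the iterated integral $\int_0^t dt_0 \prod_{j=1}^5 \int_{I_{T_j}(t_0)} \prod_{l\in N(T_j)} dt_{(j,l)}$ of the integrand above. The five inner domains are independent and each $I_{T_j}(t_0)$ forces $t_{(j,l)}\leq t_0$ together with the internal constraints $m_1 R_{T_j} m_2 \Rightarrow t_{(j,m_1)}\leq t_{(j,m_2)}$; since $t_0\leq t$, all times lie in $[0,t]$. By Fubini these fuse into a single integral over $\{(t_l)_{l\in N(T)}\in[0,t]^{n+1} : l_1 R_T l_2 \Rightarrow t_{l_1}\leq t_{l_2}\}$, which by Definition \ref{def:orderonnodes} is exactly $I_T(t)$: the root $0$ dominates every node ($l\, R_T\, 0$), while nodes in distinct subtrees are $R_T$-incomparable, so no extra ordering constraints appear. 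The main obstacle is purely this matching of the conjugation-induced signs against the conventions built into $\Omega_{T,\vec k}$ and $N_T$; once those are checked to be mutually consistent, both the phase product and the integration domain fall out of the induction.
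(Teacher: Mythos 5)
Your proof is correct and follows essentially the same route as the paper: induction on the number of nodes, with the domain identification being exactly the content of Lemma \ref{lem:ordreetIT} and the phase bookkeeping resting on the observation that $\overline{e^{i\Delta t}}=e^{-i\Delta t}$, which is precisely why the sign $(-1)^{j+1}$ is built into $\Omega_{T,\vec k}((j,l))$ and the alternating sum into $N_T$. The paper merely states that the induction "follows almost directly" and singles out this conjugation point; your write-up supplies the details of that same argument.
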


\begin{lemma}\label{lem:ordreetIT} We have 
$$
I_T(t) = \{(t_l)_{l\in N(T)}\in [0,t]^n \;|\;  \forall j=1\hdots 5 , \quad (t_{(j,l_j)})_{l_j\in N(T_j)} \in I_{T_j}(t_0) \}.
$$
\end{lemma}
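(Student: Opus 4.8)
The plan is to prove the identity by a single unwinding of the relevant definitions, with no induction needed: the statement is exactly the \emph{one-step peeling} of the time domain $I_T(t)$ into the root coordinate $t_0$ and the five sub-domains attached to $T_1,\dots,T_5$. First I would record the coordinate decomposition. By Definition~\ref{def:SetOrder} we have $N(T) = \{0\}\sqcup \bigsqcup_{j=1}^5 \{j\}\times N(T_j)$, so a tuple $(t_l)_{l\in N(T)}\in[0,t]^n$ splits bijectively into the root time $t_0 := t_{(0)}$ and the five blocks $(t_{(j,m)})_{m\in N(T_j)}$, $j=1,\dots,5$. Then I would unwind the partial order $R_T$ of Definition~\ref{def:orderonnodes}: its only comparable pairs are (i) $l\,R_T\,0$ for every $l\in N(T)$, so the root is the maximal element, and (ii) within a single block, $(j,m_1)\,R_T\,(j,m_2)\Leftrightarrow m_1\,R_{T_j}\,m_2$. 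In particular two nodes lying in different blocks are incomparable and impose no mutual constraint. Hence the defining implications $l_1\,R_T\,l_2\Rightarrow t_{l_1}\le t_{l_2}$ of $I_T(t)$ collapse to two families: $t_l\le t_0$ for every $l$ (from (i)), and, inside each block, $t_{(j,m_1)}\le t_{(j,m_2)}$ whenever $m_1\,R_{T_j}\,m_2$ (from (ii)).

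The core of the argument is to recognize that these constraints, together with $t_0\in[0,t]$, are exactly the conditions ``$(t_{(j,m)})_{m\in N(T_j)}\in I_{T_j}(t_0)$ for every $j$''. Indeed, for a fixed block $j$, membership in $I_{T_j}(t_0)$ requires two things: that each coordinate lie in the box $[0,t_0]$, and that the order $R_{T_j}$ be respected. The order part is literally (ii). For the box part, the lower bound $t_{(j,m)}\ge 0$ comes from the ambient constraint $(t_l)\in[0,t]^n$, while the upper bound $t_{(j,m)}\le t_0$ is precisely the inequality produced by (i), i.e. by the maximality of the root. So the inequalities forced by the root's being maximal are exactly what sharpens the ambient $[0,t]$ box of each sub-block into the $[0,t_0]$ box of $I_{T_j}(t_0)$. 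Checking this equivalence in both directions yields the two inclusions and hence the claimed equality.

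The step requiring the most care is the handling of the upper bound: one must verify that the scalar constraint $t_{(j,m)}\le t_0$ dictated by (i) is neither weaker nor stronger than the box constraint of $I_{T_j}(t_0)$, being exactly the replacement of the upper endpoint $t$ by $t_0$, and, symmetrically, that no cross-block inequalities are hidden in $R_T$ that $I_{T_j}(t_0)$ would fail to account for. Both points follow directly from the structure of $R_T$ in Definition~\ref{def:orderonnodes} (incomparability across distinct blocks, root maximal over all nodes), so once the order is unwound the set equality is immediate.
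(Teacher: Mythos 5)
Your proof is correct and follows the same route as the paper's: the paper's own argument is exactly the observation that $0$ is maximal for $R_T$, that nodes in distinct blocks are incomparable, and that within a block $R_T$ restricts to $R_{T_j}$, from which the set equality is immediate. Your write-up simply makes explicit the bookkeeping (root maximality turning the ambient $[0,t]$ box into the $[0,t_0]$ box for each sub-block) that the paper leaves implicit.
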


\begin{proof} We recall that $0$ is bigger than any other element of $N(T)$ and that if $j_1$ is different from $j_2$, $(j_1,m_1)$ and $(j_2,m_2)$ are not comparable while $(j,m_1)R (j,m_2)$ iff $m_1 R_{T_j} m_2$.
\end{proof}

\begin{proof}[Proof of Proposition \ref{prop:ordreetFT}] From Lemma \ref{lem:ordreetIT}, the induction follows almost directly, it remains to see is that
$$
\overline{e^{i\Delta t}} = e^{-i\Delta t}
$$
which justifies in the definition of the function $\Omega_T(j,m)$ the sign $-1$ when $j$ is even, as in
$$
\Omega_T(j,m) = -\Omega_{T_j}(m).
$$
\end{proof}

\begin{definition} Let $T \in \mathcal T_n$, we define $\mathfrak S_T$ be the set of bijection from $[1,n]\cap \N$ to $N(T)$such that $\varphi \in \mathfrak S_T$ if and only if
\[
\forall l_1,l_2 \in N(T) ,\quad l_1 R_T l_2 \Rightarrow \varphi^{-1}(l_1)\leq \varphi^{-1}(l_2).
\]
We set
\[
F^\varphi_{T,\vec k}(t) = \int_{0\leq t_1\leq \hdots \leq t_n\leq t} \prod_{j=1}^n e^{i\Omega_{T,\vec k}(j) t_j}dt_j.
\]
\end{definition}

\begin{proposition}We have 
\[
F_{T(\vec k)}(t) = (-i)^{N_T}\sum_{\varphi \in \mathfrak S_T} F_{T,\vec k}^\varphi (t).
\]
\end{proposition}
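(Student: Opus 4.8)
The plan is to start from the representation of $F_{T(\vec k)}(t)$ furnished by Proposition \ref{prop:ordreetFT}, namely
\[
F_{T(\vec k)}(t) = (-i)^{N_T}\int_{I_T(t)} \prod_{l\in N(T)} e^{it_l \Omega_{T,\vec k}(l)}\,dt_l,
\]
and to reduce the integral over the order-respecting region $I_T(t)$ to a sum of integrals over the standard ordered simplex, one summand per element of $\mathfrak S_T$. The underlying fact is the classical decomposition of the ``order polytope'' attached to the partial order $R_T$ into simplices indexed by its linear extensions; each element of $\mathfrak S_T$ is precisely such a linear extension, presented as an enumeration $\varphi(1),\dots,\varphi(n)$ of $N(T)$ compatible with $R_T$.

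Concretely, for each $\varphi\in\mathfrak S_T$ I would set
\[
\Sigma_\varphi = \Big\{(t_l)_{l\in N(T)}\in[0,t]^{N(T)} \;\Big|\; t_{\varphi(1)}\leq t_{\varphi(2)}\leq\cdots\leq t_{\varphi(n)}\Big\},
\]
and first check the inclusion $\Sigma_\varphi\subseteq I_T(t)$: if $l_1 R_T l_2$ then $\varphi^{-1}(l_1)\leq\varphi^{-1}(l_2)$ by definition of $\mathfrak S_T$, whence $t_{l_1}\leq t_{l_2}$ on $\Sigma_\varphi$. Conversely, almost every tuple of $I_T(t)$ has pairwise distinct coordinates, so it admits a unique ordering $\varphi$ with $t_{\varphi(1)}<\cdots<t_{\varphi(n)}$; the hypothesis $l_1 R_T l_2\Rightarrow t_{l_1}\leq t_{l_2}$ then forces $\varphi^{-1}(l_1)<\varphi^{-1}(l_2)$, i.e.\ $\varphi\in\mathfrak S_T$. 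Hence $I_T(t)=\bigcup_{\varphi\in\mathfrak S_T}\Sigma_\varphi$ up to a Lebesgue-null set, and for $\varphi\neq\psi$ the overlap $\Sigma_\varphi\cap\Sigma_\psi$ is contained in a hyperplane $\{t_a=t_b\}$, $a\neq b$, hence negligible.

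On each $\Sigma_\varphi$ I would perform the change of variables $s_j=t_{\varphi(j)}$, $j=1,\dots,n$, which is merely a permutation of the coordinates, so its Jacobian has absolute value $1$ and it maps $\Sigma_\varphi$ onto $\{0\leq s_1\leq\cdots\leq s_n\leq t\}$; the integrand becomes $\prod_{j=1}^n e^{is_j\,\Omega_{T,\vec k}(\varphi(j))}$, so that $\int_{\Sigma_\varphi}\prod_{l} e^{it_l\Omega_{T,\vec k}(l)}\,dt_l = F^\varphi_{T,\vec k}(t)$ (reading the index $j$ in the definition of $F^\varphi_{T,\vec k}$ as $\varphi(j)\in N(T)$). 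Summing over $\varphi\in\mathfrak S_T$ and reinstating the prefactor $(-i)^{N_T}$ yields the claim. There is no genuine analytic difficulty here: the only point requiring care is the measure-theoretic bookkeeping, namely that the simplices $\Sigma_\varphi$ tile $I_T(t)$ with negligible overlaps, which is exactly the characterization of $\mathfrak S_T$ as the set of linear extensions of $R_T$ established above.
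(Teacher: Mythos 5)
Your proof is correct and follows exactly the route the paper intends: its entire proof is the one-line remark ``Description of $I_T(t)$,'' which is precisely the decomposition of $I_T(t)$ into the simplices $\Sigma_\varphi$ indexed by the linear extensions $\varphi\in\mathfrak S_T$ of $R_T$, with null overlaps, that you carry out. Your explicit note that the index $j$ in $F^\varphi_{T,\vec k}$ must be read as $\varphi(j)\in N(T)$ is a useful clarification of the paper's notation, consistent with its later usage.
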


\begin{proof} Description of $I_T(t)$.\end{proof}

%
%
%
%
%
%

\subsection{Description of the solution}

\begin{lemma} Let $\vec k_1 \in (\Z/L)^{4n_1+1} $ and $\vec k_2 \in (\Z/L)^{4n_2+1}$. We have 
$$
\E( \overline{g_{\vec k_1}}g_{\vec k_2}) = 0
$$
unless $n_1 = n_2 = :n$ and there exists a bijection $\sigma \in \mathfrak S_{4n+1}$ such that for all $j=1,\hdots, 4n+1$, 
$$
k^1_{\sigma (j)} = k^2_j.
$$
Besides, if for all $j_1\neq j_2$, $k_{j_1}^1 \neq k_{j_2}^2 $ then $\sigma$ is uniquely defined and conserves the parity.
\end{lemma}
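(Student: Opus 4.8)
The plan is to recognise $g_{\vec k}$ as a Wick-ordered monomial in the complex Gaussians $(g_{mL})_m$ and to reduce the computation of $\E(\overline{g_{\vec k_1}}g_{\vec k_2})$, via Wick's theorem, to a sum of products of elementary two-point functions. Two inputs drive everything: the fact that $\diamond$ of first-chaos elements is ordinary Wick ordering, and the circular symmetry of the $g_m$.

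First I would record, using Proposition \ref{prop:descripAg}, that
\[
g_{\vec k_i} = \underset{j=1}{\overset{4n_i+1}{\Diamond}} g_{k^i_j,(-1)^{j+1}}
\]
is a $\diamond$-product of $4n_i+1$ first-chaos variables. Since $\xi_\alpha \diamond \xi_\beta = \sqrt{C^\alpha_{\alpha+\beta}}\,\xi_{\alpha+\beta}$ and $\xi_{ne}=H_n(\xi)/\sqrt{n!}$, this $\diamond$-product is exactly the Wick ordering of the ordinary product of its factors, so $g_{\vec k_i}$ lies in the complexified Wiener chaos of order exactly $4n_i+1$. As conjugation acts only on the real chaos coefficients, $\overline{g_{\vec k_1}}=\Diamond_j \overline{g_{k^1_j,(-1)^{j+1}}}$ lies in the same order $4n_1+1$. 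Because chaoses of different orders are orthogonal for $(\phi,\psi)\mapsto\E(\overline\phi\psi)$, the expectation vanishes whenever $4n_1+1\neq 4n_2+1$, i.e. whenever $n_1\neq n_2$; this yields the first necessary condition.

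Next I would compute the three relevant covariances from $g_m=\xi_{0,m}+i\xi_{1,m}$ and the independence and normalisation of the real family:
\[
\E(g_m\overline{g_{m'}})=2\,\delta_{m,m'},\qquad \E(g_mg_{m'})=\E(\overline{g_m}\,\overline{g_{m'}})=0
\]
(the value of the nonzero constant being immaterial). Since conjugation exchanges $g$ and $\overline g$, a factorwise covariance $\E(\overline{g_{k^1_i,(-1)^{i+1}}}\,g_{k^2_j,(-1)^{j+1}})$ is nonzero only when exactly one factor is a $g$ and the other a $\overline g$ with equal index; a short sign-tracking then shows this forces $i\equiv j\pmod 2$ and $k^1_i=k^2_j$. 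Assuming $n_1=n_2=:n$, I would apply Wick's theorem for Wick-ordered products (see \cite{Janson}): all pairings internal to a single Wick monomial vanish, so only perfect matchings between the two blocks survive, giving
\[
\E(\overline{g_{\vec k_1}}g_{\vec k_2})=\sum_{\sigma\in\mathfrak S_{4n+1}}\ \prod_{j=1}^{4n+1}\E\Big(\overline{g_{k^1_{\sigma(j)},(-1)^{\sigma(j)+1}}}\,g_{k^2_j,(-1)^{j+1}}\Big).
\]
By the covariance computation each $\sigma$-term vanishes unless $\sigma$ preserves parity and $k^1_{\sigma(j)}=k^2_j$ for all $j$, in which case it equals $2^{4n+1}$. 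Hence a nonzero left-hand side forces the existence of such a $\sigma$, in particular of a bijection with $k^1_{\sigma(j)}=k^2_j$. For the last assertion, under the distinctness hypothesis each value $k^2_j$ is attained by at most one entry of $\vec k_1$, so the matching bijection is unique; since every non-vanishing $\sigma$ preserves parity, this unique $\sigma$ does too.

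The main obstacle I anticipate is purely a matter of care rather than difficulty: rigorously invoking the pairing (diagram) formula in the present complex, Kondratiev/Fock-space setting, and carrying out the parity-and-conjugation bookkeeping without error. Both are standard consequences of the symmetric Fock structure carried by $\diamond$, and if one prefers a self-contained route they can be obtained directly by expanding each $g_{\vec k_i}$ in the orthonormal basis $(\xi_\alpha)$ and matching multi-indices, the distinct case being especially transparent since then no repeated coordinates occur and the Wick coefficients are all $1$.
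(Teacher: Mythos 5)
Your proof is correct, but it takes a genuinely different route from the paper's. You work with the complexified first chaos directly: orthogonality of Wiener chaoses of different orders gives $n_1=n_2$, and the diagram (Wick/Isserlis) formula for the $L^2$ pairing of two Wick monomials reduces everything to the elementary covariances $\E(g_m\overline{g_{m'}})=2\delta_{m,m'}$ and $\E(g_mg_{m'})=\E(\overline{g_m}\,\overline{g_{m'}})=0$; the vanishing of the pseudo-covariance is what encodes both the index matching $k^1_{\sigma(j)}=k^2_j$ and the parity constraint $\sigma(j)\equiv j \pmod 2$ in one stroke. The paper instead stays with real Gaussians: it expands each factor $g_{k_j,\pm 1}=\xi_{0,Lk_j}\pm i\,\xi_{1,Lk_j}$ over the $2^{4n+1}$ choices of real/imaginary parts indexed by a map $\iota$, obtains the bijection and $n_1=n_2$ from orthogonality of the resulting real Wick monomials, and then disposes of a non-parity-preserving $\sigma$ by an involution $\iota\mapsto\iota'$ that flips one coordinate $\iota(j_0)$, showing the sum equals its own negative (this step uses the distinctness of the $k_j$ to exchange the roles of $\xi_{0,Lk_{j_0}}$ and $\xi_{1,Lk_{j_0}}$ in law). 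Your version is shorter, yields the explicit nonzero value $2^{4n+1}$ per admissible matching, and isolates the one external input — the pairing formula for Wick products of (complexified) first-chaos elements, available in the cited reference of Janson, together with your correct observation that the iterated $\diamond$ of first-chaos elements is exactly Wick ordering of the ordinary product. The paper's version is more self-contained (only independence of the real $\xi$'s and a symmetry argument), at the cost of the bookkeeping over $\iota$. Both establish the same statement, including uniqueness and parity conservation of $\sigma$ under the distinctness hypothesis.
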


\begin{proof}We have 
$$
g_{\vec k_l} = \underset{j=1}{\overset{4n_l+1}{\Diamond}} g_{k_j^l,(-1)^{j+1}}
$$
that is by definition of  $g_{k_j^l,(-1)^{j+1}}$,
$$
g_{\vec k_l} = \underset{j=1}{\overset{4n_l+1}{\Diamond}} \Big( \xi_{0,Lk^l_j} + i (-1)^{j+1} \xi_{1,Lk^l_j}\Big).
$$
Expanding the product, we get
$$
g_{\vec k_l} = \sum_{\iota : [1,4n_l +1]\cap \N \rightarrow \{0,1\} } \underset{j=1}{\overset{4n_l+1}{\Diamond}} \xi_{\iota(j),Lk_j^l} i^{\iota (j) } (-1)^{(j+1)\iota (j)}.
$$
Therefore,
$$
\E (\overline{g_{\vec k_1}}g_{\vec k_2} ) = \sum_{\iota_1,\iota_2} \E \Big[  \underset{j_1=1}{\overset{4n_1+1}{\Diamond}} \xi_{\iota_1(j_1),Lk_{j_1}^1} i^{-\iota_1(j_1)}(-1)^{(j_1+1)\iota_1(j_1)} \underset{j_2=1}{\overset{4n_2+1}{\Diamond}} \xi_{\iota_2(j_2),Lk_{j_2}^2} i^{-\iota_2(j_2)}(-1)^{(j_2+1)\iota_2(j_2)}\Big].
$$
First, for 
$$
E \Big[  \underset{j_1=1}{\overset{4n_1+1}{\Diamond}} \xi_{\iota_1(j_1),Lk_{j_1}^1} i^{-\iota_1(j_1)}(-1)^{(j_1+1)\iota_1(j_1)} \underset{j_2=1}{\overset{4n_2+1}{\Diamond}} \xi_{\iota_2(j_2),Lk_{j_2}^2} i^{-\iota_2(j_2)}(-1)^{(j_2+1)\iota_2(j_2)}\Big]
$$
not to be zero, one needs the existence of a bijection from $[1,4n_2+1]\cap \N$ to $[1, 4n_1+1]\cap \N$ such that
$$
k_{\sigma(j_2)}^1 = k_{j_2}^2.
$$
This implies that $n_1 = n_2$ but $\sigma$ is not necessarily uniquely defined unless the $k_j^1$ are all different. In this case, we get
$$
\E (\overline{g_{\vec k_1}}g_{\vec k_2} ) = \sum_{\iota_1,\iota_2} \E \Big[  \underset{j_1=1}{\overset{4n_1+1}{\Diamond}} \xi_{\iota_1(j_1),Lk_{j_1}^1} i^{-\iota_1(j_1)}(-1)^{(j_1+1)\iota_1(j_1)} \underset{j_2=1}{\overset{4n_2+1}{\Diamond}} \xi_{\iota_2\circ \sigma(j_2),Lk_{j_2}^1} i^{-\iota_2\circ \sigma (j_2)}(-1)^{(\sigma(j_2)+1)\iota_2\circ\sigma(j_2)}\Big].
$$
For 
$$
\E \Big[  \underset{j_1=1}{\overset{4n_1+1}{\Diamond}} \xi_{\iota_1(j_1),Lk_{j_1}^1} i^{-\iota_1(j_1)}(-1)^{(j_1+1)\iota_1(j_1)} \underset{j_2=1}{\overset{4n_2+1}{\Diamond}} \xi_{\iota_2\circ \sigma(j_2),Lk_{j_2}^1} i^{-\iota_2\circ \sigma (j_2)}(-1)^{(\sigma(j_2)+1)\iota_2\circ\sigma(j_2)}\Big]
$$
not to be $0$, we need $\iota_2 \circ \sigma = \iota_1$, therefore, setting $k_j = k_j^1$ and $n = n_1 = n_2$, we get
$$
\E (\overline{g_{\vec k_1}}g_{\vec k_2} ) = \sum_{\iota} \E \Big[ \Big| \underset{j=1}{\overset{4n+1}{\Diamond}} \xi_{\iota(j),Lk_{j}}\Big|^2 \Big]\prod(-1)^{(j+\sigma(j))\iota(j)} .
$$
If $\sigma$ conserves parity, we have 
$$
\E (\overline{g_{\vec k_1}}g_{\vec k_2} ) = \sum_{\iota} \E \Big[ \Big| \underset{j=1}{\overset{4n+1}{\Diamond}} \xi_{\iota(j),Lk_{j}}\Big|^2 \Big] > 0 .
$$
Otherwise, let $j_0$ such that $j_0 + \sigma(j_0)$ is odd. For any $\iota$, we write $\iota'$ such that $\iota'(j_0) = -\iota (j_0)$ and $\iota'(j) = \iota(j)$ for all $j\neq j_0$. Since $\iota \mapsto \iota'$ is a bijection of 
$$
[1,4n+1]\cap \N \rightarrow \{0,1\},
$$
we have
$$
\E (\overline{g_{\vec k_1}}g_{\vec k_2} ) = \sum_{\iota} \E \Big[ \Big| \underset{j=1}{\overset{4n+1}{\Diamond}} \xi_{\iota'(j),Lk_{j}}\Big|^2 \Big] \prod(-1)^{(j+\sigma(j))\iota'(j)} .
$$
Since the $k_j$ are all different replacing $\xi_{\iota(j_0),Lk_{j_0}}$ by $\xi_{\iota'(j_0),Lk_{j_0}}$ consists in exchanging the roles of $\xi_{0,Lk_{j_0}}$ and $\xi_{1,Lk_{j_0}}$. In other words, since 
$$
(\xi_{\iota(j),k_j})_j \textrm{ and } (\xi_{\iota'(j),k_j})_j
$$
have the same law
$$
\E \Big[ \Big| \underset{j=1}{\overset{4n+1}{\Diamond}} \xi_{\iota'(j),k_{j}}\Big|^2 \Big] =\E \Big[ \Big| \underset{j=1}{\overset{4n+1}{\Diamond}} \xi_{\iota(j),Lk_{j}}\Big|^2 \Big] .
$$
Since 
$$
 \prod_j (-1)^{(j+\sigma(j))\iota'(j)} =  -\prod_j(-1)^{(j+\sigma(j))\iota'(j)}
 $$
 we get
$$
\E (\overline{g_{T_1}}g_{T_2} ) = -\E (\overline{g_{T_1}}g_{T_2} ) =0.
$$
\end{proof}

\begin{proposition} We have 
$$
\E ( \overline{v_{n_1,L}(k,t)} v_{n_2,L}(k,t)) = 0
$$
unless $n_1 = n_2=n$ and in this case
\begin{multline*}
\E ( \overline{v_{n,L}(k,t)} v_{n,L}(k,t)) =\\
 \frac1{(2\pi L)^{4n}} \sum_{(T_1,T_2) \in \mathcal T_n^2}(-i)^{N_{T_2}-N_{T_1}} \sum_{\sigma \in \mathfrak S_{4n+1}}\sum_{\varphi_j \in \mathfrak{S}_{T_j}} \sum_{\vec k \in C(T_1,T_2,\sigma,k)}  G_{T_1,T_2,\vec k, \vec k_\sigma}^{\varphi_1,\varphi_2} (t) \overline{A_{\vec k}} A_{\vec k_\sigma} \E(\overline{g_{\vec k}} g_{\vec k_\sigma})
\end{multline*}
where setting $\vec k = (k_1, \hdots, k_{4n+1})$, we used the notations
$$
k_\sigma  = (k_{\sigma(1)},\hdots, k_{\sigma(4n+1)}),
$$
$$
C(T_1,T_2,\sigma, k) = \{\vec k \in C_{T_1}(k)\; |\; \vec k_{\sigma} \in C_{T_2}(k) \}
$$
and 
\[
 G_{T_1,T_2,\vec k, \vec k_\sigma}^{\varphi_1,\varphi_2} = \overline{F_{T_1,\vec k}^{\varphi_1}} F_{T_2,\vec k_\sigma}^{\varphi_2}.
\]
\end{proposition}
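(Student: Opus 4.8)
The plan is to substitute the tree expansion of Proposition \ref{prop:descripvn2} into both factors, pull the deterministic quantities out of the expectation, and then apply the preceding Gaussian selection lemma to collapse the resulting sum onto matched pairs of trees. First I would write, for $j\in\{1,2\}$,
\[
v_{n_j,L}(k,t) = \Big[\frac1{(2\pi L)^2}\Big]^{n_j} \sum_{T_j\in\mathcal T_{n_j}} \sum_{\vec k^{(j)}\in C_{T_j}(k)} F_{T_j(\vec k^{(j)})}(t)\, A_{\vec k^{(j)}}\, g_{\vec k^{(j)}},
\]
so that $\overline{v_{n_1,L}(k,t)}\,v_{n_2,L}(k,t)$ becomes a sum indexed by $(T_1,T_2,\vec k^{(1)},\vec k^{(2)})$ carrying the prefactor $(2\pi L)^{-2(n_1+n_2)}$. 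Since the $F_{T_j(\vec k^{(j)})}$ and $A_{\vec k^{(j)}}$ are deterministic, the only random object in each summand is $\overline{g_{\vec k^{(1)}}}\,g_{\vec k^{(2)}}$, and linearity of $\E$ reduces the expectation to $\E(\overline{g_{\vec k^{(1)}}}\,g_{\vec k^{(2)}})$ multiplied by those deterministic coefficients.

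Next I would invoke the selection lemma: $\E(\overline{g_{\vec k^{(1)}}}\,g_{\vec k^{(2)}})$ vanishes unless $n_1=n_2=:n$ and there is $\sigma\in\mathfrak S_{4n+1}$ with $k^{(1)}_{\sigma(j)}=k^{(2)}_j$ for every $j$. This immediately gives the asserted vanishing when $n_1\neq n_2$. For $n_1=n_2=n$, I would set $\vec k:=\vec k^{(1)}$ and encode the partner through $\vec k^{(2)}=\vec k_\sigma$ with $(\vec k_\sigma)_j=k_{\sigma(j)}$, noting that the two membership conditions $\vec k\in C_{T_1}(k)$ and $\vec k_\sigma\in C_{T_2}(k)$ are precisely the definition of $C(T_1,T_2,\sigma,k)$; this trades the free sum over the partner $\vec k^{(2)}$ for a sum over $\sigma$ together with the constrained sum over $\vec k$, and turns the prefactor into $(2\pi L)^{-4n}$ and $\overline{A_{\vec k^{(1)}}}A_{\vec k^{(2)}}$ into $\overline{A_{\vec k}}A_{\vec k_\sigma}$.

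Finally I would expand each time factor via the ordering proposition $F_{T(\vec k)}(t)=(-i)^{N_T}\sum_{\varphi\in\mathfrak S_T}F^\varphi_{T,\vec k}(t)$. Using $\overline{(-i)}=i$ and $i^{N_{T_1}}=(-i)^{-N_{T_1}}$, the product of phases $\overline{(-i)^{N_{T_1}}}(-i)^{N_{T_2}}$ combines to $(-i)^{N_{T_2}-N_{T_1}}$, while $\overline{F^{\varphi_1}_{T_1,\vec k}}\,F^{\varphi_2}_{T_2,\vec k_\sigma}$ is by definition $G^{\varphi_1,\varphi_2}_{T_1,T_2,\vec k,\vec k_\sigma}$; collecting the four summation indices $(T_1,T_2)$, $\sigma$, $(\varphi_1,\varphi_2)$ and $\vec k$ reproduces the stated identity.

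The one step that is not purely mechanical is the reorganization of the partner sum. When the leaf labels of $\vec k$ are pairwise distinct the matching $\sigma$ is unique (as recorded in the lemma) and the correspondence $(\vec k^{(1)},\vec k^{(2)})\leftrightarrow(\vec k,\sigma)$ is a genuine bijection; when labels coincide the matching is no longer unique, so I would have to verify that summing over all of $\mathfrak S_{4n+1}$ against $\E(\overline{g_{\vec k}}\,g_{\vec k_\sigma})$ reproduces each matched pair with the intended weight. This diagonal multiplicity bookkeeping is the only delicate point; everything else is a direct consequence of Proposition \ref{prop:descripvn2}, the ordering proposition, and the selection lemma.
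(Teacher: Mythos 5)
Your derivation is the intended one. The paper states this proposition without proof, and it is meant to follow exactly as you describe from Proposition \ref{prop:descripvn2}, the expansion $F_{T(\vec k)}=(-i)^{N_T}\sum_{\varphi\in\mathfrak S_T}F^{\varphi}_{T,\vec k}$, and the Gaussian selection lemma; your phase bookkeeping $(-i)^{N_{T_2}-N_{T_1}}$ (via $\overline{(-i)^{N_{T_1}}}=(-i)^{-N_{T_1}}$) and the prefactor $(2\pi L)^{-2(n_1+n_2)}=(2\pi L)^{-4n}$ are both correct, as is the orthogonality for $n_1\neq n_2$.

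The point you flag and defer, though, is not something you will be able to ``verify'' as stated: the correspondence $(\vec k^{(1)},\vec k^{(2)})\leftrightarrow(\vec k,\sigma)$ is a bijection only where the leaf labels of $\vec k$ are pairwise distinct. There the identity is exact, because the unique label-matching $\sigma$ gives $\E(\overline{g_{\vec k}}g_{\vec k_\sigma})=1$ if it conserves parity and $0$ otherwise, so enlarging the sum to all of $\mathfrak S_{4n+1}$ costs nothing. On the diagonal (repeated labels) a single pair $(\vec k^{(1)},\vec k^{(2)})$ is hit by every $\sigma$ with $\vec k_\sigma=\vec k^{(2)}$, and each such $\sigma$ carries the \emph{full} expectation $\E(\overline{g_{\vec k}}g_{\vec k_\sigma})$, which is itself a sum over pairings; the right-hand side therefore overcounts these configurations. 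You have two honest ways to close this: either reinterpret $\E(\overline{g_{\vec k}}g_{\vec k_\sigma})$ as the weight of the single Wick pairing $\sigma$ (so that the $\sigma$-sum is precisely the Isserlis expansion of the true expectation and the identity is exact), or state the identity only up to diagonal corrections and check that these are harmless downstream --- which is what the paper effectively does, since in the proof of Proposition \ref{prop:n=1} the diagonal set $C_{\sigma,=}(k)$ is isolated and bounded by $O_a(L^{-1}\nu)$, and the estimates of Proposition \ref{prop:CaseB} proceed by absolute values and are insensitive to the multiplicity. Either convention is fine, but you should choose one explicitly rather than leave the multiplicity as a promissory note, because as literally written the displayed identity is exact only off the diagonal.
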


\begin{corollary}\label{cor:descripdtE} We have 
\begin{multline*}
\partial_t \E ( | v_{n,L}(k,t)|^2) = \\
\frac1{(2\pi L)^{4n}} \sum_{(T_1,T_2) (-i)^{N_{T_2}-N_{T_1}}\in \mathcal T_n^2} \sum_{\sigma \in \mathfrak S_{4n+1}}\sum_{\varphi_j \in \mathfrak{S}_{T_j}} \sum_{\vec k \in C(T_1,T_2,\sigma,k)}  \partial_t G_{T_1,T_2,\vec k, \vec k_\sigma}^{\varphi_1,\varphi_2} (t) \overline{A_{\vec k}} A_{\vec k_\sigma} \E(\overline{g_{\vec k}} g_{\vec k_\sigma})
\end{multline*}
\end{corollary}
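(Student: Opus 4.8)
The plan is to obtain the corollary simply by differentiating in time the expression for $\E(\overline{v_{n,L}(k,t)}\,v_{n,L}(k,t))$ furnished by the preceding proposition, after observing that $|v_{n,L}(k,t)|^2 = \overline{v_{n,L}(k,t)}\,v_{n,L}(k,t)$, so that the two expectations coincide. The key structural remark is that, in the right-hand side of the proposition, the only factor carrying a dependence in $t$ is $G_{T_1,T_2,\vec k,\vec k_\sigma}^{\varphi_1,\varphi_2}(t)$: the index sets running over $(T_1,T_2)\in \mathcal T_n^2$, over $\sigma \in \mathfrak S_{4n+1}$, over $\varphi_j\in\mathfrak S_{T_j}$ and over $\vec k \in C(T_1,T_2,\sigma,k)$ are all independent of $t$, and so are the amplitudes $\overline{A_{\vec k}}$ and $A_{\vec k_\sigma}$, the Gaussian moments $\E(\overline{g_{\vec k}}\,g_{\vec k_\sigma})$, and the prefactor $(-i)^{N_{T_2}-N_{T_1}}/(2\pi L)^{4n}$. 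Hence, granting that we may differentiate term by term, the operator $\partial_t$ falls entirely on $G$, which is precisely the claimed identity.

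First I would justify that the sum is, for every fixed $L$, a \emph{finite} sum, so that the exchange of $\partial_t$ with the summation is legitimate. The point is that $A_{\vec k}=\prod_{j=1}^{4n+1} a(k_j,(-1)^{j+1})$ vanishes as soon as one of the frequencies $k_j$ leaves the support of $a$; since $a$ is smooth with compact support and each $k_j$ ranges over the lattice $\Z/L$, only finitely many $\vec k$ produce a nonzero term. The remaining sums over trees and over permutations are finite by construction, there being finitely many quintic trees with $n$ nodes and finitely many bijections of a finite set. A finite linear combination of $t$-smooth functions may of course be differentiated term by term.

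Next I would record that each $G_{T_1,T_2,\vec k,\vec k_\sigma}^{\varphi_1,\varphi_2}(t)=\overline{F_{T_1,\vec k}^{\varphi_1}(t)}\,F_{T_2,\vec k_\sigma}^{\varphi_2}(t)$ is of class $C^\infty$ in $t$: indeed $F_{T,\vec k}^{\varphi}(t)=\int_{0\le t_1\le\cdots\le t_n\le t}\prod_j e^{i\Omega_{T,\vec k}(j)t_j}\,dt_j$ is a primitive, in its outermost variable, of a smooth (in fact entire) integrand, hence smooth in $t$; consequently $\partial_t G$ appearing in the statement is well defined and continuous, and by the Leibniz rule $\partial_t G = \overline{\partial_t F_{T_1,\vec k}^{\varphi_1}}\,F_{T_2,\vec k_\sigma}^{\varphi_2} + \overline{F_{T_1,\vec k}^{\varphi_1}}\,\partial_t F_{T_2,\vec k_\sigma}^{\varphi_2}$. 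Putting these remarks together, applying $\partial_t$ to the proposition's formula reproduces the corollary verbatim.

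There is essentially no analytic obstacle in this argument: the only point requiring care is the interchange of $\partial_t$ and the summation, and, as explained, this is entirely handled by the compact support of $a$, which turns the a priori infinite lattice sum into a finite one at fixed $L$.
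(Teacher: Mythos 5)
Your argument is correct and is exactly the one the paper intends: the corollary is stated without proof, followed only by the remark that the compact support of $a$ makes all the sums finite, which is precisely the justification you give for differentiating term by term.
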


\begin{remark} Note that since $a$ is of compact support, all the sums are finite and thus converge.\end{remark}

\section{Analysis}\label{sec:analysis}

\subsection{Time estimates}\label{subsec:timeestimates}

\begin{proposition}\label{prop:GMpair} Let $\Omega_1, \hdots , \Omega_M \in \Z^*/L^2$, we define 
$$
G_M (t) = \int_{0\leq t_1\leq \hdots \leq t_M \leq t } \prod_{j=1}^M e^{i\Omega_j t_j} dt_j.
$$
We have 
$$
G_M(t) = O_M (L^{2M} |t|^{\lfloor \frac{M-1}{2}\rfloor})
$$
unless $M$ is even and for all $j=1$ to $M/2$, we have $\Omega_{2j-1} + \Omega_{2j} = 0$, in which case 
$$
G_M(t) = \prod_{j=1}^{M/2} \frac{1}{\Omega_{2j}} \frac{t^{M/2}}{(M/2)!} + O (L^{2M} |t|^{\lfloor \frac{M-1}{2}\rfloor}).
$$
\end{proposition}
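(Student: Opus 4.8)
We argue by induction on $M$, integrating the innermost time variable at each step. The key quantitative input is that every frequency lies in $\Z^*/L^2$, so $|\Omega_j|\ge L^{-2}$ and hence $|1/\Omega_j|\le L^2$; this single bound is responsible for the factor $L^{2M}$. Carrying out the $t_1$-integral gives $\int_0^{t_2}e^{i\Omega_1 t_1}\,dt_1=(e^{i\Omega_1 t_2}-1)/(i\Omega_1)$, which yields the two-term recursion
\[
G_M^{(\Omega_1,\dots,\Omega_M)}(t)=\frac{1}{i\Omega_1}\Big(H-G_{M-1}^{(\Omega_2,\dots,\Omega_M)}(t)\Big),\qquad H:=\int_{0\le t_2\le\cdots\le t_M\le t} e^{i(\Omega_1+\Omega_2)t_2}\prod_{j=3}^M e^{i\Omega_j t_j}\,\prod_{j=2}^M dt_j.
\]
The plan is to read off the powers of $t$ and of $L$ from this recursion, the heuristic being that each integration either contributes a denominator $1/(\text{partial sum})$ of size $O(L^2)$, or, when that partial sum vanishes, a free integration producing one extra power of $t$ and no denominator. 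The cases $M=1,2$ are direct and serve as the base.

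First I would treat the non-resonant case $\Omega_1+\Omega_2\ne 0$. Then $H=G_{M-1}^{(\Omega_1+\Omega_2,\Omega_3,\dots,\Omega_M)}(t)$ is again an integral of the same type with nonzero frequencies, so both terms on the right are governed by the inductive hypothesis for $M-1$. Multiplying by $|1/\Omega_1|\le L^2$ and using $\lfloor(M-2)/2\rfloor\le\lfloor(M-1)/2\rfloor$ gives $G_M=O_M(L^{2M}|t|^{\lfloor(M-1)/2\rfloor})$; moreover, since the first pair does not cancel, neither reduced sequence can be fully paired, so no term of the top degree $t^{M/2}$ (which requires $M$ even) is produced, consistently with the statement.

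The resonant case $\Omega_1+\Omega_2=0$ is where the main term is created: the exponential in $H$ degenerates, the variable $t_2$ integrates freely over $[0,t_3]$, and
\[
H=\int_{0\le t_3\le\cdots\le t_M\le t} t_3\,\prod_{j=3}^M e^{i\Omega_j t_j}\,\prod_{j=3}^M dt_j.
\]
I would then integrate $t_3$ by parts: the boundary term carries the weight $t_4$ and shifts the frequency of $t_4$ to $\Omega_3+\Omega_4$ at the cost of one denominator $1/\Omega_3=O(L^2)$, while the remaining term has no weight but an extra $1/\Omega_3^2=O(L^4)$ and reduces to a lower simplex integral. Iterating, a free integration arises exactly when the next consecutive pair $\Omega_{2j-1}+\Omega_{2j}$ vanishes, each such pair contributing one power of $t$ and one factor $1/(i\Omega_{2j-1})$. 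In the fully paired case this assembles, after keeping only the top-degree boundary terms and integrating the accumulated monomial weight, into the explicit coefficient of $t^{M/2}$, namely $\frac{t^{M/2}}{(M/2)!}\prod_{j=1}^{M/2}\frac1{\Omega_{2j}}$ (using the resonance relations $\Omega_{2j-1}=-\Omega_{2j}$ to rewrite the denominators), all remaining terms being $O(L^{2M}|t|^{\lfloor(M-1)/2\rfloor})$.

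The main obstacle is the power counting: one must show that the number of free integrations never exceeds $\lfloor M/2\rfloor$ and equals $M/2$ (for $M$ even) only under the full pairing condition, and that every integration-by-parts remainder is genuinely of lower order in $t$. Because the by-parts steps introduce a polynomial weight on the current innermost variable, the cleanest way to organize the induction is to prove the slightly stronger statement for the weighted integrals $\int_{\{0\le t_1\le\cdots\le t_M\le t\}} t_1^{r}\prod_j e^{i\Omega_j t_j}$, for which the same recursion closes within this class, and then specialize to $r=0$.
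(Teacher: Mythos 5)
Your argument is correct in substance, but it runs the induction in the opposite direction from the paper. The paper peels off the \emph{outermost} variable: writing $G_{M+1}(t)=\int_0^te^{i\Omega_{M+1}\tau}G_M(\tau)\,d\tau$ and integrating by parts (using $G_M'(\tau)=e^{i\Omega_M\tau}G_{M-1}(\tau)$) gives
$$
G_{M+1}(t)=\frac{e^{i\Omega_{M+1}t}}{i\Omega_{M+1}}G_M(t)-\int_0^t\frac{e^{i(\Omega_{M+1}+\Omega_M)\tau}}{i\Omega_{M+1}}G_{M-1}(\tau)\,d\tau,
$$
so the resonant pairs are consumed from the top ($\Omega_M+\Omega_{M+1}$ first), and the remaining outer integral is evaluated by substituting the inductive expansion of $G_{M-1}$ (polynomial main term plus error) and integrating term by term. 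You instead integrate the \emph{innermost} variable, consuming pairs from the bottom ($\Omega_1+\Omega_2$ first); the price is that in the resonant case the free $t_2$-integration leaves a monomial weight on the new innermost variable, so your inductive class must be enlarged to weighted simplex integrals $\int t_1^r\prod_j e^{i\Omega_jt_j}$. You correctly identify this as the point needing care, and that class is indeed closed under your recursion (each exact integration of $t_1^re^{i\Omega_1t_1}$ trades powers of the weight for denominators of size $O(L^2)$, or keeps the weight when the partial sum of frequencies vanishes), so the power counting closes and your non-resonant and resonant cases both land within the claimed bounds. What the paper's orientation buys is that it never leaves the unweighted class; what yours buys is that the pairing is read off directly in the order $(\Omega_1,\Omega_2),(\Omega_3,\Omega_4),\dots$ of the statement. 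One shared cosmetic point: the explicit constant of the main term as stated (and as you reproduce it) is off by powers of $i$ — for $M=2$ a direct computation gives $G_2(t)=it/\Omega_2+O(L^4)$, and the paper's own induction in fact produces $-\prod_j(i\Omega_{2j})^{-1}$ rather than $\prod_j\Omega_{2j}^{-1}$, as would yours — but only the modulus of this coefficient is used in the sequel, so nothing downstream is affected.
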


\begin{proof} We prove it by induction. If $M=0$, we have 
$$
G_M(t) = 1 = \prod_{j=1}^{0} \frac{1}{\Omega_{2j}} \frac{t^0}{0!} + O (L^{0} |t|^{\lfloor \frac{M-1}{2}\rfloor}).
$$

We assume that the proposition is true for all $m\leq M$, we prove it for $M+1$.

We have if $M=0$,
$$
G_{M+1} (t) = \frac{e^{i\Omega_1 t}-1}{i\Omega_1} = O(L^2).
$$

Otherwise, since
$$
G_{M+1} (t) = \int_{0}^t e^{i\Omega_{M+1} t_{M+1} } G_M(t_{M+1}) dt_{M+1},
$$
by integration by parts, we get
$$
G_{M+1}(t) = \frac{e^{i\Omega_{M+1} t }}{i\Omega_{M+1}} G_M(t) - \int_{0}^t \frac{e^{i(\Omega_{M+1} + \Omega_M)\tau }}{i\Omega_{M+1}} G_{M-1}(\tau) d\tau.
$$

First case, $M+1$ is odd. We have since $M$ is even
$$
|G_M(t)| = O_M(t^{M/2} L^M ) + O_M(t^{(M-2)/2 L^{2M}} = O_M(L^{2M} t^{M/2}),
$$
therefore
$$
\frac{e^{i\Omega_{M+1} t }}{i\Omega_{M+1}} G_M(t)  = O_M(t^{M/2} L^{2(M+1)}).
$$
Since $M-1$ is even, we have
$$
\frac{e^{i(\Omega_{M+1} + \Omega_M)\tau }}{i\Omega_{M+1}} G_{M-1}(\tau) = O_M (\tau^{(M-2)/2}L^{2(M-1)})
$$
thus
$$
\int_{0}^t d\tau \frac{e^{i(\Omega_{M+1} + \Omega_M)\tau }}{i\Omega_{M+1}} G_{M-1}(\tau) = O_M(t^{M/2} L^{2M})
$$
which concludes the induction when $M+1$ is odd.

Second case, $M+1$ is even. We have 
$$
G_M(t) = O_M(L^{2M}t^{(M-1)/2})
$$
hence 
$$
\frac{e^{i\Omega_{M+1} t }}{i\Omega_{M+1}} G_M(t)  = O_M(L^{2(M+1)} t^{(M-1)/2})
$$
Then, if $\Omega_{M+1} + \Omega_M \neq 0$, we have either $M=1$ and then
$$
\int_{0}^t d\tau \frac{e^{i(\Omega_{M+1} + \Omega_M)\tau }}{i\Omega_{M+1}} G_{M-1}(\tau) = \frac{e^{i(\Omega_{M+1} + \Omega_M)t} -1}{-\Omega_{M+1}(\Omega_{M+1} + \Omega_M)} = O(L^4)
$$
or $M> 1$ and by integration by parts
\begin{multline*}
\int_{0}^t \frac{e^{i(\Omega_{M+1} + \Omega_M)\tau }}{i\Omega_{M+1}} G_{M-1}(\tau) d\tau = \\
\frac{e^{i(\Omega_{M+1} + \Omega_M)t }} {-(\Omega_{M+1} + \Omega_M)\Omega_{M+1}} G_{M-1}(t)+ \int_{0}^t d\tau \frac{e^{i(\Omega_{M+1} + \Omega_M)t }} {-(\Omega_{M+1} + \Omega_M)\Omega_{M+1}} e^{i\Omega_{M-1}\tau} G_{M-2}(\tau).
\end{multline*}
We have 
$$
\frac{e^{i(\Omega_{M+1} + \Omega_M)t }} {-(\Omega_{M+1} + \Omega_M)\Omega_{M+1}} G_{M-1}(t) = O_M(L^4 L^{2(M-1)} t^{(M-1)/2} )
$$
and 
$$
 \frac{e^{i(\Omega_{M+1} + \Omega_M)t }} {-(\Omega_{M+1} + \Omega_M)\Omega_{M+1}} e^{i\Omega_{M-1}\tau} G_{M-2}(\tau) = O_M(L^4 L^{2(M-2)} t^{(M-3)/2})
 $$
 hence
 $$
 G_{M+1}(t) = O_M(L^{2(M+1)}t^{(M-1)/2}).
 $$
 
If $\Omega_{M+1} + \Omega_M = 0$, either at least $\Omega_{2j-1} + \Omega_{2j} \neq 0$ for one $j=1$ to $(M-1)/2$, in which case
$$
\int_{0}^t d\tau \frac{e^{i(\Omega_{M+1} + \Omega_M)\tau }}{i\Omega_{M+1}} G_{M-1}(\tau) = \int_{0}^t O_M (L^2 L^{2(M-1)} \tau^{(M-3)/2}) = O_M(L^{2(M+1)} t^{(M-1)/2})
$$
or $ \Omega_{2j-1} + \Omega_{2j} = 0$ for all $j=1$ to $(M-1)/2$ (which includes $M=1$), in which case
\begin{multline*}
\int_{0}^t d\tau \frac{e^{i(\Omega_{M+1} + \Omega_M)\tau }}{i\Omega_{M+1}} G_{M-1}(\tau) = \int_{0}^t \frac1{i\Omega_{M+1}} \Big( \prod_{j=1}^{(M-1)/2} \frac{1}{i\Omega_{2j}} \frac{\tau^{(M-1)/2}}{(M-1)/2!} + O_M (L^{2(M-1)} \tau^{(M-3)/2})\Big) = \\
\prod_{j=1}^{(M+1)/2} \frac{1}{i\Omega_{2j}} \frac{t^{(M+1)/2}}{(M+1)/2!} + O_M(L^{2(M+1)} t^{(M-1)/2}).
\end{multline*}
\end{proof}

\begin{proposition}\label{prop:GMimpair} Let $M$ be odd. We are in one of the following cases : \begin{enumerate}
\item $M=1$ then
$$
G_M(t) = \frac{e^{i\Omega_M t}-1}{i\Omega_M};
$$
\item we are not in case 1, $\forall  j= 1, \hdots , \frac{M-1}{2}$, $\Omega_{2j} = -\Omega_{2j-1} = -\Omega_M$ then
$$
G_M(t) = (i\Omega_M)^{-(M+1)/2}  (-1)^{(M-1)/2}\frac{t^{(M-1)/2}}{(M-1)/2 ! }(e^{i\Omega_M t}-1) + O(L^{2M} t^{(M-3)/2});
$$
\item we are not in cases 1 or 2, $\forall j = 1,\hdots, \frac{M-1}{2}$, $\Omega_{2j} + \Omega_{2j-1} = 0$ then
$$
G_M(t) =(-1)^{(M-1)/2} \prod_{j=0}^{(M-1)/2} \frac1{i\Omega_{2j+1}} e^{i\Omega_M t}  \frac{t^{(M-1)/2}}{(M-1)/2 ! } + O(L^{2M} t^{(M-3)/2});
$$
\item we are not in cases 1, 2, or 3, $\forall j = 1,\hdots, \frac{M-1}{2}$, $\Omega_{2j} + \Omega_{2j+1} = 0$ then 
$$
G_M(t) = (-1)^{(M+1)/2}\prod_{j=0}^{(M-1)/2} \frac1{i\Omega_{2j+1}}   \frac{t^{(M-1)/2}}{(M-1)/2 ! } + O(L^{2M} t^{(M-3)/2});
$$
\item we are not in cases 1, 2, 3 or 4, there exists $j_0 \in [1,\frac{M-1}{2}] \cap \N$ such that $\Omega_{2j_0+1} + \Omega_{2j_0} + \Omega_{2j_0-1} = 0 $ and $\forall j>j_0$, $\Omega_{2j+1} + \Omega_{2j} = 0$ and $\forall j<j_0$, $\Omega_{2j} + \Omega_{2j-1} = 0$ then
$$
G_M(t) = (-1)^{(M-1)/2}\prod_{j=0}^{(M-1)/2} \frac1{i\Omega_{2j+1}}   \frac{t^{(M-1)/2}}{(M-1)/2 ! } + O(L^{2M} t^{(M-3)/2});
$$
\item we are not in cases 1, 2, 3, 4, or 5, then
$$
G_M(t) = O(L^{2M} t^{(M-3)/2}).
$$
\end{enumerate}
\end{proposition}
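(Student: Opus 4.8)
The plan is to prove this by induction on the odd integer $M$, exactly paralleling the proof of Proposition \ref{prop:GMpair} and relying on it for the even intermediate integrals. The base case $M=1$ is case 1 and follows from the direct integration already recorded there. For the inductive step I would use the same integration-by-parts identity as in the even case, namely
$$
G_M(t) = \frac{e^{i\Omega_M t}}{i\Omega_M}G_{M-1}(t) - \int_0^t \frac{e^{i(\Omega_M+\Omega_{M-1})\tau}}{i\Omega_M}G_{M-2}(\tau)\,d\tau,
$$
in which $G_{M-1}$ is an even-length integral governed by Proposition \ref{prop:GMpair} and $G_{M-2}$ is a shorter odd-length integral governed by the inductive hypothesis. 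I would then analyze the two terms separately and recombine.

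The first term carries the full weight $t^{(M-1)/2}$ precisely when $G_{M-1}$ is at top order, that is when $\Omega_{2j-1}+\Omega_{2j}=0$ for all $j\le (M-1)/2$ (the left-pairing condition), and it produces the surviving $e^{i\Omega_M t}$. Matching the product $\prod_j \frac{1}{i\Omega_{2j}}$ coming from Proposition \ref{prop:GMpair} against $\prod_{j=0}^{(M-1)/2}\frac{1}{i\Omega_{2j+1}}$ via the relation $\Omega_{2j}=-\Omega_{2j-1}$ gives exactly the coefficient and sign $(-1)^{(M-1)/2}$ of case 3. The second term reaches top order only when $\Omega_M+\Omega_{M-1}=0$ removes the oscillation in $\tau$ (so no power of $t$ is lost to integration by parts) and $G_{M-2}$ is simultaneously at its own top order $\tau^{(M-3)/2}$; by the inductive hypothesis this forces a right-pairing or triple-resonance structure on the lower indices, and integrating the surviving monomial yields case 4 (pure right-pairing, which leaves $\Omega_1$ free and hence produces no surviving exponential) or case 5 (a single triple resonance at some $j_0$ separating a left-paired block below from a right-paired block above).

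The delicate point, and where I expect the real work, is the overlap case 2, where full left-pairing and the relation $\Omega_M+\Omega_{M-1}=0$ hold together and force every $\Omega_j$ to equal $\pm\Omega_M$ in strict alternation. Here both terms of the recursion contribute at order $t^{(M-1)/2}$ and must be added: the first gives $e^{i\Omega_M t}$ times the top coefficient, while the second, through $-\int_0^t \frac{1}{i\Omega_M}G_{M-2}(\tau)\,d\tau$ with $G_{M-2}$ itself of case-2 type (carrying the factor $e^{i\Omega_M\tau}-1$), supplies the complementary piece, and the two must combine into the $(e^{i\Omega_M t}-1)$ of case 2. Verifying this cancellation is the main obstacle, together with the purely combinatorial check that the six listed conditions are mutually exclusive and exhaustive, and that in every configuration not covered by cases 1--5 at least one of the two recursion terms loses a power of $t$ so that $G_M(t)=O(L^{2M}t^{(M-3)/2})$.

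Finally, the uniform powers of $L$ in all the remainders are automatic from the same bookkeeping as in the even case: every explicit factor $1/\Omega_j$ or $1/(\Omega_M+\Omega_{M-1})$ produced by the integration by parts is $O(L^2)$ because $\Omega_j\in\Z^*/L^2$, and the induction introduces exactly one such factor per step, so the error at length $M$ is $O(L^{2M}t^{(M-3)/2})$ as stated. I would carry the explicit leading coefficient along with the remainder at each stage, rather than tracking only the order of magnitude, since the cases 2--5 differ only through that coefficient and its accompanying exponential, so the precision of the inductive statement must match that of Proposition \ref{prop:GMpair}.
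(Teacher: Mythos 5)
Your plan is essentially the paper's proof: induction on odd $M$ via the integration-by-parts recursion $G_M(t) = \frac{e^{i\Omega_M t}}{i\Omega_M}G_{M-1}(t) - \int_0^t \frac{e^{i(\Omega_M+\Omega_{M-1})\tau}}{i\Omega_M}G_{M-2}(\tau)\,d\tau$, with Proposition \ref{prop:GMpair} controlling the even-length $G_{M-1}$, the inductive hypothesis controlling $G_{M-2}$, and the exact formula \eqref{Exactintegral} for $\int_0^t e^{i\alpha\tau}\tau^n\,d\tau$ deciding which contributions survive at top order. Your reading of cases 2, 3 and 4 --- in particular the recombination of the two recursion terms into the factor $(e^{i\Omega_M t}-1)$ in case 2, and the $O(L^2)$-per-step bookkeeping for the remainders --- matches what the paper does.

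The one point where your plan as written would fail is the claim that the second term reaches top order \emph{only} when $\Omega_M+\Omega_{M-1}=0$. When $G_{M-2}$ is in case 1, 2 or 3 of the inductive hypothesis it carries a surviving factor $e^{i\Omega_{M-2}\tau}$ at its own top order, so the integrand of the second term oscillates with total phase $\Omega_M+\Omega_{M-1}+\Omega_{M-2}$; that term is then at top order exactly when this triple sum vanishes, even though $\Omega_M+\Omega_{M-1}=-\Omega_{M-2}\neq 0$. This is precisely case 5 with $j_0=\frac{M-1}{2}$ (the paper's subcase 5.1), and, dually, excluding it is the key step in case 6: there one must argue that $\Omega_M+\Omega_{M-1}+\Omega_{M-2}\neq 0$ (not merely $\Omega_M+\Omega_{M-1}\neq 0$) whenever $G_{M-2}$ is in cases 1--3, on pain of $G_M$ actually being in case 5. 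With your criterion this resonance slips through: you would either miss the top-order contribution in case 5 when the triple resonance sits at the top of the chain, or be unable to justify the bound $O(L^{2M}t^{(M-3)/2})$ in case 6. Repairing this requires exactly the paper's case-by-case discussion of which case $G_{M-2}$ falls into and which sums of two or three consecutive $\Omega$'s can vanish without contradicting the standing exclusions.
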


Before proving the proposition, we prove the following lemma.

\begin{lemma} Let $n \in \N$, and $\alpha \neq  0$, we have 
\begin{equation}\label{Exactintegral}
\int_{0}^t e^{i\alpha \tau} \tau^n d\tau = \frac{e^{i\alpha t}}{i\alpha} \sum_{k=0}^n \Big( -\frac{1}{i\alpha}\Big)^k \frac{n!}{(n-k)!} t^{n-k} + \Big( -\frac1{i\alpha}\Big)^{n+1} n! .
\end{equation}
\end{lemma}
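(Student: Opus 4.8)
The plan is to prove \eqref{Exactintegral} by induction on $n$, the inductive step being a single integration by parts. For the base case $n=0$, one computes directly that $\int_0^t e^{i\alpha\tau}\,d\tau = (e^{i\alpha t}-1)/(i\alpha)$ and checks that the right-hand side of \eqref{Exactintegral} reduces to the same expression: the sum contains only the $k=0$ term $e^{i\alpha t}/(i\alpha)$, and the remaining term is $-1/(i\alpha)$, whose sum is exactly $(e^{i\alpha t}-1)/(i\alpha)$.

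For the inductive step, I assume \eqref{Exactintegral} for some $n$ and integrate $\int_0^t e^{i\alpha\tau}\tau^{n+1}\,d\tau$ by parts, differentiating $\tau^{n+1}$ and integrating $e^{i\alpha\tau}$. This produces the boundary term $\frac{e^{i\alpha t}}{i\alpha}\,t^{n+1}$ together with $-\frac{n+1}{i\alpha}\int_0^t e^{i\alpha\tau}\tau^n\,d\tau$, into which I substitute the induction hypothesis. Using $-\frac{1}{i\alpha}\left(-\frac{1}{i\alpha}\right)^k = \left(-\frac{1}{i\alpha}\right)^{k+1}$, the prefactor $-\frac{n+1}{i\alpha}$ becomes a shift of the summation index $k\mapsto k+1$; the surviving sum then runs from $k=1$ to $n+1$, the factor $(n+1)\,n!$ becomes $(n+1)!$, the denominator $(n-k)!$ becomes $(n+1-k)!$ after the shift, and the power $t^{n-k}$ becomes $t^{n+1-k}$. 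The boundary term supplies precisely the missing $k=0$ term of this new sum, and the leftover constant becomes $\left(-\frac{1}{i\alpha}\right)^{n+2}(n+1)!$. Assembling these contributions yields \eqref{Exactintegral} with $n$ replaced by $n+1$.

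The only genuine difficulty is the factorial and index bookkeeping in the shift $k\mapsto k+1$: one must absorb the factor $n+1$ into $n!$ to form $(n+1)!$ and track the power of $t$ consistently, so that the reindexed sum matches the claimed form and the boundary term slots in as the new $k=0$ contribution. As an alternative that sidesteps the reindexing, I could let $F(t)$ denote the claimed right-hand side and verify both $F(0)=0$ — at $t=0$ only the $k=n$ term of the sum survives, and it cancels the final constant term because $\frac{1}{i\alpha}\left(-\frac{1}{i\alpha}\right)^n = -\left(-\frac{1}{i\alpha}\right)^{n+1}$ — and $F'(t)=e^{i\alpha t}t^n$ via a telescoping computation of the derivative; the fundamental theorem of calculus then identifies $F$ with the integral. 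I expect the induction to be the cleaner of the two approaches to write out.
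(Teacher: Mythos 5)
Your proof is correct and follows essentially the same route as the paper: induction on $n$ with the base case $I_0=(e^{i\alpha t}-1)/(i\alpha)$ and an inductive step given by a single integration by parts producing $I_{n+1}=\frac{e^{i\alpha t}}{i\alpha}t^{n+1}-\frac{n+1}{i\alpha}I_n$, followed by the reindexing you describe. The bookkeeping in your shift $k\mapsto k+1$ checks out, so nothing further is needed.
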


\begin{proof} Writing
$$
I_n = \int_{0}^t e^{i\alpha \tau} \tau^n d\tau
$$
the proof follows from
$$
I_0 = \frac{e^{i\alpha t -1}}{i\alpha}
$$
and the induction relation
$$
I_{n+1} = \frac{e^{i\alpha t}}{i\alpha} t^n  - \frac{n}{i\alpha} I_{n-1}.
$$
\end{proof}

\begin{proof}[Proof of Proposition \ref{prop:GMimpair}]We omit in the proof the dependance in $M$ of the constant. We proceed by induction over $M$. 

If $M=1$, we are in case 1. 

We assume that the proposition is true up to $M-2$, and we prove it for $M$. 

\textbf{Case 2} We have by definition
$$
G_M(t) = \int_{0}^t dt_M e^{i\Omega_M t_M}\int_{0\leq t_1\leq \hdots \leq t_{M-1}\leq t_M} \prod_{n=1}^{M-1}e^{i\Omega_nt_n} dt_n.
$$
Because of the hypothesis on the $\Omega_n$, we have 
$$
G_M(t) = \int_{0}^t dt_M e^{i\Omega_M t_M}\int_{0\leq t_1\leq \hdots \leq t_{M-1}\leq t_M} \prod_{n=1}^{(M-1)/2}e^{i\Omega_M(t_{2n-1} -t_{2n})} dt_{2n-1}dt_{2n}.
$$
By integration by parts, we get
$$
G_M(t) = \frac{e^{i\Omega_M t}}{i\Omega_M} G_{M-1}(t) - \int_{0}^t \frac{1}{i\Omega_M} G_{M-2(\tau)} d\tau.
$$
By Proposition \ref{prop:GMpair}, we have 
$$
\frac{e^{i\Omega_M t}}{i\Omega_M} G_{M-1}(t) = (-1)^{(M-1)/2} \frac1{(i\Omega_M)^{(M+1)/2}} \frac{t^{(M-1)/2}}{(M-1)/2!} +O(t^{(M-3)/2} L^{2M}).
$$
We have that $G_{M-2}$ is either in case 1 or 2, hence
$$
G_{M-2} (\tau) = (-1)^{(M-3)/2}(e^{i\Omega_M \tau} - 1)  \frac1{(i\Omega_M)^{(M-1)/2}} \frac{\tau^{(M-3)/2}}{(M-3)/2!} + O(\tau^{(M-5)/2} L^{2(M-2)}).
$$
We use \eqref{Exactintegral} to get
$$
G_M(t) = \frac{t^{(M-1)/2}}{(M-1)/2!} (-1)^{(M-1)/2}( e^{i\Omega_M t} -1) (i\Omega_M)^{-(M+1)/2}+ O(t^{(M-3)/2} L^{2M}),
$$
which is the desired result.

\textbf{Case 3} By integration by parts, we have 
$$
G_M(t) = \frac{e^{i\Omega_M t}}{i\Omega_M} G_{M-1} (t) - \int_{0}^t \frac{e^{i(\Omega_M + \Omega_{M-1}) \tau}}{i\Omega_M} G_{M-2}(\tau)  d\tau.
$$
By Proposition \ref{prop:GMpair}, we have 
$$
G_{M-1}(t) = (-1)^{(M-1)/2}\prod_{j=0}^{(M-3)/2} \frac1{i\Omega_{2j+1}} \frac{t^{(M-1)/2}}{(M-1)/2 ! } + O(t^{(M-3)/2} L^{2(M-1)})
$$
therefore
\begin{multline*}
G_M(t) = \\
(-1)^{(M-1)/2}e^{i\Omega_M t}\prod_{j=0}^{(M-1)/2} \frac1{i\Omega_{2j+1}} \frac{t^{(M-1)/2}}{(M-1)/2 ! }   - \int_{0}^t \frac{e^{i(\Omega_M + \Omega_{M-1}) \tau}}{i\Omega_M} G_{M-2}(\tau)  d\tau + O(t^{(M-3)/2}L^{2M}).
\end{multline*}
We have that $G_{M-2}$ is either in case 1, 2 or 3. If it is in case 1 or 2, then $ \Omega_M  +\Omega_{M-1} \neq 0 $ otherwise $G_M$ is in case 2 and $\Omega_M + \Omega_{M-1} + \Omega_{M-2} = \Omega_M \neq 0$, therefore, 
$$
G_M(t) = e^{i\Omega_M t}\prod_{j=0}^{(M-1)/2} \frac1{i\Omega_{2j+1}} \frac{t^{(M-1)/2}}{(M-1)/2 ! }  - \int_{0}^t \frac{e^{i(\Omega_M + \Omega_{M-1}) \tau}}{i\Omega_M} \frac{e^{i\Omega_{M-2}\tau} - 1}{i\Omega_{M-2}}  d\tau + O(t^{(M-3)/2}L^{2M})
$$
if $G_{M-2}$ is in case 1 and 
\begin{multline*}
G_M(t) = e^{i\Omega_M t}\prod_{j=0}^{(M-1)/2} \frac1{i\Omega_{2j+1}} \frac{t^{(M-1)/2}}{(M-1)/2 ! } \\
 - \int_{0}^t \frac{e^{i(\Omega_M + \Omega_{M-1}) \tau}}{i\Omega_M}(-1)^{(M-3)/2} \frac{e^{i\Omega_{M-2}\tau} - 1}{(i\Omega_{M-2})^{(M-1)/2}}\frac{\tau^{(M-3)/2}}{(M-3)/2!}  d\tau +  O(t^{(M-3)/2}L^{2M})
\end{multline*}
if $G_{M-2}$ is in case 2. In both cases, by \eqref{Exactintegral}, we get
$$
G_M(t) = e^{i\Omega_M t}\prod_{j=0}^{(M-1)/2} \frac1{i\Omega_{2j+1}} \frac{t^{(M-1)/2}}{(M-1)/2 ! } + O(t^{(M-3)/2}L^{2M}) 
$$
which is the desired result.

If $G_{M-2}$ is in case 3, then
$$
G_{M-2}(t) = (-1)^{(M-3)/2}e^{i\Omega_{M-2}t} \prod_{j=0}^{(M-1)/2} \frac1{i\Omega_{2j+1}}  \frac{t^{(M-3)/2}}{(M-3)/2 ! } + O(t^{(M-5)/2}L^{2M}).
$$
Since $\Omega_M + \Omega_{M-1} + \Omega_{M-2} = \Omega_M \neq 0$, we have by \eqref{Exactintegral}
$$
G_M(t) = e^{i\Omega_M t}\prod_{j=0}^{(M-1)/2} \frac1{i\Omega_{2j+1}} \frac{t^{(M-1)/2}}{(M-1)/2 ! } + O(t^{(M-3)/2}L^{2M}) 
$$

\textbf{Case 4} The integration by parts yields
$$
G_M(t) = \frac{e^{i\Omega_M t}}{i\Omega_M} G_{M-1} (t) - \int_{0}^t \frac{1}{i\Omega_M} G_{M-2}(\tau)  d\tau.
$$
Because we are not in case 2 or 3, by Proposition \ref{prop:GMpair}, we have
$$
G_{M-1} (t) = O(t^{(M-3)/2 }L^{2(M-1)})
$$
thus
$$
G_M(t) =- \int_{0}^t \frac{1}{i\Omega_M} G_{M-2}(\tau)  d\tau + O(t^{(M-3)/2}L^{2M}) .
$$
We have that $G_{M-2}$ is either in case 1, 2, or 4. If $G_{M-2} $ is in case 1 then
$$
 - \int_{0}^t \frac{1}{i\Omega_M} G_{M-2}(\tau)  d\tau = - \int_{0}^t \frac{e^{i\Omega_{M-2} \tau} -1}{i\Omega_M i \Omega_{M-2}}  = \frac{t}{i\Omega_M i \Omega_{M-2}} + O(L^{6})
 $$
which yields the desired result.

If $G_{M-2}$ is in case 2, then 
$$
G_M(t) =  (-1)^{(M-1)/2} \int_{0}^t \frac{1}{i\Omega_M} \frac{e^{i\Omega_{M-2}\tau} - 1}{(i\Omega_{M-2})^{(M-3)/2}}  \frac{\tau^{(M-3)/2}}{(M-3)/2 ! }  d\tau +O(t^{(M-3)/2}L^{2M}) .
$$
By \eqref{Exactintegral}, we get
$$
G_M(t) = (-1)^{(M+1)/2}  \frac{1}{i\Omega_M} \frac{  1}{(i\Omega_{M-2})^{(M-3)/2}}  \frac{t^{(M-1)/2}}{(M-1)/2 ! } + O(t^{(M-3)/2}L^{2M})  .
$$

If $G_{M-2}$ is in case 4, then
$$
G_M(t) = - (-1)^{(M-1)/2} \int_{0}^t \frac{1}{i\Omega_M} \prod_{j=0}^{(M-3)/2} \frac1{i\Omega_{2j+1}} \frac{\tau^{(M-3)/2}}{(M-3)/2 ! }  d\tau +  O(t^{(M-3)/2}L^{2M})
$$
therefore
$$
G_M(t) =  (-1)^{(M+1)/2} \prod_{j=0}^{(M-1)/2} \frac1{i\Omega_{2j+1}} \frac{t^{(M-1)/2}}{(M-1)/2 ! } + O(t^{(M-3)/2}L^{2M})   .
$$

\textbf{Case 5} Let $j_0$ such that $\Omega_{2j_0 + 1} + \Omega_{2j_0}  + \Omega_{2j_0 - 1} = 0$. We cannot have $ \Omega_{2j_0}  + \Omega_{2j_0 - 1} = 0$ hence
$$
G_{M-1}(t) = O(t^{M-3)/3}L^{2(M-1)})
$$
by Proposition \ref{prop:GMpair}.

\textbf{Case 5.1 :} $j_0 = \frac{M-1}{2}$. We recall that by integration by parts
$$
G_M (t) = \frac{e^{i\Omega_M t}}{i\Omega_M} G_{M-1}(t) - \int_0^t \frac{e^{i(\Omega_M + \Omega_{M-1} )\tau}}{i\Omega_M} G_{M-2}(\tau) d\tau.
$$
We have that $G_{M-2}$ is either in case 1, 2, or 3. If $G_{M-2}$ is either in case 1 or 2, we have 
$$
G_{M-2}(\tau) =(-1)^{(M-3)/2} \frac{\tau^{(M-3)/2}}{(M-3)/2 !} \frac{e^{i\Omega_{M-2}}-1}{(i\Omega_{M-2})^{(M-1)/2}} + O( \tau^{(M-5)/2} L^{2(M-2)}).
$$
Therefore by \eqref{Exactintegral}, since $\Omega_M +\Omega_{M-1} = -\Omega_{M-2} \neq 0$ and $\Omega_M + \Omega_{M-1} + \Omega_{M-2} = 0$, we have
$$
G_M(t) = (-1)^{(M-1)/2} \frac1{i\Omega_M (i\Omega_{M-2})^{(M-1)/2}} \frac{\tau^{(M-1)/2}}{(M-1)/2 ! } + O(t^{(M-3)/2 L^{2M}}).
$$
If $G_{M-2}$ is in case 3, we have 
$$
G_M(t) = - \int_{0}^t \frac{e^{i(\Omega_M + \Omega_{M-1} + \Omega_{M-2} )\tau}}{i\Omega_M} (-1)^{(M-3)/2} \prod_{j=0}^{(M-3)/2} \frac1{i\Omega_{2j+1} } \frac{\tau^{(M-3)/2}}{(M-3)/2 !} + O(t^{(M-3)/2 L^{2M}}).
$$
And since $\Omega_M + \Omega_{M-1} + \Omega_{M-2} = 0$, we get
$$
G_M(t) =  (-1)^{(M-1)/2} \prod_{j=0}^{(M-1)/2} \frac1{i\Omega_{2j+1} } \frac{t^{(M-1)/2}}{(M-1)/2 !} + O(t^{(M-3)/2 L^{2M}}).
$$
This is the desired result in case 5.1.

\textbf{Case 5.2} $j_0 \neq \frac{M-1}{2}$. In this case, $\Omega_M + \Omega_{M-1} = 0$ and $G_{M-2}$ is in case 5. Therefore
$$
G_M(t) = -\int_{0}^t \frac{d\tau}{i\Omega_M} (-1)^{(M-3)/2} \prod_{j=0}^{(M-3)/2} \frac1{i\Omega_{2j+1}} \frac{\tau^{(M-3)/2}}{(M-3)/2!} + O(t^{(M-3)/2} L^{2M})
$$
which yields by exact computation
$$
G_M (t) = (-1)^{(M-1)/2}  \prod_{j=0}^{(M-1)/2} \frac1{i\Omega_{2j+1}} \frac{t^{(M-1)/2}}{(M-1)/2!} + O(t^{(M-3)/2} L^{2M}).
$$

\textbf{Case 6} Because we are not in cases 1, 2, or 3, we have 
$$
G_{M-1} (t) = O(t^{(M-3)/2} L^{2(M-1)}).
$$
If $G_{M-2}$ is in case 1 or 2, we have $\Omega_M + \Omega_{M-1} + \Omega_{M-2} \neq 0 $ otherwise $G_M$ is in case 5 (and even 5.1) and $\Omega_M + \Omega_{M-1} \neq 0$ otherwise $G_M$ is in case 2 or 4. Therefore by integration by parts, induction hypothesis on $G_{M-2}$ and \eqref{Exactintegral}, we get
$$
G_M(t) =  O(t^{(M-3)/2} L^{2M}).
$$

If $G_{M-2}$ is in case 3, then $\Omega_M + \Omega_{M-1} + \Omega_{M-2} \neq 0$ otherwise $G_M$ is in case 5 and we conclude by the usual strategy.

If $G_{M-2}$ is in case 4, then $\Omega_M + \Omega_{M-1} \neq 0$ otherwise $G_M$ is in case 4 and we conclude by the usual strategy.

If $G_{M-2}$ is in case 5, then $\Omega_M + \Omega_{M-1} \neq 0$ otherwise $G_M$ is in case 5 and we conclude by the usual strategy.

If $G_{M-2}$ is in case 6, then
$$
G_{M-2}(\tau) = O(\tau^{(M-5)/2} L^{2(M-2)})
$$
and we can conclude.

\end{proof}

\subsection{Constraint estimates}\label{subsec:constraint}

Given the description of 
$$
\partial_t E(|v_n|^2)
$$
in Corollary \ref{cor:descripdtE}, we separate the sum in three parts, in the case $n>1$, either \textbf{Case A} $\sigma, \varphi, T_1, T_2$ are such that for all $\vec k \in C(T_1,T_2,\sigma,k)$, we have that
$$
\partial_t G_{T_1,T_2,\vec k,\vec k_\sigma}^\varphi \neq O(t^{n-2}L^{4n})
$$
or \textbf{Case B} $\sigma, \varphi$ are not such but however
$$
\partial_t G_{T_1,T_2,\vec k,\vec k_\sigma}^\varphi \neq O(t^{n-2}L^{4n})
$$
or \textbf{Case C} 
$$
\partial_t G_{T_1,T_2,\vec k,\vec k_\sigma}^\varphi = O(t^{n-2}L^{4n}).
$$

We first explain why \textbf{Case A} never happens.

\begin{proposition}\label{prop:nocaseA} There does not exist any $(T_1,T_2,\sigma,k)$ such that for all $\vec k \in C(T_1,T_2,\sigma,k)$, we have that
$$
\partial_t G_{T_1,T_2,\vec k,\vec k_\sigma}^\varphi \neq O(t^{n-2}L^{4n})
$$
\end{proposition}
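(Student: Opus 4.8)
The plan is to read off the size of $\partial_t G_{T_1,T_2,\vec k,\vec k_\sigma}^{\varphi_1,\varphi_2}$ from Propositions \ref{prop:GMpair} and \ref{prop:GMimpair}, to see that exceeding the threshold $O(t^{n-2}L^{4n})$ forces an exact pairwise cancellation among the frequencies $\Omega_{T_j,\vec k}$, and finally to observe that one such relation can never hold for \emph{every} $\vec k\in C(T_1,T_2,\sigma,k)$, which is precisely what Case A would demand.

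First I would differentiate. Since $0$ is maximal in $N(T_j)$ for $R_{T_j}$, any $\varphi_j\in\mathfrak S_{T_j}$ sends the root to the last slot, so Leibniz and the fundamental theorem of calculus give
\[
\partial_t G=e^{-i\Omega_{T_1,\vec k}(0)t}\,\overline{(G_{n-1})_1}\,(G_n)_2+e^{i\Omega_{T_2,\vec k_\sigma}(0)t}\,\overline{(G_n)_1}\,(G_{n-1})_2,
\]
where $(G_n)_j=F_{T_j}^{\varphi_j}$ and $(G_{n-1})_j$ is its restriction to the $n-1$ non-root nodes; each factor is an integral of the kind treated in Propositions \ref{prop:GMpair}, \ref{prop:GMimpair} with $M=n$, resp.\ $M=n-1$. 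The generic bounds there give $(G_n)_j=O(L^{2n}t^{\lfloor(n-1)/2\rfloor})$ and $(G_{n-1})_j=O(L^{2(n-1)}t^{\lfloor(n-2)/2\rfloor})$, so, since $\lfloor(n-1)/2\rfloor+\lfloor(n-2)/2\rfloor=n-2$, each product is $O(L^{4n-2}t^{n-2})$. Only a rise of the time power past $n-2$ can break the claimed bound, and, because an odd-length factor never raises its time power (cases 2--5 of Proposition \ref{prop:GMimpair} keep it at $(M-1)/2$), this rise must come from the \emph{unique} even-length factor among $(G_n)_j,(G_{n-1})_j$ being at full pairwise resonance, i.e.\ $\Omega_{T_j}(\varphi_j(2i-1))+\Omega_{T_j}(\varphi_j(2i))=0$ for all consecutive pairs.

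The structural remark I rely on is that the second-to-last node $\varphi_j(n-1)$ is necessarily a child of the root: its parent must sit in a later slot, and the only later slot is the last, held by the root. Hence the full resonance of the even factor contains a \emph{top} relation $\Omega_{T_j,\vec k}(\ell_1)+\Omega_{T_j,\vec k}(\ell_2)=0$ in which $\ell_2$ is a root child and $\ell_1$ is either another root child or a child of $\ell_2$. Now each $\Omega_{T_j,\vec k}(\ell)=\big(\sum_{m=1}^5(-1)^{m+1}k_{(\ell,m)}\big)^2-\sum_{m=1}^5(-1)^{m+1}k_{(\ell,m)}^2$ carries the cross terms $2\sum_{m<m'}(-1)^{m+m'}k_{(\ell,m)}k_{(\ell,m')}$, whose factors are linear forms in the leaves of pairwise disjoint subtrees. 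Thus $\Omega_{T_j,\vec k}(\ell_2)$ contains nonzero cross monomials supported on leaves outside the subtree of $\ell_1$ (when $\ell_1$ is a child of $\ell_2$ one uses that $\ell_2$ has four further children; when $\ell_1,\ell_2$ are sibling root children their frequencies live in disjoint variables and neither is the zero polynomial). Since $\Omega_{T_j,\vec k}(\ell_1)$ is supported on $\ell_1$'s subtree only, it cannot cancel these monomials, so the top relation is \emph{not} a polynomial identity on the hyperplane $\{\sum_j(-1)^{j+1}k_j=k\}$ as soon as $n\geq 2$.

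Finally I would conclude. The set $C(T_1,T_2,\sigma,k)$ is full-dimensional in that hyperplane, being cut out by the strict separation conditions ($|\Omega_{T_j,\vec k}(\ell)|\geq\nu^{-1}$ at every node, together with the $\mu^{-1}$ condition) and the $\sigma$-matching; hence it is not contained in the proper quadratic subvariety where the top relation of $T_1$ holds, nor in the one for $T_2$. Choosing $\vec k\in C(T_1,T_2,\sigma,k)$ outside both subvarieties, neither even factor of $\partial_t G$ attains full resonance, so each summand stays $O(t^{n-2}L^{4n})$ and therefore so does their sum — note that no cancellation between the two summands has to be invoked, since we directly destroy the resonance. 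Thus bigness fails at that $\vec k$, and Case A cannot occur. The step I expect to be the genuine obstacle is the necessity direction compressed above: a careful walk through the subcases of Proposition \ref{prop:GMimpair}, verifying that no odd-length factor and no sub-maximally resonant even factor can push $\partial_t G$ beyond $O(t^{n-2}L^{4n})$, so that bigness really does force the full, root-including resonance whose top relation the previous paragraph then breaks.
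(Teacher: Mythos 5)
Your proof is correct and follows essentially the same route as the paper's: differentiate $G$, observe via Propositions \ref{prop:GMpair} and \ref{prop:GMimpair} that exceeding $O(t^{n-2}L^{4n})$ forces the exact pairwise resonances $\Omega(\varphi(2j-1))+\Omega(\varphi(2j))=0$ in the even-length factor, and then note that such a relation between two distinct quadratic forms cannot hold at every $\vec k\in C(T_1,T_2,\sigma,k)$. Your cross-term analysis of the top resonance relation and your explicit choice of $\vec k$ avoiding \emph{both} resonance varieties simply make precise two points the paper's proof compresses into ``two different second order polynomials''.
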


\begin{proof} Let $T_1, T_2\in \mathcal T_n $, $\sigma \in \mathfrak{S}_{4n+1} $, $\varphi_1 \in \mathfrak S_{T_1}$, $\varphi_2 \in \mathfrak{S}_{T_2}$ and $k\in \Z/L$. Assume that for all $\vec k \in C(T_1,T_2,\sigma,k)$, we have 
\[
\partial_t G_{T_1,T_2,\vec k,\vec k_\sigma}^\varphi \neq O(t^{n-2}L^{4n})
\] 
and let us prove that it yields to a contradiction.

We recall that
\[
\partial_t G_{T_1,T_2,\vec k,\vec k_\sigma}^\varphi = \overline{\partial_t F_{T_1,\vec k}^{\varphi_1}} \; F_{T_2,\vec k_\sigma}^{\varphi_2} +  \overline{ F_{T_1,\vec k}^{\varphi_1}} \; \partial_t F_{T_2,\vec k_\sigma}^{\varphi_2}.
\]

\textbf{Case 1} : the integer $n$ is even. 

The maximal order for $t$ in $F_{T_2,\vec k_\sigma}^{\varphi_2}$ is $\frac{n}2$ and for $t$ in $\overline{\partial_t F_{T_1,\vec k}^{\varphi_1}}$ is $\frac{n-2}{2}$ for a total order of
\[
n-1.
\]
The situation for the other term in $\partial_t G$ is symmetric.

Lower orders are 
\[
\mathcal O (L^{4n-2}t^{n-2}).
\]

For $F_{T_2,\vec k_\sigma}^{\varphi_2}$ to be of order $\frac{n}2$ in $t$, one needs to be in the situation that for $j=1$ to $\frac{n}{2}$,
\[
\Omega_{T_2,\vec k_\sigma} (\varphi_2(2j-1)) + \Omega_{T_2,\vec k_\sigma} (\varphi_2(2j)) = 0.
\]
But $\Omega_{T_2,\vec k_\sigma} (\varphi (2j-1))$ and $\Omega_{T_2,\vec k_\sigma} (\varphi_2(2j))$ are two different second order polynomials in $\vec k$ and therefore their sum cannot be identically $0$ for all $\vec k \in C(T_1,T_2,\sigma,k)$, which yields to a contradiction.

\textbf{Case 2} : the integer $n$ is odd. 

The maximal order for $t$ in $F_{T_2,\vec k_\sigma}^{\varphi_2}$ is $\frac{n-1}2$ and for $t$ in $\overline{\partial_t F_{T_1,\vec k}^{\varphi_1}}$ is $\frac{n-1}{2}$ for a total order of
\[
n-1.
\]
The situation for the other term in $\partial_t G$ is symmetric.

Lower orders are 
\[
\mathcal O (L^{4n-2}t^{n-2}).
\]

For $\overline{\partial_t F_{T_1,\vec k}^{\varphi_1}}$ to be of order $\frac{n-1}2$ in $t$, one needs to be in the situation that for $j=1$ to $\frac{n-1}{2}$,
\[
\Omega_{T_1,\vec k} (\varphi_1 (2j-1)) + \Omega_{T_1,\vec k}(\varphi_1(2j)) = 0.
\]
But $\Omega_{T_1,\vec k}(\varphi_1 (2j-1))$ and $\Omega_{T_1,\vec k}(\varphi_1(2j))$ are two different second order polynomials in $\vec k$ and therefore their sum cannot be identically $0$ for all $\vec k \in C(T_1,T_2,\sigma,k)$, which yields to a contradiction.
\end{proof}

\begin{proposition}\label{prop:CaseB} Let $T_1, T_2\in \mathcal T_n $, $\sigma \in \mathfrak{S}_{4n+1} $, $\varphi_1 \in \mathfrak S_{T_1}$, $\varphi_2 \in \mathfrak{S}_{T_2}$ and $k\in \Z/L$. Set
\[
V(T_1,T_2,\sigma,k,\varphi_1,\varphi_2) = \frac1{(4\pi L)^n}\sum_{\vec k \in C(T_1,T_2,\sigma,k)} \partial_t G_{T_1,T_2,\vec k, \vec k_\sigma}^{\varphi_1,\varphi_2} (t) \overline{A_{\vec k}} A_{\vec k_\sigma} \E(\overline{g_{\vec k}}g_{\vec k_\sigma}).
\]
We have for all $\alpha >0$,
\[
V(T_1,T_2,\sigma,k,\varphi_1,\varphi_2) = O_{a,T_1,T_2,\varphi_1,\varphi_2,\sigma,\alpha} \Big[ t^{n-1} \nu^{(1+\alpha)n/2} L^{-(n-1)/2}\Big] + O_{a,n} (L^{4n-2}t^{n-2}).
\]
\end{proposition}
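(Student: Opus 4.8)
The plan is to single out, inside the sum defining $V$, the sub-collection of $\vec k$ for which $\partial_t G^{\varphi_1,\varphi_2}_{T_1,T_2,\vec k,\vec k_\sigma}$ attains its maximal time order $t^{n-1}$, and to show that this sub-collection lies on a proper quadric and therefore carries few lattice points. I would begin by recalling that $F^{\varphi_1}_{T_1,\vec k}$ and $F^{\varphi_2}_{T_2,\vec k_\sigma}$ are exactly the integrals $G_M$ of Propositions \ref{prop:GMpair} and \ref{prop:GMimpair} with $M=n$ and frequencies the node values $\Omega_{T_i,\cdot}(\varphi_i(\cdot))$. Writing $\partial_t G = \overline{\partial_t F^{\varphi_1}_{T_1,\vec k}}\,F^{\varphi_2}_{T_2,\vec k_\sigma} + \overline{F^{\varphi_1}_{T_1,\vec k}}\,\partial_t F^{\varphi_2}_{T_2,\vec k_\sigma}$, those propositions tell us that, for a \emph{fixed} $\vec k$, the order $t^{n-1}$ is reached only when the pairing resonances $\Omega(\varphi_i(2j-1))+\Omega(\varphi_i(2j))=0$ hold exactly (the relevant parity case being selected by whether $n$ is even or odd); otherwise each factor drops a power of $t$ and the product is $O(L^{4n}t^{n-2})$.

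Accordingly I would split $C(T_1,T_2,\sigma,k)$ into the resonant set $R$, on which all the equalities needed for maximal order hold, and its complement. On the complement $\partial_t G=O(L^{4n}t^{n-2})$; since $A_{\vec k}$ and $\E(\overline{g_{\vec k}}g_{\vec k_\sigma})$ are bounded uniformly in $L$ and, after the prefactor, the book-keeping of the remaining free momenta supplies the missing powers of $L$, this part is absorbed into the second error term $O_{a,n}(L^{4n-2}t^{n-2})$. The entire content of the statement is therefore the bound on the resonant part.

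For the resonant part the decisive input is Proposition \ref{prop:nocaseA}: since we are in \textbf{Case B} and not in \textbf{Case A}, at least one of the equalities defining $R$ is \emph{not} a polynomial identity in $\vec k$, i.e. it is a genuinely nonzero quadratic form required to vanish, so $R$ is contained in a proper subvariety. On $R$ the leading coefficient of $\partial_t G$ is a product of inverse node frequencies $\prod 1/\Omega$, and the defining lower bounds $|\Omega_{\text{node}}|\ge \nu^{-1}$ and $|k_{\cdots}-\cdots|\ge\mu^{-1}$ of the constraint set keep every denominator away from zero, so each such factor is $\le\nu$. It then remains to count $|R|$: the momenta run over $\Z/L$ inside the fixed compact support of $a$, giving $\sim L$ admissible values per leaf; the root-momentum identity and each non-identity quadratic resonance each cut the count by a factor $\sim L$, so that one loses the naive size by the gain $L^{-(n-1)/2}$. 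I would organize this so that each of the $\sim n/2$ resonance pairings supplies simultaneously one inverse frequency ($\le\nu$) and one divisor-type counting loss ($\le\nu^{\alpha}$), which assembles into the factor $\nu^{(1+\alpha)n/2}$; combined with the prefactor and the reduced count this produces $O\big(t^{n-1}\nu^{(1+\alpha)n/2}L^{-(n-1)/2}\big)$.

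The main obstacle is precisely this last, quantitative lattice-point count on the resonant quadric. The delicate point, stressed already in the paper's discussion of $C_L(k)$, is that requiring $\Omega=0$ guarantees only $|\Omega|\ge L^{-2}$, so the near-resonant points cannot be discarded crudely; instead one must sum $1/|\Omega|$ over dyadic shells $|\Omega|\sim 2^{-\ell}$ ranging down to the cutoff $\nu^{-1}$, counting the lattice points of each shell by a uniform estimate for the underlying quadratic forms. This is exactly where the cutoff $\nu$ in the nonlinearity and the assumption $\nu^{1+\alpha}=o(L^{1/2})$ are consumed, and it is the genuinely hard step; the rest is the parity bookkeeping of Propositions \ref{prop:GMpair}--\ref{prop:GMimpair} and the uniform boundedness of $A_{\vec k}$ and $\E(\overline{g_{\vec k}}g_{\vec k_\sigma})$.
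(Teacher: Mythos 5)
Your overall skeleton matches the paper's: split $\partial_t G$ into its two terms, push the non-resonant $\vec k$ into the $O_{a,n}(L^{4n-2}t^{n-2})$ error via Propositions \ref{prop:GMpair} and \ref{prop:GMimpair}, and then bound the resonant contribution by combining the cutoffs $|\Omega|\ge\nu^{-1}$, $|\bar k|\ge \mu^{-1}$ with a lattice-point count on the resonance variety. But the proposal stops exactly where the proof actually lives, and the sketch you give for that step is both incomplete and internally inconsistent. The resonant set is the \emph{intersection} of two families of constraints, $\Omega_{T_1,\vec k}(\varphi_1(2j-1))+\Omega_{T_1,\vec k}(\varphi_1(2j))=0$ and $\Omega_{T_2,\vec k_\sigma}(\varphi_2(2j-1))+\Omega_{T_2,\vec k_\sigma}(\varphi_2(2j))=0$, expressed through two different tree structures, the permutation $\sigma$, and two different orderings; you never say how you would count points on this intersection. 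The paper sidesteps the joint count by Cauchy--Schwarz, bounding the sum over $\underline C^1\cap\underline C^2$ by the product of two sums, each seeing only one family of constraints --- and this is precisely why the gain is only $L^{-(n-1)/2}$ rather than $L^{-(n-1)}$. Your own accounting (``each resonance cuts the count by a factor $\sim L$'', with roughly $n-1$ resonances in total) would produce $L^{-(n-1)}$, which is not the stated bound; the mismatch signals that you have not decided which constraints you can actually exploit simultaneously.

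Second, the counting itself is not carried out by a general uniform lattice-point estimate on quadrics summed over dyadic shells, and it is not clear such an estimate is available for the coupled system at hand. The paper's mechanism is an explicit injective linear reparametrization $\vec k\mapsto \vec l=(l_1(j),l_2(j),l_3(j),l_4(j))_j$ following the tree from the root down, under which each $\Omega_j$ becomes an affine function of a single fresh variable $l_4(j)$ with slope $-2\bar l(j)$; the bound $|\bar l(j)|\ge\mu^{-1}$ guarantees that each constraint determines that variable (this is where $\mu$ is consumed, not merely to ``keep denominators away from zero''). After writing $|\Omega|^{-2}\le \nu^{1+\alpha}|\Omega|^{-(1-\alpha)}$, the remaining $7n/2$ free variables give a Riemann sum of $\prod_j|\Omega_{2j}|^{-(1-\alpha)}$, which is integrable on compacts provided one integrates in the correct order along the tree. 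Without some such concrete resolution of the constraint system, your dyadic-shell plan has no handle on how the $\sim n/2$ coupled quadratic constraints interact, so the crucial factor $\nu^{(1+\alpha)n/2}L^{-(n-1)/2}$ remains unproved. (Incidentally, Proposition \ref{prop:nocaseA} is not what makes the resonance equations nontrivial here; in the paper that role is played by the affine dependence of $\Omega_j$ on $l_4(j)$ with nonvanishing slope.)
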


\begin{proof} In all the proof, we omit the dependence in $T_1,T_2,\sigma,k,\varphi_1,\varphi_2$. 

We write 
\[
\underline{C} = \{\vec k \in C \; |\; \overline{\partial_t F_{T_1,\vec k}^{\varphi_1}}F_{T_2,\vec k_\sigma}^{\varphi_2} \neq O_n (L^{4n-2}t^{n-2})\}
\]
and 
\[
\tilde C = \{\vec k \in C \; |\; \overline{ F_{T_1,\vec k}^{\varphi_1}}\partial_t F_{T_2,\vec k_\sigma}^{\varphi_2} \neq O_n (L^{4n-2}t^{n-2})\}.
\]
We set
\[
W = \frac1{(4\pi L)^n}\sum_{\vec k \in \underline C}\overline{\partial_t F_{T_1,\vec k}^{\varphi_1}}F_{T_2,\vec k_\sigma}^{\varphi_2} \overline{A_{\vec k}} A_{\vec k_\sigma} \E(\overline{g_{\vec k}}g_{\vec k_\sigma})
\] 
and
\[
W' = \frac1{(4\pi L)^n}\sum_{\vec k \in \tilde C}\overline{ F_{T_1,\vec k}^{\varphi_1}}\partial_t F_{T_2,\vec k_\sigma}^{\varphi_2} \overline{A_{\vec k}} A_{\vec k_\sigma} \E(\overline{g_{\vec k}}g_{\vec k_\sigma})
\]

Since for all $\vec k \in C$, 
\[
\Big| \overline{A_{\vec k}} A_{\vec k_\sigma} \E(\overline{g_{\vec k}}g_{\vec k_\sigma})\Big| \leq (4n+1)!\prod_{j=1}^{4n+1} |a(k_j)|^2 
\]
and since $\prod_{j=1}^{4n+1} |a(k_j)|^2 $ is integrable on 
\[
\{ \vec k \; |\; \sum_{j=1}^{4n+1}k_j (-1)^{j+1} = k\}
\]
we get that
\[
V = W + W' + \mathcal O_{n,a} (L^{4n-2}t^{n-2}).
\]
The estimate on $W'$ being symmetric, we only estimate $W$.

\textbf{Case 1} : the integer $n$ is even. 

Let $\vec k\in \underline C$, we have 
\[
F_{T_2,\vec k_\sigma}^{\varphi_2} = \frac{t^{n/2}}{(n/2)!} \prod_{j=1}^{n/2} \frac1{i\Omega_{T_2,\vec k_\sigma} (\varphi_2(2j))} + O_n(L^{2n}t^{n/2-1})
\]
and 
\[
\Big| \partial_t F_{T_1,\vec k}^{\varphi_1}\Big| \leq 2 \frac{t^{(n-2)/2}}{((n-2)/2)!} \prod_{j=0}^{(n-2)/2} \frac1{|\Omega_{T_1,\vec k} (\varphi_1(2j+1))|} +  O_n(L^{2n-2}t^{n/2-2}).
\]
What is more, $\vec k$ belongs to $\underline C^2 \cap \underline C^1$ with 
\[
\underline C^2 = \{\vec k \in C \; |\; \forall j=1,\hdots, \frac{n}{2}, \quad \Omega_{T_2,\vec k_\sigma } (\varphi_2(2j-1)) + \Omega_{T_2,\vec k_\sigma}(\varphi_2(2j)) = 0\}
\]
and 
\[
\underline C^1 = \underline{C_1}^1 \cup \underline{C_2}^1 \cup \underline{C_3}^1
\]
where
\[
\underline{C_1}^1 = \{\vec k \in C \; |\; \forall j=1,\hdots, \frac{n-2}{2}, \quad \Omega_{T_1,\vec k }(\varphi_1 (2j-1)) + \Omega_{T_1,\vec k}(\varphi_1(2j)) = 0\},
\]
\[
\underline{C_2}^1 = \{\vec k \in C \; |\; \forall j=1,\hdots, \frac{n-2}{2}, \quad \Omega_{T_1,\vec k } (\varphi_1(2j+1)) + \Omega_{T_1,\vec k}(\varphi_1(2j)) = 0\},
\]
and finally
\[
\underline{C_3}^1 = \left \lbrace \vec k \in C\; |\; \exists j_0,
\begin{array}{c} \forall j=j_0+1,\hdots, \frac{n-2}{2},\quad  \Omega_{T_1,\vec k }(\varphi_1 (2j+1)) + \Omega_{T_1,\vec k}(\varphi_1(2j)) =0 \\
\Omega_{T_1,\vec k }(\varphi_1 (2j_0+1) )+ \Omega_{T_1,\vec k}(\varphi_1(2j_0))+\Omega_{T_1,\vec k}(\varphi_1(2j_0-1)) = 0\\
\forall j = 1,\hdots , j_0-1, \quad \Omega_{T_1,\vec k }(\varphi_1 (2j-1)) + \Omega_{T_1,\vec k}(\varphi_1(2j)) = 0
\end{array}  \right\rbrace .
\]
The sets $\underline{C_1}^1$, $\underline{C_2}^1$ and $\underline{C_3}^1$ corresponds to the different cases in Proposition \ref{prop:GMimpair}.

Because of the estimate on $\overline{\partial_t F_{T_1,\vec k}^{\varphi_1}}F_{T_2,\vec k_\sigma}^{\varphi_2}$, we get
\begin{multline*}
W \leq \\
2\frac{(4n+1)!}{((n-2)/2)!(n/2)!}t^{n-1} \frac1{(4\pi L)^n}\sum_{\vec k \in \underline C}  \prod_{j=0}^{(n-2)/2} \frac1{|\Omega_{T_1,\vec k}(\varphi_1 (2j+1))|}\prod_{j=1}^{n/2} \frac1{|\Omega_{T_2,\vec k_\sigma} (\varphi_2(2j))|} \prod_{j=1}^{4n+1}|a(k_j)|^2 \\
+ \mathcal O_{n,a} (L^{4n-2}t^{n-2}).
\end{multline*}

By Cauchy-Schwarz inequality, we get
\[
W \leq C_n t^{n-1}W_1 W_2 + \mathcal O_{n,a} (L^{4n-2}t^{n-2})
\]
where $C_n$ is a constant depending only on $n$,
\[
W_1^2 = \frac1{(4\pi L)^n}\sum_{\vec k \in \underline C^1}  \prod_{j=0}^{(n-2)/2} \frac1{|\Omega_{T_1,\vec k}(\varphi_1 (2j+1))|^2} \prod_{j=1}^{4n+1}|a(k_j)|^2
\]
and 
\[
W_2^2 = \frac1{(4\pi L)^n}\sum_{\vec k \in \underline C^2}  \prod_{j=1}^{n/2} \frac1{|\Omega_{T_2,\vec k_\sigma}(\varphi_2 (2j))|^2} \prod_{j=1}^{4n+1}|a(k_j)|^2.
\]

We estimate $W_2$. For each $j=1$ to $n$, and for each $m=1$ to $5$, we write
\[ 
l_m(j) = k_{T_2,\vec k_\sigma} (s_m(\varphi_2 (j))
\]
where $s_m : N(T_2) \rightarrow \tilde N(T_2)$ is defined by induction by $s_m (0) = (m,0)$ and $s_m(m_1,l) = (m_1,s_m(l))$. In other words $s_m(j)$ is the label of the $m$th subnode or leaf of the node indexed by $j$. 

We also write
\[ 
l(j) = l_1(j) - l_2(j) + l_3(j) - l_4(j) + l_5(j) \quad \textrm{and} \quad \bar l(j) = l(j) - l_1(j) + l_2(j)-l_3(j).
\]

We have that $S_{T_2,\sigma,\varphi_2} : \vec k \mapsto \vec l = ((l_1(j),l_2(j),l_3(j),l_4(j))_{1\leq j\leq n}$ is linear and injective. Indeed, if $((l_1(j),l_2(j),l_3(j),l_4(j))_{1\leq j\leq n}$ is fixed, then, since $l(n) = k_{T_2,\vec k_{\sigma}}(0) = k$, we get that $l_5(n)$ is fixed. Now, since $l(n-1)$ is one of the $l_m(n)$ for $m=1$ to $5$, we get that $l_5(n-1)$ is fixed. By going down the tree, we get to know the full $(l_m(j))_{m,j}$ for $m=1$ to $5$ and $j =1$ to $n$. In particular we know the labels of the leaves for all the nodes at the bottom of the tree. We know the labels of the leaves, in other words, $\vec k_\sigma$ and knowing $\sigma$, we know $\vec k$. 

We set $\Omega_j(\vec l) = \Omega_{T_2,\vec k_\sigma}(\varphi_2(j))$. 

The image of $\underline C^2$ by $S_{T_2,\sigma,\varphi_2}$ is included in 
\begin{multline*}
\underline S^2 = \{ \vec l \in (\Z/L)^{4n} \; |\; \forall j=1,\hdots, n ,\quad |\bar l(j)|\geq \mu^{-1}, \\
 |\Omega_j(\vec l)|\geq \nu^{-1} ,\quad \forall j=1,\hdots,\frac{n}{2}, \quad \Omega_{2j-1}(\vec l) + \Omega_{2j}(\vec l) = 0\}.
\end{multline*}

We get, with the new notations
\[
W_2^2 \leq \frac{\nu^{(1+\alpha)n/2)}}{(2\pi L)^{4n}} \sum_{\vec l \in \underline S^2} \prod_{j=1}^{n/2} \frac1{|\Omega_{2j}(\vec l)|^{1-\alpha}} \prod_{j=1}^{4n+1} |a(S_{T_2,\sigma,\varphi_2}(\vec l)_j|^2
\]
with the convention $S_{T_2,\sigma,\varphi_2}(\vec l)_j = k_j$. Because $a$ has compact support, so has $a\circ S_{T_2,\sigma}$ and therefore, there exists $K = K(a,\sigma,T_2,\varphi_2)$ such that 
\[
W_2^2 \leq \max |a|^{2(4n+1)} \frac{\nu^{(1+\alpha)n/2)}}{(2\pi L)^{4n}} \sum_{\vec l \in \underline S^2 \cap [-K,K]^{4n}} \prod_{j=1}^{n/2} \frac1{|\Omega_{2j}(\vec l)|^{1-\alpha}}.
\]

We count the degrees of freedom in $\underline S^2$. We start by fixing $l_1(n),l_2(n),l_3(n),l_4(n)$. As we remarked earlier, this fixes automatically $l_5(n)$ and therefore, $l(n-1)$, and $\Omega_n(\vec l)$. Because we are in $\underline S^2$, this fixes too $\Omega_{n-1}(\vec l)$. 

We then fix arbitrarily $l_1(n-1),l_2(n-1)$ and $l_3(n-1)$, which fixes automatically $\bar l(n-1)$. We recall that $\Omega_{n-1}(\vec l)$ is fixed and that
\[
\Omega_{n-1}(\vec l) = l^2(n-1) - l_1^2(n-1) + l_2^2(n-1) - l_3^2(n-1) - \bar l(n-1) - 2\bar l(n-1) l_4(n-1).
\]
Therefore, this fixes $l_4(n-1)$ and in turn $l_5(n-1)$. 

Going down the tree, we get that fixing $(l_1(2j),l_2(2j),l_3(2j),l_4(2j), l_1(2j-1),l_2(2j-1),l_3(2j-1))$ for $j=n/2$ to $1$ is sufficient to recover the whole $\vec l$. Therefore, we have 
\[
W_2^2\leq C_a \frac{\nu^{(1+\alpha)n/2)}}{(2\pi L)^{4n}} \sum_{\vec l \in (\Z/L \cap [-K,K])^{7n/2}} \prod_{j=1}^{n/2} \frac1{|\Omega_{2j}(\vec l)|^{1-\alpha}}.
\]
Because
\[
\Omega_{2j}(\vec l) = -2 \bar l(2j) \Big( l_4(2j) - \frac1{2\bar l(2j)}(l^2(2j) - l_1^2(2j)+ l_2^2(2j) - l_3^2(2j)-\bar l^2(2j)\Big),
\]
By integrating in the following order $l_3(1),l_2(1),l_1(1),l_4(2),l_3(2),l_2(2),l_1(2),\hdots, l_3(n-1),l_2(n-1),l_1(n-1), l_4(n),l_3(n),l_2(n),l_1(n)$, we get that
\[
 \prod_{j=1}^{n/2} \frac1{|\Omega_{2j}(\vec l)|^{1-\alpha}}
\]
is integrable on compacts and therefore
\[
W_2^2 \lesssim_{a,T_2,\sigma,\varphi_2}  \frac{\nu^{(1+\alpha)n/2)}}{L^{n/2}}.
\]

For $W_1$, there are in each subset forming $\underline C^1$, $\frac{n-2}{2}$ constraint estimates on the $\Omega$s, while we integrate $\frac{n}2$ different $\Omega^{-2}$s. Therefore, by applying the same strategy as for $W_2$, we get
\[
W_1^2\lesssim_{a,T_1,\varphi_1}  \frac{\nu^{(1+\alpha)n/2}}{L^{(n-2)/2}}.
\]

We deduce 
\[
W = \mathcal O_{a,T_1,T_2,\sigma,\varphi_1,\varphi_2,n}\Big(   \frac{\nu^{(1+\alpha)n/2)}}{L^{(n-1)/2}}t^{n-1}\Big) + \mathcal O_{n,a}(L^{4n-2}t^{n-2})
\]
which yields the result when $n$ is even.

\textbf{Case 2 :} $n$ is odd.

Let $\vec k \in \underline C$. We have
\[
|F_{T_2,\vec k_\sigma}^{\varphi_2}| \leq 2 \frac{t^{(n-1)/2}}{((n-1)/2)!} \prod_{j=0}^{(n-1)/2}\frac1{|\Omega_{T_2,\vec k_\sigma}(\varphi_2(2j+1))|} + \mathcal O_n{L^{2n}t^{(n-1)/2}-1}
\]
and 
\[
|\partial_t F_{T_1,\vec k}^{\varphi_1} | \leq  \frac{t^{(n-1)/2}}{((n-1)/2)!} \prod_{j=1}^{(n-1)/2}\frac1{|\Omega_{T_1,\vec k}(\varphi_1(2j))|} + \mathcal O_n{L^{2n}t^{(n-1)/2}-1}.
\]
What is more, $\vec k$ belongs to $\underline C^1 \cap \underline C^2$ with
\[
\underline C^1 = \{ \vec k \in C \; | \; \forall j=1,\hdots ,\frac{n-1}{2}, \quad \Omega_{T_1,\vec k}(\varphi_1(2j-1)) + \Omega_{T_1,\vec k}(\varphi_1(2j)) = 0\}
\]
and 
\[
\underline C^2 = \underline{C_1}^2 \cup \underline{C_2}^2 \cup \underline{C_3}^2
\]
where
\[
\underline{C_1}^2 = \{ \vec k \in C \; | \; \forall j=1,\hdots ,\frac{n-1}{2}, \quad \Omega_{T_2,\vec k_\sigma}(\varphi_2(2j-1)) + \Omega_{T_2,\vec k_\sigma}(\varphi_2(2j)) = 0\},
\]
\[
\underline{C_2}^2 = \{ \vec k \in C \; | \; \forall j=1,\hdots ,\frac{n-1}{2}, \quad \Omega_{T_2,\vec k_\sigma}(\varphi_2(2j+1)) + \Omega_{T_2,\vec k_\sigma}(\varphi_2(2j)) = 0\},
\]
and finally
\[
\underline{C_3}^2 = \left \lbrace \vec k \in C \; | \; \exists j_0, \quad \begin{array}{c}
\forall j=j_0+ 1,\hdots ,\frac{n-1}{2}, \quad \Omega_{T_2,\vec k_\sigma}(\varphi_2(2j+1)) + \Omega_{T_2,\vec k_\sigma}(\varphi_2(2j)) = 0,\\
\Omega_{T_2,\vec k_\sigma}(\varphi_2(2j_0-1)) + \Omega_{T_2,\vec k_\sigma}(\varphi_2(2j_0))+\Omega_{T_2,\vec k_\sigma}(\varphi_2(2j_0+1))=0,\\
\forall j=1,\hdots ,j_0-1, \quad \Omega_{T_2,\vec k_\sigma}(\varphi_2(2j-1)) + \Omega_{T_2,\vec k_\sigma}(\varphi_2(2j)) = 0
\end{array}\right \rbrace .
\]

We get as previously by Cauchy-Schwarz inequality,
\[
W\leq C_n W_1 W_2 t^{n-1} + \mathcal O_{n,a}(L^{4n-2} t^{n-2})
\]
with
\[
W_1^2 = \frac1{(2\pi L)^{4n}} \sum_{\vec k \in \underline C^1}\prod_{j=1}^{(n-1)/2}\frac1{|\Omega_{T_1,\vec k}(\varphi_1(2j))|} \prod_{j=1}^{4n+1}|a(k_j)|^2
\]
and
\[
W_2^2 = \frac1{(2\pi L)^{4n}} \sum_{\vec k \in \underline C^1}\prod_{j=0}^{(n-1)/2}\frac1{|\Omega_{T_2,\vec k_\sigma}(\varphi_2(2j+1))|} \prod_{j=1}^{4n+1}|a(k_j)|^2.
\]

We repeat the same strategy as in the case $n$ even.

For $W_1$, there are $\frac{n-1}2$ $|\Omega|^{-2}$ to integrate and $\frac{n-1}{2}$ constraints estimates. Hence,
\[
W_1^2 \lesssim_{a,T_1,\varphi_1,\alpha} \frac{\nu^{(1+\alpha)(n-1)/2}}{L^{(n-1)/2}}
\]

For $W_2$, there are $\frac{n+1}2$ $|\Omega|^{-2}$ to integrate and $\frac{n-1}{2}$ constraint equations, hence
\[
W_1^2 \lesssim_{a,T_2,\varphi_2,\alpha,\sigma} \frac{\nu^{(1+\alpha)(n+1)/2}}{L^{(n-1)/2}}.
\]

Therefore, we have
\[
W = \mathcal O_{a,T_1,T_2,\varphi_1,\varphi_2,\sigma,\alpha}\Big( \frac{\nu^{(1+\alpha)n/2}}{L^{(n-1)/2}}\Big) + \mathcal O_{a,n} (L^{4n-2}t^{n-2})
\]
which concludes the proof when $n$ is odd.

\end{proof}

\begin{remark}\label{rem:nuNotOptimal} The application of the Cauchy-Schwarz inequality prevents us from being optimal. The worst case scenario we can think of ($n$ odd, $T_1=T_2$, $\varphi_1 = \varphi_2$, $\sigma=Id$) yielding a bound of the form
\[
\frac{\nu^{(1+\alpha)(n-1)/2}}{L^{(n-1)/2}}\nu^\alpha.
\]
\end{remark}

\subsection{Case \texorpdfstring{$n=1$}{n=1}}

We now deal with the case $n=1$.

The set $\mathcal T_1$ is reduced to $1$ element $T = (\bot,\bot,\bot,\bot,\bot)$. We write $C(k) = C_T(k)$ and for all $\sigma \in \mathfrak S_5$, we write $C_\sigma (k) = C(T,T,\sigma)$. We also write $\mathfrak A $ the set of $\sigma \in \mathfrak S_5$ that conserves parity.

\begin{proposition}\label{prop:n=1} We have 
$$
\partial_t \E(|v_1(t,k)|^2) = \sum_{\sigma \in \mathfrak A} \sum_{\vec k \in C_\sigma(k)} \frac{2}{(2\pi L)^4} \frac{\sin(\Delta(\vec k) t )}{\Delta (\vec k)} \prod_{j=1}^5|a(k_j)|^2 + O_a(L^{-1}\nu).
$$
\end{proposition}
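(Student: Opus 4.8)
The plan is to specialize Corollary \ref{cor:descripdtE} to $n=1$ and evaluate every factor explicitly, isolating a diagonal main term and showing the remainder is $O_a(L^{-1}\nu)$. First I would record the structural simplifications at $n=1$: the set $\mathcal T_1$ contains the single tree $T=(\bot,\bot,\bot,\bot,\bot)$, its node set $N(T)=\{0\}$ is a singleton, so $\mathfrak S_T$ consists of one order-preserving bijection and $N_{T_1}=N_{T_2}=N_T$, whence the phase $(-i)^{N_{T_2}-N_{T_1}}=1$. Corollary \ref{cor:descripdtE} then reads
\[
\partial_t\E(|v_1(t,k)|^2)=\frac1{(2\pi L)^4}\sum_{\sigma\in\mathfrak S_5}\sum_{\vec k\in C_\sigma(k)}\partial_t G_{T,T,\vec k,\vec k_\sigma}(t)\,\overline{A_{\vec k}}A_{\vec k_\sigma}\,\E(\overline{g_{\vec k}}g_{\vec k_\sigma}).
\]
Since $N(T)=\{0\}$ and $\Omega_{T,\vec k}(0)=\Delta(\vec k)$, the single-node integral is $F_{T,\vec k}(t)=\int_0^t e^{i\Delta(\vec k)\tau}d\tau=(e^{i\Delta(\vec k)t}-1)/(i\Delta(\vec k))$, which is legitimate because $|\Delta(\vec k)|\ge\nu^{-1}>0$ on $C_\sigma(k)\subseteq C_L(k)$.

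Next I would carry out the algebraic reductions for $\sigma\in\mathfrak A$. Because $\sigma$ preserves parity, reindexing the squares gives $\Delta(\vec k_\sigma)=\Delta(\vec k)$, so $F_{T,\vec k_\sigma}=F_{T,\vec k}$ and $G_{T,T,\vec k,\vec k_\sigma}=|F_{T,\vec k}|^2=2(1-\cos\Delta(\vec k)t)/\Delta(\vec k)^2$; differentiating yields $\partial_t G=2\sin(\Delta(\vec k)t)/\Delta(\vec k)$. The same parity bookkeeping, together with $a(k_j,(-1)^{j+1})$ being $a$ at odd and $\overline a$ at even positions, collapses $\overline{A_{\vec k}}A_{\vec k_\sigma}$ to $\prod_{j=1}^5|a(k_j)|^2$, since $\sigma$ only permutes the odd factors among themselves and the even factors among themselves. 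For the Gaussian factor I would invoke the lemma computing $\E(\overline{g_{\vec k_1}}g_{\vec k_2})$: when the $k_j$ are pairwise distinct the matching bijection is unique and equals $\sigma$, so the expectation vanishes unless $\sigma\in\mathfrak A$, and for $\sigma\in\mathfrak A$ the disjointness of the underlying Gaussian variables turns every $\diamond$ into an ordinary product, giving $g_{\vec k_\sigma}=g_{\vec k}$ and hence $\E(\overline{g_{\vec k}}g_{\vec k_\sigma})=\prod_j\E|g_{Lk_j}|^2=1$ by normalization.

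Then I would split each inner sum along the diagonal $\mathcal D=\{\vec k:k_i=k_j\text{ for some }i\neq j\}$. Off the diagonal only $\sigma\in\mathfrak A$ survive, and each contributes exactly $\frac2{(2\pi L)^4}\sin(\Delta(\vec k)t)/\Delta(\vec k)\prod_j|a(k_j)|^2$, i.e. the claimed main term restricted to $\vec k\notin\mathcal D$. It remains to absorb the diagonal pieces together with all $\sigma\notin\mathfrak A$ pieces into the error. Here I would use the uniform bound $|\overline{A_{\vec k}}A_{\vec k_\sigma}\E(\overline{g_{\vec k}}g_{\vec k_\sigma})|\le 5!\prod_j|a(k_j)|^2$ (as in the proof of Proposition \ref{prop:CaseB}) and $|\partial_t G|\le 2/|\Delta(\vec k)|\le 2\nu$, so every summand is $O_a(\nu)$. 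Since $a$ has compact support and the momentum constraint $\sum_j(-1)^{j+1}k_j=k$ leaves four free coordinates, $C_\sigma(k)$ meets the support in $O(L^4)$ points, while its intersection with $\mathcal D$ contains only $O(L^3)$ points; multiplying the per-term bound $O(\nu)$ by $O(L^3)$ and the prefactor $(2\pi L)^{-4}$ gives $O_a(L^{-1}\nu)$ for each of the finitely many discarded contributions.

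The main obstacle is not any single computation but the clean separation of the diagonal: one must verify that replacing, on $\mathcal D$, the true integrand by the main-term integrand for $\sigma\in\mathfrak A$ and discarding the $\sigma\notin\mathfrak A$ terms altogether each costs only $O_a(L^{-1}\nu)$. This rests precisely on the counting estimate $|C_\sigma(k)\cap\mathcal D\cap\mathrm{supp}(a)|=O(L^3)$ and on the lower bound $|\Delta(\vec k)|\ge\nu^{-1}$, which keeps $\partial_t G$ bounded by $2\nu$ rather than merely by $t$; both are what make the error genuinely $o(1)$ under the regime \eqref{assumptionsRegime}.
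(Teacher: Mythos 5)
Your proposal is correct and follows essentially the same route as the paper: reduce to the single one-node tree, use that parity-preserving $\sigma$ give $\Delta(\vec k_\sigma)=\Delta(\vec k)$ and hence the $2\sin(\Delta(\vec k)t)/\Delta(\vec k)$ main term, and absorb both the diagonal corrections and the non-parity-preserving permutations (whose Gaussian pairings force a coincidence $k_i=k_j$) into the error via the bound $|\partial_tG|\lesssim\nu$ and the codimension-one counting $O(L^3)$ against the prefactor $(2\pi L)^{-4}$. The only cosmetic difference is that the paper organizes the diagonal correction as a single term weighted by $\E(|g_{\vec k}|^2)-1$ rather than as a replacement of the integrand on $\mathcal D$, which amounts to the same estimate.
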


\begin{proof} By definition, we have 
$$
\partial_t \E(|v_1(t,k)|^2) = 2\re \Big[ \sum_{\sigma \in \mathfrak S_5} \sum_{\vec k \in C_\sigma(k)} \frac1{(2\pi L)^4} \frac{e^{i\Delta(\vec k) t} -1 }{i\Delta (\vec k)}e^{-i\Delta(\vec k_\sigma) t} A_{\vec k} \overline{A_{\vec k_\sigma}} \E(g_{\vec k} \overline{g_{\vec k_\sigma}})\Big].
$$

We write
$$
\partial_t \E(|v_1(t,k)|^2) = A + B
$$
with
$$
A = 2\re \Big[ \sum_{\sigma \in \mathfrak A} \sum_{\vec k \in C_\sigma(k)} \frac1{(2\pi L)^4} \frac{e^{i\Delta(\vec k) t} -1 }{i\Delta (\vec k)}e^{-i\Delta(\vec k_\sigma) t} A_{\vec k} \overline{A_{\vec k_\sigma}} \E(g_{\vec k} \overline{g_{\vec k_\sigma}})\Big]
$$
and 
$$
B = 2\re \Big[ \sum_{\sigma \in \mathfrak B} \sum_{\vec k \in C_\sigma(k)} \frac1{(2\pi L)^4} \frac{e^{i\Delta(\vec k) t} -1 }{i\Delta (\vec k)}e^{-i\Delta(\vec k_\sigma) t} A_{\vec k} \overline{A_{\vec k_\sigma}} \E(g_{\vec k} \overline{g_{\vec k_\sigma}})\Big]
$$
where $\mathfrak B$ is the complementary of $\mathfrak A$ in $\mathfrak S_5$.

If $\sigma \in \mathfrak A$, then $\Delta (\vec k) = \Delta(\vec k_\sigma)$, $A_{\vec k} \overline{A_{\vec k_\sigma}} = \prod_{j=1}^5 |a(k_j)|^2$ and $\E(g_{\vec k} \overline{g_{\vec k_\sigma}}) = \E(|g_{\vec k}|^2)$. Therefore
$$
A =   \sum_{\sigma \in \mathfrak A} \sum_{\vec k \in C_\sigma(k)} \frac2{(2\pi L)^4} \frac{\sin(\Delta(\vec k) t)  }{\Delta (\vec k)} \prod_{j=1}^5 |a(k_j)|^2 \E(|g_{\vec k}|^2 ).
$$

Write 
$$
C_{\sigma,\neq} (k)  = \{\vec k \in C_\sigma (k) \; |\; \forall j_1\neq j_2 , k_{j_1} \neq k_{j_2}\}
$$
and $C_{\sigma,=}(k)$ its complementary in $C_\sigma(k)$. For all $\vec k \in C_{\sigma,\neq }(k)$ we have $\E(|g_k|^2) = 1$ and thus
$$
A = A_\neq + A_=
$$
with
$$
A_\neq   =   \sum_{\sigma \in \mathfrak A} \sum_{\vec k \in C_\sigma(k)} \frac2{(2\pi L)^4} \frac{\sin(\Delta(\vec k) t)  }{\Delta (\vec k)} \prod_{j=1}^5 |a(k_j)|^2
$$
and 
$$
A_= =   \sum_{\sigma \in \mathfrak A} \sum_{\vec k \in C_{\sigma,=}(k)} \frac2{(2\pi L)^4} \frac{\sin(\Delta(\vec k) t)  }{\Delta (\vec k)} \prod_{j=1}^5 |a(k_j)|^2( \E(|g_{\vec k}|^2-1) ).
$$

Since $|\E(|g_{\vec k}|^2 -1|\leq 5!$ and $|\Delta(\vec k)| \geq\nu^{-1}$, we get
$$
|A_=| \leq  5 ! \nu \sum_{\sigma \in \mathfrak A} \sum_{\vec k \in C_{\sigma,=}(k)} \frac2{(2\pi L)^4}  \prod_{j=1}^5 |a(k_j)|^2.
$$
Because satisfying ``$\exists j \neq l$ such that $k_j = k_l$" is a constraint independent from $\sum_{j=1}^{5} (-1)^j k_j +k=0$, we get that
$$
A_= = O_a(L^{-1} \nu).
$$

We estimate $B$, we recall that if $\sigma$ does not conserve parity and $\vec k \in C_\sigma(k)$ then $\E(g_{\vec k} \overline{g_{\vec k_\sigma}})\neq 0$ implies $\vec k \in C_{\sigma,=}(k)$. Therefore
$$
B = 2\re \Big[ \sum_{\sigma \in \mathfrak B} \sum_{\vec k \in C_{\sigma,=}(k)} \frac1{(2\pi L)^4} \frac{e^{i\Delta(\vec k) t} -1 }{i\Delta (\vec k)}e^{-i\Delta(\vec k_\sigma) t} A_{\vec k} \overline{A_{\vec k_\sigma}} \E(g_{\vec k} \overline{g_{\vec k_\sigma}})\Big]
$$
Since $|\E(g_{\vec k} \overline{g_{\vec k_\sigma}})| \leq 5!$ and since 
$$
\Big| \frac{e^{i\Delta(\vec k) t} -1 }{i\Delta (\vec k)}e^{-i\Delta(\vec k_\sigma) t}\Big| \leq 2 \nu
$$
we get
$$
|B| \leq 2\cdot 5!\nu \sum_{\sigma \in \mathfrak B} \sum_{\vec k \in C_{\sigma,=}(k)} \frac1{(2\pi L)^4}\prod_{j=1}^5 |a(k_j)|^2 .
$$
Because belonging to $C_{\sigma,=}(k)$ implies two independent linear constraint on $\vec k$, we get
$$
B = O_a(L^{-1}\nu).
$$
\end{proof}

\section{Final limits and proof of the result}\label{sec:limits}

We sum up quickly what we have done so far. Since for all $n>1$, $T_1,T_2 \in \mathcal T_n$,$\sigma\in \mathfrak S_{4n+1}$, $k\in \Z/L$ and $t\in \R$, we have thanks to Proposition \ref{prop:CaseB}
\[
V(T_1,T_2,\sigma,\varphi_1,\varphi_2,k,t) = O_{T_1,T_2,\sigma,\varphi_1,\varphi_2,a,\alpha}(t^{n-1}\nu^{(1+\alpha)n/2}L^{-(n-1)/2}) + O_{n,a}(t^{n-2}L^{4n-2}),
\]
we get
\[
\partial_t \E(|v_{n,L}(k,t)|^2) = O_{n,\alpha,a}(t^{n-1}\nu^{(1+\alpha)n/2}L^{-(n-1)/2}) + O_{n,a}(t^{n-2}L^{4n-2}),
\]

We deduce that for all $t\in \R$, we have that
\[
\varepsilon^{2(n-1)} \partial_t \E(|v_{n,L}(k,t\varepsilon^{-2})|^2) = O_{n,a,t,\alpha}(\nu^{(1+\alpha)n/2}L^{-(n-1)/2}) +O_{n,a,t}(\varepsilon^2L^{4n-2}).
\]
therefore
\[
\sum_{n=2}^N \varepsilon^{2(n-1)} \partial_t \E(|v_{n,L}(k,t\varepsilon^{-2})|^2) = O_{N,a,t}\Big( \frac{\nu^{1+\alpha}}{L^{1/2}}\Big) +O_{N,a,t}(\varepsilon^2 L^{4N-2}).
\]

We recall that thanks to Proposition \ref{prop:n=1}, we have 
$$
\partial_t \E(|v_1(t,k)|^2) = \sum_{\sigma \in \mathfrak A} \sum_{\vec k \in C_\sigma(k)} \frac{2}{(2\pi L)^4} \frac{\sin(\Delta(\vec k) t) }{\Delta (\vec k)} \prod_{j=1}^5|a(k_j)|^2 + O_a(L^{-1}\nu).
$$

We set
$$
I_{\varepsilon,L}(k,t) = \sum_{\sigma \in \mathfrak A} \sum_{\vec k \in C_\sigma(k)} \frac{2}{(2\pi L)^4} \frac{\sin(\Delta(\vec k) t\varepsilon^{-2}) }{\Delta (\vec k)} \prod_{j=1}^5|a(k_j)|^2 .
$$

We introduce test functions.

\begin{proposition} Let $f,g$ be two smooth, compactly supported functions on $\R$. Let $M\geq N\in \N^*$. We have 
\[
\partial_t \E\Big( \an{P_N U_L,f} \an{g,P_M U_L}\Big) = \frac1{2\pi L} \sum_{k\in Z/L} \hat f(k) \overline{\hat g(k)} I_{\varepsilon,L}(k,t) + O_{N,a,k,t,f,g}\Big( \frac{\nu^{1+\alpha}}{\sqrt L} + \varepsilon^2 L^{N-3}\Big).
\]
\end{proposition}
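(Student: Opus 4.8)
The plan is to expand both projected solutions through their Picard/Wiener-chaos series, reduce the product of pairings to a diagonal sum both in Fourier frequency and in chaos degree, read off the leading $n=1$ contribution from Proposition \ref{prop:n=1}, and discard the higher iterates using the estimates collected at the beginning of this section.

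First I would write $P_N U_L = \sum_{n=0}^{N'}\varepsilon^n u_{n,L}(t\varepsilon^{-2})$ and $P_M U_L = \sum_{m=0}^{M'}\varepsilon^m u_{m,L}(t\varepsilon^{-2})$ with $N' = \lfloor(N-1)/4\rfloor \leq M' = \lfloor(M-1)/4\rfloor$, using that $u_{n,L}$ is a chaos of exact degree $4n+1$. Expressing the two pairings through Parseval on $L\T$ and relating the $L\T$-Fourier coefficients to the transforms $\hat f,\hat g$ on $\R$ produces the prefactor $\frac1{2\pi L}$, so that $\E(\an{P_N U_L,f}\an{g,P_M U_L}) = \frac1{2\pi L}\sum_{k,k'}\hat f(k)\overline{\hat g(k')}\,\E(\overline{\widehat{P_N U_L}(k)}\,\widehat{P_M U_L}(k'))$. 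Two orthogonality mechanisms then collapse this double sum: translation invariance of the law of $a_L$ forces $\E(\overline{\hat u_{n,L}(k)}\hat u_{m,L}(k'))=0$ for $k\neq k'$, and the Proposition on $\E(\overline{v_{n_1}}v_{n_2})$ forces the remaining diagonal to vanish unless $n=m$. Since $\overline{\hat u_n}\hat u_m = \overline{v_n}v_m$ at equal frequency, what survives is $\frac1{2\pi L}\sum_k \hat f(k)\overline{\hat g(k)}\sum_{n=0}^{N'}\varepsilon^{2n}\E(|v_{n,L}(k,t\varepsilon^{-2})|^2)$.

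Next I would differentiate in $t$, using $\partial_t = \varepsilon^{-2}\partial_s$ at $s=t\varepsilon^{-2}$. The $n=0$ term is constant in time and drops. The $n=1$ term carries the prefactor $\varepsilon^{2\cdot 1-2}=1$ and equals $[\partial_s\E(|v_{1,L}(k,s)|^2)]_{s=t\varepsilon^{-2}}$, which by Proposition \ref{prop:n=1} is exactly $I_{\varepsilon,L}(k,t) + O_a(L^{-1}\nu)$. For $n\geq 2$ the term is $\varepsilon^{2(n-1)}[\partial_s\E(|v_{n,L}|^2)]_{s=t\varepsilon^{-2}}$; inserting the bound $\partial_s\E(|v_{n,L}|^2) = O(s^{n-1}\nu^{(1+\alpha)n/2}L^{-(n-1)/2}) + O(s^{n-2}L^{4n-2})$ recalled at the start of this section and substituting $s=t\varepsilon^{-2}$ turns this into $O_{n,a,t}(\nu^{(1+\alpha)n/2}L^{-(n-1)/2}) + O_{n,a,t}(\varepsilon^2 L^{4n-2})$.

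It then remains to sum these errors over $n=2,\dots,N'$ and over $k$. Because $\nu^{1+\alpha}=o(L^{1/2})$, the factor $\nu^{(1+\alpha)n/2}L^{-(n-1)/2}=\nu^{(1+\alpha)/2}(\nu^{(1+\alpha)/2}L^{-1/2})^{n-1}$ decreases geometrically, so the first family is dominated by its $n=2$ value $\frac{\nu^{1+\alpha}}{\sqrt L}$; and since $4N'-2\leq N-3$, the second family is dominated by $\varepsilon^2 L^{N-3}$. The leftover $O_a(L^{-1}\nu)$ from the $n=1$ term is absorbed into $\frac{\nu^{1+\alpha}}{\sqrt L}$ since $\nu\to\infty$. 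Finally, all these bounds are uniform in $k$, so pulling them out of the $k$-sum leaves the factor $\frac1{2\pi L}\sum_{k\in\Z/L}|\hat f(k)||\hat g(k)|$, a Riemann sum of $\frac1{2\pi}\int_\R|\hat f||\hat g|$ and hence $O_{f,g}(1)$ because $\hat f,\hat g$ are Schwartz; this yields the claimed remainder $O(\frac{\nu^{1+\alpha}}{\sqrt L}+\varepsilon^2 L^{N-3})$. The hard part is the diagonalization step: one must invoke translation invariance (to kill $k\neq k'$) and the chaos orthogonality proposition (to kill $n\neq m$) together, and keep track of the Fourier normalization so that precisely $\frac1{2\pi L}$ and $\hat f(k)\overline{\hat g(k)}$ emerge; the error bookkeeping, while delicate, is then routine given the regime \eqref{assumptionsRegime}.
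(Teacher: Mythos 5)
Your proposal is correct and follows essentially the same route as the paper: expand $P_NU_L$, $P_MU_L$ in the Picard series, use the orthogonality in both the frequency $k$ and the chaos degree $n$ to diagonalize, identify the $n=1$ term with $I_{\varepsilon,L}(k,t)+O_a(L^{-1}\nu)$ via Proposition \ref{prop:n=1}, and absorb the $n\geq 2$ terms into the stated remainder using the bounds from Proposition \ref{prop:CaseB}. Your bookkeeping of the exponents ($4\lfloor(N-1)/4\rfloor-2\leq N-3$, domination of the first error family by its $n=2$ value) matches what the paper does implicitly.
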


\begin{proof} Set $n_0 = \lfloor \frac{N-1}{4}\rfloor$. We have 
\[
P_N U_L (t) = \sum_{n=0}^{n_0} \varepsilon^n u_{n,L}(\varepsilon^{-2}t),
\]
hence
\[
\an{P_N U_L(t), f} = \sum_{n=0}^{n_0} \varepsilon^n \an{u_{n,L}(\varepsilon^{-2}t),f}.
\]
What is more,
\[
u_{n,L} (t\varepsilon^{-2}) = \sum_{k\in \Z/L} \hat u_{n,L} (t\varepsilon^{-2},k) \frac{e^{ikx}}{\sqrt{2\pi L}}
\]
and we recall that because the support of $a$ is compact, the sum over $k$ is finite. Hence
\[
\an{P_N U_L(t), f} = \frac1{\sqrt{2\pi L}}\sum_{k\in \Z/L} \sum_{n=0}^{n_0}\varepsilon^n \overline{\hat u_{n,L}(t\varepsilon^{-2},k)} \hat f(k).
\]
For the same reasons
\[
\an{g,P_M U_L(t)} = \frac1{\sqrt{2\pi L}}\sum_{k'\in \Z/L} \sum_{m=0}^{m_0}\varepsilon^{m} \hat u_{m,L}(t\varepsilon^{-2},k')\overline{ \hat g(k')}
\]
where $m_0 = \lfloor \frac{M-1}4\rfloor$.

Therefore,
\[
\E\Big( \an{P_N U_L,f} \an{g,P_M U_L}\Big) = \frac1{2\pi L} \sum_{k,k'} \hat f(k) \overline{\hat g(k')}\sum_{n,m} \varepsilon^{n+m}\E( \overline{\hat u_{n,L}(t\varepsilon^{-2},k)}\hat u_{m,L}(t\varepsilon^{-2},k')).
\]
We recall that
\[
\E( \overline{\hat u_{n,L}(t\varepsilon^{-2},k)}\hat u_{m,L}(t\varepsilon^{-2},k'))
\]
is null unless $n=m$ and $k=k'$. Hence,
\[
\E\Big( \an{P_N U_L,f} \an{g,P_M U_L}\Big) = \frac1{2\pi L} \sum_{k\in \Z/L} \hat f(k) \overline{\hat g(k)}\sum_{n=0}^{n_0}\varepsilon^{2n} \E( |\hat u_{n,L}(t\varepsilon^{-2},k)|^2).
\]
We have 
\[
\E( |\hat u_{n,L}(t\varepsilon^{-2},k)|^2) = \E( |v_{n,L}(t\varepsilon^{-2},k)|^2)
\]
hence
\[
\partial_t \E\Big( \an{P_N U_L,f} \an{g,P_M U_L}\Big) = \frac1{2\pi L} \sum_{k\in \Z/L} \hat f(k) \overline{\hat g(k)}\sum_{n=1}^{n_0} \varepsilon^{2(n-1)}\partial_t \E( |v_{n,L}(k)|^2)(t\varepsilon^{-2}).
\]
We recall
\[
\sum_{n=2}^{n_0} \varepsilon^{2(n-1)}\partial_t \E( |v_{n,L}(k)|^2)(t\varepsilon^{-2})= O_{n_0,a,t} \Big( \frac{\nu}{L^{1/2}} + \varepsilon^2L^{4n_0-2}\Big)
\]
hence  
\[
\frac1{2\pi L} \sum_{k\in \Z/L} \hat f(k) \overline{\hat g(k)}\sum_{n=2}^{n_0} \varepsilon^{2(n-1)}\partial_t \E( |v_{n,L}(k)|^2)(t\varepsilon^{-2}) = O_{n_0,a,t,f,g} \Big( \frac{\nu}{L^{1/2}} + \varepsilon^2L^{4n_0-2}\Big).
\]

Finally,
\[
\partial_t \E(|v_{1,L}(k)|^2) (t\varepsilon^{-2})= I_{\varepsilon,L}(k,t) + O_a(L^{-1}\nu)
\]
hence
\[
\frac1{2\pi L} \sum_{k\in \Z/L} \hat f(k) \overline{\hat g(k)}
\partial_t \E(|v_{1,L}(k)|^2) (t\varepsilon^{-2}) = \frac1{2\pi L} \sum_{k\in \Z/L} \hat f(k) \overline{\hat g(k)}I_{\varepsilon,L}(k,t) + O_{a,f,g}(L^{-1}\nu)
\]
which concludes the proof.
\end{proof}

We set 
\[
I_L(t) = \frac1{2\pi L} \sum_{k\in \Z/L} \hat f(k) \overline{\hat g(k)}I_{\varepsilon,L}(k,t).
\]

\subsection{Case \texorpdfstring{$t$}{t} is dyadic}

\begin{proposition}\label{prop:sumtointegral} Assume that $\varepsilon^{-2} =2\pi 2^{L} L^2 + \rho(L)$ and that $t$ is dyadic. We have 
$$
I_{L}(t) = \frac3{4\pi^5} \int_{\vec k \in B }   \frac{\sin(\Delta(\vec k) t\rho }{\Delta (\vec k)} \prod_{j=1}^5|a(k_j)|^2 \delta(k + \sum_{j=1}^5 (-1)^j k_j) \hat f(k) \overline{\hat g(k)}d\vec k + O_a(\frac{\rho^2}{L} + \frac{\rho\mu}\nu + \frac\rho\mu)
$$
where $B = \{(k,k_1,k_2,k_3,k_4,k_5) \in \R^6 \; |\; |k-k_1+k_2-k_3|\geq \frac1{\mu}\}$, where
\[
\Delta(\vec k) = k^2 + \sum_{j=1}^5 (-1)^j k_j^2
\]
and $d\vec k = dk \prod_{j=1}^5 dk_j$.
\end{proposition}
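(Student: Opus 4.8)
The plan is to combine the definition of $I_L(t)$ with Proposition \ref{prop:n=1}, so that
\[
I_L(t) = \frac{1}{2\pi L}\sum_{k\in\Z/L}\hat f(k)\overline{\hat g(k)}\sum_{\sigma\in\mathfrak A}\sum_{\vec k\in C_\sigma(k)}\frac{2}{(2\pi L)^4}\frac{\sin(\Delta(\vec k)t\varepsilon^{-2})}{\Delta(\vec k)}\prod_{j=1}^5|a(k_j)|^2,
\]
and then to proceed in three movements: collapse the oscillation using the arithmetic of $\varepsilon^{-2}$, turn the sum over $\mathfrak A$ into the factor $|\mathfrak A|=12$, and pass from the lattice sum to the integral over $B$ by a Riemann-sum argument, tracking the three error sources. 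The decisive first step is the dyadic reduction. Writing $t=p2^{-q}$ with $p\in\Z$, $q\in\N$, one has $2^Lt=p2^{L-q}\in\Z$ for $L\geq q$, while $\Delta(\vec k)L^2\in\Z$ for every $\vec k\in(\Z/L)^6$ since each $k_jL\in\Z$. Hence $\Delta(\vec k)t\cdot 2\pi L^2 2^L\in 2\pi\Z$, and the assumption $\varepsilon^{-2}=2\pi L^2 2^L+\rho$ yields $\sin(\Delta(\vec k)t\varepsilon^{-2})=\sin(\Delta(\vec k)t\rho)$. This replaces the enormous, $L$-dependent frequency $\varepsilon^{-2}$ by the slowly growing $\rho$, and is exactly where the dyadic hypothesis and the special regime for $\varepsilon$ enter.

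Next I would symmetrize over $\mathfrak A$. For $\sigma\in\mathfrak A$ the summand $\frac{\sin(\Delta t\rho)}{\Delta}\prod_j|a(k_j)|^2$, the linear constraint $\sum_j(-1)^{j+1}k_j=k$, and the resonance constraint $|\Delta|\geq\nu^{-1}$ are all invariant under the parity-preserving relabelling $\vec k\mapsto\vec k_\sigma$; only the condition $|k-k_{\sigma(1)}+k_{\sigma(2)}-k_{\sigma(3)}|\geq\mu^{-1}$ inherited from $\vec k_\sigma\in C_L(k)$ depends on $\sigma$. I therefore replace each $C_\sigma(k)$ by the common set cut out by the linear constraint together with the single condition $|k-k_1+k_2-k_3|\geq\mu^{-1}$, discarding both the $\nu$-constraint and the $\sigma$-dependent $\mu$-constraints. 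Since $|\frac{\sin(\Delta t\rho)}{\Delta}|\leq t\rho=O(\rho)$, each discarded $\mu$-slab $\{|k-k_{\sigma(1)}+k_{\sigma(2)}-k_{\sigma(3)}|<\mu^{-1}\}$ contains $O(L^5\mu^{-1})$ lattice points and contributes $O(\rho/\mu)$; the discarded resonant region $\{|\Delta|<\nu^{-1}\}$ is controlled by a discrete co-area estimate, since after eliminating $k_5$ via the linear constraint one has $|\partial_{k_4}\Delta|=2|k-k_1+k_2-k_3|\geq 2\mu^{-1}$ on the retained set, so this region contains $O(L^5\mu/\nu)$ points and contributes $O(\rho\mu/\nu)$. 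After these reductions all twelve terms coincide, producing $|\mathfrak A|=12$.

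It then remains to convert the surviving lattice sum over the five free coordinates (six variables, one linear constraint) into the integral against $\delta(k+\sum_j(-1)^jk_j)$ over $B$. The key trick here is to write $\frac{\sin(\Delta t\rho)}{\Delta}=t\rho\,\frac{\sin(\Delta t\rho)}{\Delta t\rho}$, which is smooth across $\Delta=0$, bounded by $O(\rho)$, with gradient in $\vec k$ of size $O((t\rho)^2|\nabla\Delta|)=O(\rho^2)$ on the compact support of $a$ (so that the nominal $1/\Delta$ singularity never appears). The standard Riemann-sum estimate then gives convergence to $\int_B$ with error $O(\rho^2/L)$. Collecting constants, the prefactor is $\frac{1}{2\pi L}\cdot\frac{2}{(2\pi L)^4}\cdot 12=\frac{24}{(2\pi)^5L^5}=\frac{3}{4\pi^5L^5}$, and the $L^{-5}$ is precisely what converts the constrained sum into the delta integral, yielding the stated identity with total error $O(\rho^2/L+\rho\mu/\nu+\rho/\mu)$.

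The genuinely delicate points are the measure estimates behind the error terms rather than any algebra. Controlling $\{|\Delta|<\nu^{-1}\}$ demands the lower bound on $\nabla\Delta$, which is exactly why the $\mu$-constraint is built into the nonlinearity, while the $\mathrm{sinc}$-reformulation is what keeps the Riemann-sum gradient bounded by $\rho^2$ rather than by $\nu$. The main hurdle is to obtain all three errors $\rho^2/L$, $\rho\mu/\nu$ and $\rho/\mu$ simultaneously with the correct powers, together with the careful treatment of lattice points near the boundary $|k-k_1+k_2-k_3|=\mu^{-1}$ of $B$, where the co-area bound degenerates.
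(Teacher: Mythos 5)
Your proof is correct and follows essentially the same route as the paper: the dyadic reduction $\sin(\Delta t\varepsilon^{-2})=\sin(\Delta t\rho)$, the sinc-reformulation giving the $O(\rho^2/L)$ Riemann-sum error, the removal of the $\nu$-constraint and the $\sigma$-dependent $\mu$-constraints with errors $O(\rho\mu/\nu)$ and $O(\rho/\mu)$, and the factor $|\mathfrak A|=12$. The only (immaterial) difference is that you discard the constraints by counting lattice points before passing to the integral, whereas the paper first converts each sum over $B_\sigma$ to an integral and then estimates the discarded regions at the continuous level; both yield the same error terms.
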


\begin{proof} Let $\vec k \in C_\sigma : = \{ (k,\vec k') |\vec k' \in C_\sigma(k)\}$, we have 
$$
\Delta(\vec k) t\varepsilon^{-2} = \Delta(\vec k) L^2 t 2\pi 2^L + \Delta(\vec k) t \rho.
$$
Therefore, since $t$ is dyadic, above a certain rank
$$
\Delta(\vec k) t\varepsilon^{-2} \in \Delta(\vec k) t \rho + 2 \pi \Z.
$$
Hence 
$$
I_{\varepsilon,L}(k,t) = \sum_{\sigma \in \mathfrak A} \sum_{\vec k \in C_\sigma(k)} \frac{2}{(2\pi L)^4} \frac{\sin(\Delta(\vec k) t\rho )}{\Delta (\vec k)} \prod_{j=1}^5|a(k_j)|^2 
$$
Since 
$$
\Big| \frac{\sin(\Delta(\vec k) t\rho )}{\Delta (\vec k)}  \Big| \leq t\rho
$$
and
$$
\Big| \nabla \Big( \frac{\sin(\Delta(\vec k) t\rho }{\Delta (\vec k)}\Big) \Big| \leq |t \rho|^2 \; |\nabla \Delta| \big\| \frac{x\cos x -\sin x}{x^2} \big\|_{L^\infty}
$$
we get the convergence towards the Riemann integral since $\rho^2 = o(\nu^2) = o(L)$,
$$
I_{L}(t) = \sum_{\sigma \in \mathfrak A} \int_{ B_\sigma} \frac{2}{(2\pi )^5} \frac{\sin(\Delta(\vec k) t\rho }{\Delta (\vec k)} \prod_{j=1}^5|a(k_j)|^2 \delta(k + \sum_{j=1}^5 (-1)^j k_j)\hat f(k) \overline{\hat g(k)}d\vec k + O_{a,f,g,t}(\rho^2 L^{-1})
$$
where
$$
B_\sigma  = \{(k,k_1,k_2,k_3,k_4,k_5)\; |\; |k-k_1+k_2-k_3|\geq \mu^{-1}\; |k-k_{\sigma(1)}+k_{\sigma(2)}-k_{\sigma(3)}|\geq \mu^{-1}\; \Delta(\vec k) \geq \nu^{-1}\}.
$$
We have 
\begin{multline*}
\int_{|\Delta(\vec k)|< \frac1{\nu},|k-k_1+k_2-k_3|\geq \frac1{\mu}}  \prod_{j=1}^5|a(k_j)|^2 \delta(k + \sum_{j=1}^5 (-1)^j k_j)|\hat f(k) \hat g(k)|d\vec k = \\
\int_{|k-k_1+k_2-k_3|\geq \frac1{\mu}}dk dk_1 dk_2 dk_3 |a(k_1)|^2 |a(k_2)|^2 |a(k_3)|^2 |\hat f(k) \hat g(k)| \\
\int_{|\Delta(\vec k)|< \frac1{\nu}} dk_4 |a(k_4)|^2 |a(k + \sum_{j=1}^4 (-1)^j k_j)|^2 .
\end{multline*}
Since
$$
\Delta(\vec k) = k^2 - k_1^2 + k_2^2 - k_3^4 - (k +k + \sum_{j=1}^3 (-1)^j k_j)^2 -2 k_4 (k + \sum_{j=1}^3 (-1)^j k_j)
$$
we get that $|\Delta (\vec k)|\leq \frac1{\nu}$ implies 
$$
|k_4 - \frac{k^2 - k_1^2 + k_2^2 - k_3^4 - (k +k + \sum_{j=1}^3 (-1)^j k_j)^2}{2(k + \sum_{j=1}^3 (-1)^j k_j) }|\leq \frac{\mu}{\nu}
$$
hence
\begin{multline*}
\int_{|\Delta(\vec k)|< \frac1{\nu},|k-k_1+k_2-k_3|\geq \frac1{\mu}}  \prod_{j=1}^5|a(k_j)|^2 \delta(k + \sum_{j=1}^5 (-1)^j k_j) |\hat f(k) \hat g(k)|d\vec k\\
\leq \sup |a|^4 \frac\mu\nu \int dk dk_1 dk_2 dk_3 |a(k_1)|^2 |a(k_2)|^2 |a(k_3)|^3|\hat f(k) \hat g(k)|.
\end{multline*}
Thus, we get
\begin{multline*}
I_{L}(t) = \sum_{\sigma \in \mathfrak A} \int_{ B'_\sigma} \frac{2}{(2\pi )^5} \frac{\sin(\Delta(\vec k) t\rho }{\Delta (\vec k)} \prod_{j=1}^5|a(k_j)|^2\hat f(k) \overline{\hat g(k)} \delta(k + \sum_{j=1}^5 (-1)^j k_j) \prod_{j=1}^5 dk_j +\\
 O_{a,t,f,g}(\rho^2 L^{-1}) + O_{a,t,f,g}(\frac{\rho\mu}{\nu})
\end{multline*}
where 
$$
B'_\sigma  = \{(k,k_1,k_2,k_3,k_4,k_5)\; |\; |k-k_1+k_2-k_3|\geq \frac1{\mu}\; |k-k_{\sigma(1)}+k_{\sigma(2)}-k_{\sigma(3)}|\geq \frac1{\mu}\}.
$$

We have
\begin{multline*}
\int_{|k + \sum_{j=1}^3 (-1)^jk_{\sigma (j)}|< \frac1{\mu}} |\hat f(k) \hat g(k)| \prod_{j=1}^5|a(k_j)|^2 \delta(k + \sum_{j=1}^5 (-1)^j k_j) d\vec k =\\
 \int dk dk_1 dk_2 dk_4 |a(k_1)|^2 |a(k_2)|^2 |a(k_4)|^2 |\hat f (k) \hat g(k)|  \\
 \int_{|k + \sum_{j=1}^3 (-1)^jk_{\sigma (j)}|< \frac1{\mu}} dk_3 |a(k_3)|^2 |a(k + \sum_{j=1}^4 (-1)^j k_j)|^2 .
\end{multline*}
We get
\begin{multline*}
\int_{|k + \sum_{j=1}^3 (-1)^jk_{\sigma (j)}|< \frac1{\mu}}  \prod_{j=1}^5|a(k_j)|^2 \delta(k + \sum_{j=1}^5 (-1)^j k_j) |\hat f(k) \hat g(k)| d\vec k\\
\leq \int dk dk_1 dk_2 dk_4 |a(k_1)|^2 |a(k_2)|^2 |a(k_4)|^2 |\hat f (k) \hat g(k)|  \sup |a|^4 \frac1{\mu}.
\end{multline*}

Thus, we get
$$
I_{L}(t) = \sum_{\sigma \in \mathfrak A} \int_{ B} \frac{2}{(2\pi L)^5} \frac{\sin(\Delta(\vec k) t\rho }{\Delta (\vec k)} \prod_{j=1}^5|a(k_j)|^2\hat f(k) \overline{\hat g(k)} \delta(k + \sum_{j=1}^5 (-1)^j k_j)d\vec k + O_{a,t,f,g}(\rho^2L^{-1}+ \frac{\rho\mu}\nu +\frac\rho\mu)
$$
which concludes the proof since the cardinal of $\mathfrak A$ is $3! 2! = 12$.
\end{proof}

Let 
$$
J_L (t)= \frac3{4\pi^5}\int_{\vec k \in B }   \frac{\sin(\Delta(\vec k) t\rho }{\Delta (\vec k)} \prod_{j=1}^5|a(k_j)|^2 \hat f(k) \overline{\hat g(k)} \delta(k + \sum_{j=1}^5 (-1)^j k_j) d\vec k.
$$

\begin{proposition}\label{prop:diracdeltas} We have that
$$
\lim_{L \rightarrow \infty} J_L(t) = \frac3{4\pi^4}\int_{\R^6} \delta(k + \sum_{j=1}^5 (-1)^j k_j)  \delta (\Delta (\vec k) ) \frac1{k-k_1 + k_2 - k_3 } \prod_{j=1}^5|a(k_j)|^2\hat f(k) \overline{\hat g(k)} d\vec k
$$
and besides the integral converges.
\end{proposition}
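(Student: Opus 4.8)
The plan is to read the factor $\frac{\sin(\Delta t\rho)}{\Delta}$ as a nascent Dirac mass concentrating on the resonant manifold $\{\Delta=0\}$ as $\rho=\rho(L)\to\infty$, and to show that the only delicate region — near $\{P=0\}$ with $P=k-k_1+k_2-k_3$, which is the boundary of the domain $B$ that opens up as $\mu\to\infty$ — contributes in an integrable way thanks to the compact support of $a$. Since $J_L$ is already the clean integral (the genuine error terms having been separated off in Proposition \ref{prop:sumtointegral}), the task is purely to pass to the limit in this integral and to check absolute convergence of the limit.

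First I would use the momentum constraint $\delta(k+\sum_j(-1)^jk_j)$ to eliminate $k_5=P+k_4$ and write $J_L$ as an integral over $(k,k_1,k_2,k_3,k_4)$ restricted to $\{|P|\ge \mu^{-1}\}$. On this set $P\neq 0$, so for fixed $(k,k_1,k_2,k_3)$ the map $k_4\mapsto \Delta=(k^2-k_1^2+k_2^2-k_3^2-P^2)-2Pk_4$ is an affine diffeomorphism of $\R$ with $dk_4=\frac{d\Delta}{2|P|}$. This turns $J_L$ into an iterated integral whose inner integral has the form $\int_\R \frac{\sin(t\rho\,\Delta)}{\Delta}\,\psi_{k,k_1,k_2,k_3}(\Delta)\,d\Delta$, where $\psi$ is the smooth, compactly supported profile obtained from $\prod_{j=1}^5|a(k_j)|^2\,\hat f(k)\overline{\hat g(k)}$ after the substitution $k_4=\frac{(k^2-k_1^2+k_2^2-k_3^2-P^2)-\Delta}{2P}$.

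Next I would invoke the classical Dirichlet-kernel limit: for $\psi$ smooth with integrable derivative, $\int_\R \frac{\sin(\lambda\Delta)}{\Delta}\psi(\Delta)\,d\Delta\to\pi\,\psi(0)$ as $\lambda=t\rho\to\infty$, together with the uniform bound $\bigl|\int_\R \frac{\sin(\lambda\Delta)}{\Delta}\psi\,d\Delta\bigr|\lesssim \|\psi\|_{L^\infty}+\|\psi'\|_{L^1}$, which follows from the boundedness of the sine integral $\int_0^X\frac{\sin u}{u}\,du$. The pointwise limit produces the factor $\pi$, turning the prefactor $\frac3{4\pi^5}$ into $\frac3{4\pi^4}$, and evaluates the profile at $\Delta=0$, i.e.\ at $k_4=\frac{k^2-k_1^2+k_2^2-k_3^2-P^2}{2P}$, while the change of variables leaves the co-area weight $\frac1{2|P|}$; this is exactly the factor written as $\frac1{k-k_1+k_2-k_3}$ in the statement. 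Passing the limit through the outer integral in $(k,k_1,k_2,k_3)$ is justified by dominated convergence, using the uniform bound above against an integrable majorant.

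Constructing that majorant — equivalently, proving that the limiting integral converges and that the excluded slab $\{|P|<\mu^{-1}\}$ is harmless as $\mu\to\infty$ — is the main obstacle, and it is where the compact support of $a$ is essential. Indeed $\frac1{2|P|}$ is singular as $P\to0$, but the resolved frequency $k_4=\frac{k^2-k_1^2+k_2^2-k_3^2-P^2}{2P}$ (and hence $k_5=P+k_4$) tends to infinity as $P\to0$ for generic $(k,k_1,k_2,k_3)$, so the factor $|a(k_4)|^2$ in $\psi(0)$ forces the integrand to vanish unless $(k,k_1,k_2,k_3)$ lies in the shell $\{\,|k^2-k_1^2+k_2^2-k_3^2|\lesssim |P|\,\}$, whose thickness is $O(|P|)$. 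By the co-area inequality the measure of this shell within the compact support of $\hat f$ and of the $a(k_j)$ is $O(|P|)$, which cancels the $\frac1{|P|}$ weight; the integrand is therefore bounded near $\{P=0\}$ and globally integrable. I would make this estimate quantitative to supply the dominating function, to show that the contribution of $\{|P|<\mu^{-1}\}$ vanishes in the limit, and finally to conclude both that $J_L$ converges to the asserted integral and that this integral is absolutely convergent.
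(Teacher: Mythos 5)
Your overall strategy is the same as the paper's at the conceptual level: eliminate $k_5$, view $\frac{\sin(\Delta t\rho)}{\pi\Delta}$ as a nascent Dirac mass in the variable $\Delta$, pick up the Jacobian $\frac1{2|\bar k|}$ with $\bar k=k-k_1+k_2-k_3$, and use the localization of $a$ to tame the $\bar k\to0$ singularity. The implementation differs (you use the Dirichlet kernel and the bound $|\int \mathrm{Si}(\lambda\Delta)\psi'\,d\Delta|\le \frac\pi2\|\psi'\|_{L^1}$ plus dominated convergence, whereas the paper first integrates by parts in $k_4$ to replace the Dirichlet kernel by the Fej\'er kernel $\frac{1-\cos\xi}{\xi^2}$ and then runs an explicit case analysis), but the difference that matters is in the one step you leave as a sketch.

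The gap is the dominating function. The uniform bound you quote gives, after the Jacobian, only $\frac{C_a}{2|\bar k|}$ for the inner integral, and $\int_{|\bar k|\ge\mu^{-1}}\frac{dk_3}{|\bar k|}\sim\ln\mu\to\infty$ on the expanding domain, so this is not an admissible majorant. Your proposed repair — the compact support of $a$ confines the integrand to a shell $\{|k^2-k_1^2+k_2^2-k_3^2|\lesssim|\bar k|\}$ of measure $O(|\bar k|)$ — is a statement about the \emph{limiting} integrand $\pi\psi(0)$. For the pre-limit inner integral the support of $\psi$ in the $\Delta$ variable is an interval of length $O(|\bar k|)$ centered at $\bar D=k^2-k_1^2+k_2^2-k_3^2-\bar k^2$, not at $\Delta=0$; when $|\bar D|\gg|\bar k|$ the inner integral does not vanish but is only small, of size $O(|\bar k|/|\bar D|)$, which after the Jacobian leaves a $\frac1{|\bar D|}$-type singularity that is itself only logarithmically divergent and must be resolved through the factorization $\tilde D=-2(k-k_1)(k_2-k_1)$ and the quantitative decay $|a(x)|\lesssim|x|^{-1/2}\langle x\rangle^{-2}$. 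This two-parameter analysis in $(|\bar k|,|\bar D|)$ is exactly the content of the paper's estimates of $J_{L,3},J_{L,4},J_{L,5}$ and $K_{L,2},K_{L,3},K_{L,4}$, and it produces errors of size $\ln^2\mu\cdot(t\rho)^{-1/4}$ which tend to zero only because of the standing hypothesis $\ln^2\mu=o(\rho^{1/4})$ in \eqref{assumptionsRegime} — a hypothesis your argument never invokes, which is a sign that the hard part has been bypassed. Finally, your conclusion that ``the integrand is therefore bounded near $\{P=0\}$'' is false as stated: the limiting integrand behaves like $|\bar k|^{-1/2}|\tilde D|^{-1/2}$, which is unbounded (though integrable), and proving its integrability already requires the H\"older/decay estimates above rather than a pointwise bound.
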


\begin{proof}

We have 
$$
J_L(t) = \frac3{4\pi^5} \int_{\vec k \in C} \frac{\sin(D(\vec k)t\rho)}{D(\vec k)} \prod_{j=1}^4|a(k_j)|^2 |a(k+ \sum_{j=1}^4 (-1)^j k_j)|^2\hat f(k) \overline{\hat g(k)} d\vec k
$$
where 
$$
C = \{(k,k_1,\hdots,k_4)|\; |k-k_1+k_2-k_3|\geq \frac1{\mu}\}
$$
and
$$
D(k,k_1,\hdots, k_4) = \Delta(k,k_1,\hdots, k_4, k-\sum_{j=1}^4(-1)^j k_j).
$$
To lighten the notations, we write equally
$$
C = \{(k_1,k_2,k_3)|\; |k-k_1+k_2-k_3|\geq \frac1{\mu}\}.
$$
We get
$$
J_L(t) = \frac3{4\pi^5} \int_{C} dkdk_1dk_2dk_3 \hat f(k) \overline{\hat g(k)} |a(k_1)|^2|a(k_2)|^2|a(k_3)|^2 J_L^4 (t,k,k_1,k_2,k_3)
$$
where
$$
J_L^4(t,k,k_1,k_2,k_3) = \int dk_4  \frac{\sin(D(\vec k)t\rho)}{D(\vec k)} |a(k_4)|^2|a(k+ \sum_{j=1}^4 (-1)^j k_j)|^2.
$$
Given a fixed $k,k_1,k_2,k_3 \in C$, the derivative of $k_4\mapsto D(\vec k)$ being
$$
-2 (k-k_1+k_2-k_3)
$$
and denoting $\bar k =  k-k_1+k_2-k_3\neq 0$, we get by integration by parts
$$
J_L^4(t,k,k_1,k_2,k_3) = J_{L,1}^4 (t,k,k_1,k_2,k_3) + J_{L,2}^4 (t,k,k_1,k_2,k_3)
$$
with
$$
J_{L,1}^4 (t,k,k_1,k_2,k_3) = \int dk_4 \frac{1-\cos(D(\vec k)t\rho)}{D^2(\vec k) t\rho} |a(k_4)|^2 |a(\bar k + k_4)|^2
$$
and 
\begin{multline*}
J_{L,2}^4 (t,k,k_1,k_2,k_3) =\\
 \int dk_4 \frac{1-\cos(D(\vec k)t\rho)}{\bar k D(\vec k) t\rho} \re \left( a'(k_4) \bar a (k_4) |a(\bar k + k_4)|^2+|a(k_4)|^2 a'(\bar k + k_4) \bar a (\bar k +k_4)\right).
\end{multline*}
For $j=1,2$, write
$$
J_{L,j}(t) =  \frac3{4\pi^5} \int_{C} dkdk_1dk_2dk_3 \hat f(k) \overline{\hat g(k)} |a(k_1)|^2|a(k_2)|^2|a(k_3)|^2 J_{L,j}^4 (t,k_1,k_2,k_3).
$$

We estimate $J_{L,2}(t)$. First, we have 
\[
\frac{1-\cos(D(\vec k)t\rho)}{D(\vec k)t\rho} \lesssim \frac1{(D(\vec k)t\rho)^{1/2}}
\]
and thus
$$
J_{L,2}^4 (t,k,k_1,k_2,k_3) \leq \|a'\|_{L^\infty} \|a\|_{L^\infty}^2 \frac1{|\bar k|\sqrt{t\rho} }\int dk_4 \frac{|a(k_4)|}{\sqrt{D(\vec k)}}
$$
which implies in turn
$$
J_{L,2}(t) \lesssim_a \frac1{\sqrt{t\rho}} \int_C dk dk_1dk_2dk_3dk_4 |\hat f (k) \hat g(k)|\frac{|a(k_4)|\prod_{j=1}^3|a(k_j)|^2}{|\bar k|\sqrt{D(\vec k)}}.
$$

Then, we see that 
$$
D(\vec k) = \bar D(k_1,k_2,k_3) -2\bar k k_4
$$
with 
$$
\bar D (k_1,k_2,k_3) = k^2 -k_1^2+k_2^2-k_3^2 -\bar k^2 = \tilde D(k_1,k_2) + 2 \bar k k_3
$$
with
$$
\tilde D(k_1,k_2) = k^2 -k_1^2+k_2^2 - (k-k_1+k_2)^2 = -2(k-k_1)(k_2-k_1).
$$
We divide the domain of integration of $J_{L,2}$ in three parts as
$$
J_{L,2}(t) = J_{L,3}(t) + J_{L,4}(t) + J_{L,5}(t)
$$
with
$$
J_{L,3}(t) =  \frac3{4\pi^5} \int_{C\cap\{\bar D \leq \tilde D/2\}} dkdk_1dk_2dk_3 |\hat f(k)\hat g(k)||a(k_1)|^2|a(k_2)|^2|a(k_3)|^2 J_{L,2}^4 (t,k_1,k_2,k_3),
$$

$$
J_{L,4}(t,k) =  \frac3{4\pi^5} \int_{C\cap\{\bar D > \tilde D/2, D\leq \bar D/2\}} dkdk_1dk_2dk_3  |\hat f(k)\hat g(k)||a(k_1)|^2|a(k_2)|^2|a(k_3)|^2 J_{L,2}^4 (t,k_1,k_2,k_3)
$$
and
$$
J_{L,5}(t,k) =  \frac3{4\pi^5} \int_{C\cap\{\bar D > \tilde D/2, D> \bar D/2\}} dkdk_1dk_2dk_3  |\hat f(k)\hat g(k)||a(k_1)|^2|a(k_2)|^2|a(k_3)|^2 J_{L,2}^4 (t,k_1,k_2,k_3)
$$

For $J_{L,5}$, we use that $D> \tilde D/4$ to get
$$
|J_{L,5}|\lesssim_a \frac1{\sqrt{t\rho}} \int_C dk dk_1dk_2dk_3dk_4  |\hat f(k)\hat g(k)|\frac{|a(k_4)|\prod_{j=1}^3|a(k_j)|^2}{|\bar k|\sqrt{\tilde D(k_1,k_2)}}.
$$
We use that we integrate over $C$ to get
$$
|J_{L,5}|\lesssim_a \frac{\ln^2 \mu}{\sqrt{t\rho}} \int dkdk_1dk_2dk_3dk_4  |\hat f(k)\hat g(k)|\frac{|a(k_4)|\prod_{j=1}^3|a(k_j)|^2}{|\bar k|\ln^2(|\bar k|)\sqrt{(k-k_1)(k_2-k_1)}}.
$$
We recall that $\bar k = k-k_1 + k_2 - k_3$, therefore, by integrating first in $k_4$ then in $k_3$ then in $k_2$ then in $k_1$ and finally in $k$, as in
$$
|J_{L,5}|\lesssim_a \frac{\ln^2 \mu}{\sqrt{t\rho}} \int dk |\hat f(k)\hat g(k)| \int dk_1 \frac{|a(k_1)|^1}{\sqrt{|k-k_1|}} \int dk_2 \frac{|a(k_2)|^2}{\sqrt{|k_2-k_1|}} \int dk_3 \frac{|a(k_3)|^3}{|\bar k|\ln^2(|\bar k|)} \int dk_4 |a(k_4)|
$$
we get
$$
|J_{L,5}|\lesssim_{a,f,g}  \frac{\ln^2 \mu}{\sqrt{t\rho}}
$$
which goes to $0$ as $L$ goes to $\infty$ as $\ln^2\mu = o(\sqrt \rho)$.

For $J_{L,4}$ we use that since $D\leq \bar D/2$, we have that
$$
|k_4| = \frac{|\bar D - D|}{2|\bar k|} \geq \frac{|\bar D|}{4 |\bar k|}
$$
and therefore since $|\bar D| > |\tilde D|/2$, we get
$$
|k_4|  \geq \frac{|\tilde D|}{8|\bar k|} \textrm{ and } |a(k_4)| \leq \frac1{\sqrt{|k_4|}\an {k_4}^2} \|\sqrt{|x|}\an{x}^2 a\|_{L^\infty} \lesssim_a \frac{\sqrt{|\bar k|}}{\sqrt{\tilde D} \an{k_4}^2}
$$
from which we get
\begin{multline*}
|J_{L,4}(t,k)| 
\lesssim_a \frac{\ln^2 \mu}{\sqrt{t\rho}}\\
 \int dk  |\hat f(k)\hat g(k)| \int dk_1 \frac{|a(k_1)|^2}{\sqrt{|k-k_1|}} \int dk_2 \frac{|a(k_2)|^2}{\sqrt{|k_2-k_1|}} \int dk_3 \frac{|a(k_3)|^3}{|\bar k|\ln^2(|\bar k|)} \int dk_4 \an{k_4}^{-2} \frac{\sqrt{|\bar k|}}{\sqrt{D(\vec k)}}.
\end{multline*}
Since $\frac{D(\vec k)}{|\bar k|} = -2k_4 + \alpha(k_1,k_2,k_3)$ where $\alpha$ is a map depending only on $k_1,k_2,k_3$ we get
$$
|J_{L,4}(t,k)| \lesssim_{a,f,g} \frac{\ln^2 \mu}{\sqrt{t\rho}} .
$$

For $J_{L,3}$ we use that since $|\bar D| \leq |\tilde D|/2$ we have that
$$
|k_3| = \frac{|\tilde D - \bar D|}{2|\bar k|} \geq \frac{|\tilde D|}{4|\bar k|}
$$
and therefore,
$$
|a(k_3)|\lesssim_a \sqrt{\frac{|\bar k|}{\tilde D}}.
$$
We get as previously
\begin{multline*}
|J_{L,3}(t,k)| \lesssim_a \frac{\ln^2 \mu}{\sqrt{t\rho}} \\
\int dk  |\hat f(k)\hat g(k)|\int dk_1 \frac{|a(k_1)|^2}{\sqrt{|k-k_1|}} \int dk_2 \frac{|a(k_2)|^2}{\sqrt{|k_2-k_1|}} \int dk_3 \frac{|a(k_3)|}{|\bar k|\ln^2(|\bar k|)} \int dk_4 |a(k_4)| \frac{\sqrt{|\bar k|}}{\sqrt{D(\vec k)}}
\end{multline*}
and we conclude as for $J_{L,4}$. 

We now compute the limit of $J_{L,1}$. By the change of variable $\xi = D(\vec k)t\rho$, we have
$$
J_{L,1}^4 (t,k,k_1,k_2,k_3) = \int \frac{d\xi}{2|\bar k|} \frac{1-\cos(\xi)}{\xi^2} |a(\frac{\bar D -\xi/(t\rho)}{2\bar k})|^2 |a(\bar k + \frac{\bar D -\xi/(t\rho)}{2\bar k})|^2.
$$ 
We divide the integral into two parts as
$$
J_{L,1}^4(t,k,k_1,k_2,k_3)=K_{L,1}^4(t,k,k_1,k_2,k_3) + K_{L,2}^4(t,k,k_1,k_2,k_3)
$$
with
$$
K_{L,1}^4(t,k,k_1,k_2,k_3) = \int_{|\xi|\leq |\bar D| t\rho/2} \frac{d\xi}{2|\bar k|} \frac{1-\cos(\xi)}{\xi^2} |a(\frac{\bar D -\xi/(t\rho)}{2\bar k})|^2 |a(\bar k + \frac{\bar D -\xi/(t\rho)}{2\bar k})|^2
$$
and 
$$
K_{L,2}^4(t,k,k_1,k_2,k_3) = \int_{|\xi|> |\bar D| t\rho/2} \frac{d\xi}{2|\bar k|} \frac{1-\cos(\xi)}{\xi^2} |a(\frac{\bar D -\xi/(t\rho)}{2\bar k})|^2 |a(\bar k + \frac{\bar D -\xi/(t\rho)}{2\bar k})|^2.
$$
We denote for $j=1,2$,
$$
K_{L,j}(t,k) = \frac3{4\pi^5} \int_{C} dkdk_1dk_2dk_3\hat f (k) \overline{\hat g(k)} |a(k_1)|^2|a(k_2)|^2|a(k_3)|^2 K_{L,j}^4 (t,k,k_1,k_2,k_3).
$$
For $K_{L,2}$, using the bound on $|\xi|$ we get
$$
|K_{L,2}^4(t,k,k_1,k_2,k_3)| \lesssim_a \frac1{|\bar k|(t\rho)^{1/4} |\bar D|^{1/4}}\int d\xi \frac{1-\cos(\xi)}{|\xi|^{7/4}} \lesssim_a  \frac1{|\bar k|(t\rho)^{1/4}  |\bar D|^{1/4}}.
$$
We divide the integral on $k_1,k_2,k_3$ in two as previously as
$$
|K_{L,2}(t)| \lesssim_a K_{L,3}(t)  + K_{L,4}(t)
$$
with
$$
K_{L,3}(t) = \int_{\bar D \leq \tilde D/2 \cap C}  \frac1{|\bar k|(t\rho)^{1/4} |\bar D|^{1/4}} |\hat f(k) \hat g(k)| |a(k_1)|^2|a(k_2)|^2|a(k_3)|^2 dk_1dk_2dk_3
$$
and 
$$
K_{L,4}(t) = \int_{\bar D > \tilde D/2\cap C}  \frac1{|\bar k|(t\rho)^{1/4} |\bar D|^{1/4}} |\hat f(k) \hat g(k)||a(k_1)|^2|a(k_2)|^2|a(k_3)|^2dkdk_1dk_2dk_3.
$$

For $K_{L,4}$ we use the inequalities on $\bar k$ and $\bar D$ to get
$$
K_{L,4}(t) \lesssim \frac{\ln^2(\mu)}{(t\rho)^{1/4} }\int  \frac1{|\bar k|\ln^2(|\bar k|) |k-k_1|^{1/4} |k_1-k_2|^{1/4}} |\hat f(k) \hat g(k)||a(k_1)|^2|a(k_2)|^2|a(k_3)|^2dkdk_1dk_2dk_3
$$
which is integrable as previously, hence
$$
K_{L,4}(t,k) \lesssim_{a,f,g} \frac{\ln^2(\mu)}{(t\rho)^{1/4}}.
$$

For $K_{L,3}$ we recall that the inequality on $\bar D$ implies that
$$
|k_3| \geq \frac{\tilde D}{|\bar k|}.
$$
We use that
$$
|a(k_3)| \lesssim_a \frac{|\bar k|^{3/4}}{|\tilde D|^{3/4}}.
$$
We deduce
$$
K_{L,3} \lesssim_a \frac1{(t\rho)^{1/4} } \int \frac1{|\bar D|^{1/4}|\tilde D|^{3/4} |\bar k|^{1/4} }|\hat f(k) \hat g(k)||a(k_1)|^2|a(k_2)|^2|a(k_3)|dkdk_1dk_2dk_3.
$$
We have that
$$
|\bar D| \geq 2 |k_3 - \beta(k_1,k_2)|\; |k_3-\gamma(k_1,k_2)|
$$
where $\beta$ and $\gamma$ are two maps depending only on $k_1,k_2$. By using H\"older's inequality on $k_3$, we get
\begin{multline*}
K_{L,3} \lesssim_a  \frac1{(t\rho)^{1/4}}\int dk |\hat f(k) \hat g(k)|\\
\int dk_1  \frac{|a(k_1)|^2}{|k-k_1|^{3/4}} \int dk_2 \frac{ |a(k_2)|^2 }{|k_2-k_1|^{3/4}}\|\frac{|a|^{1/3}}{|k_3-\beta|^{1/4}}\|_{L^3(k_3)}\|\frac{|a|^{1/3}}{|k_3-\gamma|^{1/4}}\|_{L^3(k_3)}\|\frac{ |a|^{1/3}}{|\bar k|^{1/4}}\|_{L^3(k_3)}.
\end{multline*}
Since $\frac34 < 1$, we get
$$
K_{L,3} \lesssim_{f,g} \frac1{(t\rho)^{1/4}}.
$$

We now turn to $K_{L,1}$. We set
\begin{multline*}
f_L(k,k_1,k_2,k_3,\xi) = 
{\bf 1}_{|\bar k|\geq \frac1{\mu}}{\bf 1}_{|\xi|\leq |\bar D| t\rho/2}\\
 \frac1{2|\bar k|} \frac{1-\cos(\xi)}{\xi^2} |a(\frac{\bar D -\xi/(t\rho)}{2\bar k})|^2 |a(\bar k + \frac{\bar D -\xi/(t\rho)}{2\bar k})|^2 \hat f(k) \overline{\hat g(k)}|a(k_1)|^2|a(k_2)|^2|a(k_3)|^2.
\end{multline*}
When $L$ goes to $\infty$ we have that $\rho \rightarrow\infty$ and thus $f_L$ converges almost surely to
$$
f_\infty(k,k_1,k_2,k_3,\xi) = \frac1{2|\bar k|} \frac{1-\cos(\xi)}{\xi^2} |a(\frac{\bar D }{2\bar k})|^2 |a(\bar k + \frac{\bar D }{2\bar k})|^2\hat f(k) \overline{\hat g(k)} |a(k_1)|^2|a(k_2)|^2|a(k_3)|^2.
$$

What is more, because $|\xi|\leq |\bar D| t\rho/2$, we have that
$$
\Big|\frac{\bar D -\xi/(t\rho)}{2\bar k}\Big| \geq \Big| \frac{\bar D}{4\bar k}\Big|
$$
from which we deduce that
$$
|a(\frac{\bar D -\xi/(t\rho)}{2\bar k})|^2 \lesssim_a \frac{|\bar k|^{1/2}}{|\bar D|^{1/2}}.
$$
Therefore, if $|\bar D|\geq |\tilde D|/2$, we get 
$$
|f_L(k,k_1,k_2,k_3,\xi)| \lesssim_a \frac1{|\bar k|^{1/2}|\tilde D|^{1/2}} \frac{1-\cos(\xi)}{\xi^2} |\hat f(k)\hat g(k)| |a(k_1)|^2|a(k_2)|^2|a(k_3)|^2.
$$
On the other hand, if $|\bar D|< |\tilde D|/2$ then 
$$
|k_3| > \frac{|\tilde D|}{4|\bar k|}
$$
and we get
$$
|f_L(k,k_1,k_2,k_3,\xi)| \lesssim_a \frac1{|\bar k|^{1/2}|\tilde D|^{1/2}} \frac{1-\cos(\xi)}{\xi^2} |\hat f(k)\hat g(k)| |a(k_1)|^2|a(k_2)|^2|a(k_3)|.
$$
Hence, for $k,k_1,k_2,k_3,k_4$ we have 
$$
|f_L(k_1,k_2,k_3,\xi)| \lesssim_a \frac1{|\bar k|^{1/2}|\tilde D|^{1/2}} \frac{1-\cos(\xi)}{\xi^2} |\hat f(k)\hat g(k)| |a(k_1)|^2|a(k_2)|^2|a(k_3)|,
$$
the map on the right hand side being integrable we can apply DCT and get that
\begin{multline*}
\lim_{L\rightarrow \infty} K_{L,1}(t) = \int f_\infty(k,k_1,k_2,k_3,\xi) \\
= \frac3{4\pi^4} \int dk_1dk_2dk_3 \frac1{|\bar k|} |a(\frac{\bar D }{2\bar k})|^2 |a(\bar k + \frac{\bar D }{2\bar k})|^2 |a(k_1)|^2|a(k_2)|^2|a(k_3)|^2
\end{multline*}
and the map below the integral is integrable.
\end{proof}

\subsection{Case \texorpdfstring{$t=\frac13$}{t is one third}}

\begin{proposition}\label{prop:sumtointegralonethird} Assume that $\varepsilon^{-2} =2\pi 2^{L} L^2 + \rho(L)$ and that $t = \frac13$. We have 
$$
I_{L}(t) = \frac1{12\pi^5} \int_{\vec k \in B }   \frac{\sin(\Delta(\vec k) t\rho }{\Delta (\vec k)} \prod_{j=1}^5|a(k_j)|^2 \delta(k + \sum_{j=1}^5 (-1)^j k_j) \hat f(k) \overline{\hat g(k)}d\vec k + O_a(\frac{\mu\nu}{L} + \frac{\rho^2}{L} + \frac{\rho\mu}\nu + \frac\rho\mu)
$$
where $B = \{(k,k_1,k_2,k_3,k_4,k_5) \in \R^6 \; |\; |k-k_1+k_2-k_3|\geq \frac1{\mu}\}$, where
\[
\Delta(\vec k) = k^2 + \sum_{j=1}^5 (-1)^j k_j^2
\]
and $d\vec k = dk \prod_{j=1}^5 dk_j$.
\end{proposition}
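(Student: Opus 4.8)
The plan is to run the proof of the dyadic case, Proposition \ref{prop:sumtointegral}, essentially verbatim; the only genuinely new ingredient is the arithmetic of $2^L$ modulo $3$. As there, I would start from the $n=1$ description of Proposition \ref{prop:n=1}, so that
\[
I_{\varepsilon,L}(k,t) = \sum_{\sigma\in\mathfrak A}\sum_{\vec k\in C_\sigma(k)}\frac{2}{(2\pi L)^4}\frac{\sin(\Delta(\vec k)t\varepsilon^{-2})}{\Delta(\vec k)}\prod_{j=1}^5|a(k_j)|^2,
\]
and the task is to compute the limit of the Riemann sum $I_L(t)=\frac{1}{2\pi L}\sum_k\hat f(k)\overline{\hat g(k)}I_{\varepsilon,L}(k,t)$.

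First I would isolate the fast phase. Writing $k_j=p_j/L$ with $p_j\in\Z$, the quantity $N:=L^2\Delta(\vec k)=p^2-p_1^2+p_2^2-p_3^2+p_4^2-p_5^2$ (with $p=p_1-p_2+p_3-p_4+p_5$) is an integer, and with $t=\tfrac13$, $\varepsilon^{-2}=2\pi 2^L L^2+\rho$ one has
\[
\Delta(\vec k)t\varepsilon^{-2}=\frac{2\pi 2^L}{3}N+\Delta(\vec k)t\rho.
\]
Since $2^L\equiv(-1)^L\pmod 3$ for every $L$, the first term equals $\frac{2\pi(-1)^L}{3}N$ modulo $2\pi$, so that $\sin(\Delta(\vec k)t\varepsilon^{-2})=\sin\bigl(\Delta(\vec k)t\rho+\tfrac{2\pi(-1)^L}{3}N\bigr)$. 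In contrast with the dyadic case, where this shift vanished identically, it now survives as a phase depending only on $N\bmod 3$. I would therefore split the lattice sum over $(p_1,\dots,p_5)$ according to the residue class $c\in\{0,1,2\}$ of $N=L^2\Delta(\vec k)$ modulo $3$, on each of which the shift equals $\phi_c:=\frac{2\pi(-1)^L c}{3}$.

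Each such class is a union of cosets of $(3\Z)^5$ of density $n_c/3^5$, where $n_c=\#\{\vec p\in\mathbb F_3^5 : N(\vec p)\equiv c\}$. I would compute these counts with the Gauss sum $S:=\sum_{\vec p\in\mathbb F_3^5}\omega^{N(\vec p)}$, $\omega=e^{2i\pi/3}$, via $n_c=\frac13\bigl(3^5+\omega^{-c}S+\omega^{-2c}\overline S\bigr)$. The quadratic form $N$ on $\mathbb F_3^5$ is degenerate: its associated matrix annihilates $(1,1,1,1,1)$ (all row sums vanish), so it has rank $4$ and a one-dimensional radical. Because the effective rank is \emph{even}, $S$ is real, equal to $3\cdot(\pm 3^2)=\pm 27$; enumerating the $243$ residues (equivalently computing the discriminant mod $3$) gives $S=+27$, hence $n_0=99$ and $n_1=n_2=72$. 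The equality $n_1=n_2$, forced by the reality of $S$, is the crucial point: using $\sin(x+\tfrac{2\pi}3)+\sin(x-\tfrac{2\pi}3)=-\sin x$ the cosine contributions cancel and
\[
\sum_{c=0}^2\frac{n_c}{3^5}\sin(\Delta t\rho+\phi_c)=\frac{n_0-n_1}{3^5}\sin(\Delta t\rho)=\frac19\sin(\Delta t\rho).
\]
After summing the $12$ elements of $\mathfrak A$ the prefactor becomes $12\cdot\frac{2}{(2\pi)^5}\cdot\frac19=\frac1{12\pi^5}$, which is exactly the announced constant, namely $\tfrac19$ of the dyadic one.

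It then remains to convert each residue-class sum into $\frac{n_c}{3^5}$ times the Riemann integral over $B_\sigma$ and to clean up the domain exactly as in Proposition \ref{prop:sumtointegral}: discarding $\{|\Delta|<\nu^{-1}\}$ costs $O_a(\rho\mu/\nu)$, replacing $B_\sigma$ by $B$ costs $O_a(\rho/\mu)$, and the gradient bound $|\nabla(\sin(\Delta t\rho)/\Delta)|\lesssim\rho^2$ gives the Riemann error $O_a(\rho^2/L)$; the subsequent passage to Dirac masses is Proposition \ref{prop:diracdeltas}. I expect the one real difficulty, and the source of the extra term $\frac{\mu\nu}{L}$ absent in the dyadic case, to be this residue-class step: since $\phi_c$ is constant only along cosets of $(3\Z)^5$, the comparison of the count of class-$c$ points with $n_c/3^5$ of the volume must be performed on the \emph{truncated} domain $\{|\Delta|\ge\nu^{-1},\ |\bar k|\ge\mu^{-1}\}$, where the integrand is as large as $\nu$ near $\{|\Delta|=\nu^{-1}\}$ and the admissible range is narrowed by the $\mu^{-1}$ cut; bounding the resulting boundary defect by the lattice scale $L^{-1}$ produces the $\frac{\mu\nu}{L}$ loss. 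Under the regime \eqref{assumptionsRegime} one has $\mu\nu\ll\nu^2\ll L$, so this term still tends to $0$.
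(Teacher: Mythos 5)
Your overall strategy is the same as the paper's and you get all the structural points right: the reduction of $2^Lt$ modulo $1$ to a phase $\tfrac{2\pi(\pm1)}{3}N$ with $N=L^2\Delta(\vec k)\in\Z$, the splitting of the lattice sum into residue classes mod $3$, the counts $(99,72,72)$, the resulting factor $\tfrac{99-72}{3^5}=\tfrac19$ of the dyadic constant, and the identification of the class-counting step as the source of the new $\mu\nu/L$ error. Your organization is in fact cleaner in two respects: you partition by the single residue $N\bmod 3$ (three classes) rather than by the residues of all five coordinates ($3^5$ classes, which is what the paper does via maps $\kappa:[0,4]\to\{-1,0,1\}$), and you obtain the counts by a Gauss-sum/quadratic-form argument where the paper resorts to a brute-force Python enumeration. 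Those are genuine, if modest, improvements in presentation.

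The gap is in the treatment of the non-cancelling part. Expanding $\sin(\Delta t\rho+\phi_c)=\cos\phi_c\,\sin(\Delta t\rho)+\sin\phi_c\,\cos(\Delta t\rho)$, the first piece is handled exactly as in the dyadic case since $\sin(\Delta t\rho)/\Delta$ is bounded by $t\rho$ with gradient $O(\rho^2)$. But the second piece involves $\cos(\Delta t\rho)/\Delta$, which is of size $\nu$ on the truncated domain and whose integral over $\{|\Delta|\ge\nu^{-1}\}$ is not absolutely controlled (it is of size $\mu\ln\nu$ before exploiting oddness), so you cannot compare each class-$c$ sum to ``$n_c/3^5$ of the volume'' as you propose; the cancellation coming from $n_1=n_2$ and $\sin\phi_1=-\sin\phi_2$ must be implemented at the level of the lattice sums themselves, not of their putative limits. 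This is precisely where the paper's proof does its real work: it first writes $\cos(\Delta t\rho)=(\cos(\Delta t\rho)-1)+1$, so that the $(\cos(\Delta t\rho)-1)/\Delta$ part is again bounded by $t\rho$ with bounded derivative and its class sums genuinely cancel in the limit; the leftover $1/\Delta$ part is then killed by pairing residue classes through an explicit involution $\kappa\mapsto\bar\kappa$ with $c_{\bar\kappa,L}=-c_{\kappa,L}$ and translating the lattice, $\vec j=\vec k+(\bar\kappa-\kappa)/L$, so that one only ever estimates the \emph{difference} of two lattice sums over translated cosets. That difference is controlled term by term (change of the indicator of the truncated domain, giving the boundary defect $\mu\nu/L$ from the layer $\nu^{-1}\le|\Delta|\le\nu^{-1}+O(L^{-1})$; change of $1/\Delta$, giving $\nu^2/L$; change of the amplitudes, giving $\nu/L$). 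Your ``boundary defect at lattice scale'' heuristic points at the right term, but without the $\cos-1$ splitting and the translation argument (or an equivalent summation-by-parts exploiting the near-oddness of $1/\Delta$) the equidistribution claim for the $\cos(\Delta t\rho)/\Delta$ piece is not justified. Note also that even the paper's own bookkeeping produces a $\nu^2/L$ term not listed in the statement; it is harmless since $\nu^2=o(L)$ under \eqref{assumptionsRegime}, but your error accounting would inherit it as well.
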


\begin{proof} First, we see that
\[
t = \frac13 = \sum_{n\geq 1} \frac1{2^{2n}}
\]
and therefore
\[
2^L t \in \frac{x_L}3 + \N
\]
where $x_L= 1$ if $L$ is even and $x_L = 2$ if $L$ is odd. We deduce that
\[
\Delta(\vec k) \varepsilon^{-2} t \in \Delta(\vec k) t\rho + \frac{2\pi x_L}{3} \Big[ (kL)^2 + \sum_{j=1}^5 (-1)^j (Lk_j)^2\Big] + 2\pi \Z.
\]
And therefore, we get
\begin{multline*}
\sin(\Delta(\vec k) \varepsilon^{-2}t) = \\
\sin(\Delta(\vec k) \rho t) \cos\Big( \frac{2\pi x_L}{3} \Big[ (kL)^2 + \sum_{j=1}^5 (-1)^j (Lk_j)^2\Big]\Big) + \cos(\Delta(\vec k) \rho t ) \sin \Big( \frac{2\pi x_L}{3} \Big[ (kL)^2 + \sum_{j=1}^5 (-1)^j (Lk_j)^2\Big]\Big).
\end{multline*}
To know the values of 
\[
\cos\Big( \frac{2\pi x_L}{3} \Big[ (kL)^2 + \sum_{j=1}^5 (-1)^j (Lk_j)^2\Big]\Big)
\]
and 
\[
\sin\Big( \frac{2\pi x_L}{3} \Big[ (kL)^2 + \sum_{j=1}^5 (-1)^j (Lk_j)^2\Big]\Big)
\]
it is sufficient to know the congruence of $Lk, Lk_j$ modulo $3$. Therefore, we write 
\[
I_L(t) =\frac2{(2\pi)^5} \sum_{\kappa} I_{\kappa,L}(t)
\]
with $\kappa$ a map from $[0,4]\cap \N$ to $\{-1,0,1\}$ and 
\[
I_{\kappa,L}(t) = \sum_{\sigma \in \mathfrak A} \frac1{L^5}\sum_{\vec k \in C_{\sigma,\kappa}} 
\frac{\sin(\Delta(\vec k) t \rho) b_{\kappa,L} + \cos(\Delta(\vec k) t\rho) c_{\kappa,L}}{\Delta(\vec k)} \prod_{j=1}^5 |a(k_j)|^2 \hat f(k) \overline{\hat g(k)}
\]
where 
\[
C_{\sigma,\kappa} = \{(k,k_1,k_2,k_3,k_4,k_5) \in C_\sigma \; |\; Lk \in \kappa(0) + 3\Z, \quad \forall j, \quad Lk_j \in \kappa(j) + 3\Z\}
\]
and 
\[
b_{\kappa,L}= \cos\Big( \frac{2\pi x_L}{3}  \sum_{j=0}^5 (-1)^j \kappa(j)^2\Big)
\]
and
\[
c_{\kappa,L} = \sin\Big( \frac{2\pi x_L}{3}  \sum_{j=0}^5 (-1)^j \kappa(j)^2\Big).
\]

We divide $I_{\kappa,L}$ in two as
\[
I_{\kappa,L} = J_{\kappa,L} + K_{\kappa,L}
\]
with
\[
J_{\kappa,L} = b_{\kappa,L} \sum_{\sigma \in \mathfrak A} \frac1{L^5}\sum_{\vec k \in C_{\sigma,\kappa}} 
\frac{\sin(\Delta(\vec k) t \rho)}{\Delta(\vec k)} \prod_{j=1}^5 |a(k_j)|^2 \hat f(k) \overline{\hat g(k)}
\]
and
\[
K_{\kappa,L} =  c_{\kappa,L}\sum_{\sigma \in \mathfrak A} \frac1{L^5}\sum_{\vec k \in C_{\sigma,\kappa}} 
\frac{ \cos(\Delta(\vec k) t\rho)}{\Delta(\vec k)} \prod_{j=1}^5 |a(k_j)|^2 \hat f(k) \overline{\hat g(k)}.
\]

With the same strategy as in the proof of Proposition \ref{prop:sumtointegral}, we have 
\[
J_{\kappa,L} = b_{\kappa,L} \frac{12}{3^5} \int_{\vec k \in B} \frac{\sin(\Delta(\vec k) t \rho)}{\Delta(\vec k)} \prod_{j=1}^5 |a(k_j)|^2 \hat f(k) \overline{\hat g(k)} d\vec k + O_{a,f,g}\Big( \frac{\rho^2}{L} + \frac{\rho\mu}\nu + \frac\rho\mu\Big).
\]
According to the program set in Appendix \ref{app:program}, $b_{\kappa,L} = 1$ in $99$ cases and is equal to 
\[
\cos(\frac{2\pi}{3}) = \cos(-\frac{2\pi}3) = -\frac12
\]
in $144$ cases. Therefore,
\[
\sum_\kappa J_{\kappa,L} = 12 \frac{99-72}{3^5} \int_{\vec k \in B} \frac{\sin(\Delta(\vec k) t \rho)}{\Delta(\vec k)} \prod_{j=1}^5 |a(k_j)|^2 \hat f(k) \overline{\hat g(k)} d\vec k + O_{a,f,g}\Big( \frac{\rho^2}{L} + \frac{\rho\mu}\nu + \frac\rho\mu\Big).
\]

Still according to the program in Appendix \ref{app:program}, $c_{\kappa,L} = 0$ in $99$ cases, is equal to $\frac{\sqrt 3}{2}$ in $72$ cases and is equal to $-\frac{\sqrt 3}2$ in $72$ cases. We fix an involution $\kappa \mapsto \bar \kappa$ such that $c_{\bar \kappa,L} = -c_{\kappa,L}$. We get
\begin{multline}\label{eqKKappa}
K_{\kappa,L} + K_{\bar \kappa,L} = c_{\kappa,L}\sum_{\sigma \in \mathfrak A} \frac1{L^5}\sum_{\vec k \in C_{\sigma,\kappa}} 
\frac{ \cos(\Delta(\vec k) t\rho) -1}{\Delta(\vec k)} \prod_{j=1}^5 |a(k_j)|^2 \hat f(k) \overline{\hat g(k)}\\
-c_{\kappa,L}\sum_{\sigma \in \mathfrak A} \frac1{L^5}\sum_{\vec k \in C_{\sigma,\bar \kappa}} 
\frac{ \cos(\Delta(\vec k) t\rho)-1}{\Delta(\vec k)} \prod_{j=1}^5 |a(k_j)|^2 \hat f(k) \overline{\hat g(k)}\\
+ c_{\kappa,L}\sum_{\sigma \in \mathfrak A} \frac1{L^5}\sum_{\vec k \in C_{\sigma,\kappa}} 
\frac1{\Delta(\vec k)} \prod_{j=1}^5 |a(k_j)|^2 \hat f(k) \overline{\hat g(k)}\\
-c_{\kappa,L}\sum_{\sigma \in \mathfrak A} \frac1{L^5}\sum_{\vec k \in C_{\sigma,\bar \kappa}} 
\frac1{\Delta(\vec k)} \prod_{j=1}^5 |a(k_j)|^2 \hat f(k) \overline{\hat g(k)}.
\end{multline}
For the same reasons as in the proof of Proposition \ref{prop:sumtointegral}, we get that
\begin{multline*}
\sum_{\sigma \in \mathfrak A} \frac1{L^5}\sum_{\vec k \in C_{\sigma,\kappa}} 
\frac{ \cos(\Delta(\vec k) t\rho) -1}{\Delta(\vec k)} \prod_{j=1}^5 |a(k_j)|^2 \hat f(k) \overline{\hat g(k)} = \\
\frac{12}{3^5} \int_{\vec k \in B}  \frac{ \cos(\Delta(\vec k) t\rho) -1}{\Delta(\vec k)} \prod_{j=1}^5 |a(k_j)|^2 \hat f(k) \overline{\hat g(k)} + O_{a,f,g}\Big( \frac{\rho^2}{L} + \frac{\rho\mu}\nu + \frac\rho\mu\Big)
\end{multline*}
the key points being that $\frac{ \cos(\Delta(\vec k) t\rho) -1}{\Delta(\vec k)} \leq t\rho$ and that the derivative of $\frac{\cos x -1 }{x}$ is continuous and bounded. 

This erases the first two lines in \eqref{eqKKappa} as in
\begin{multline}\label{eqKKappa2}
K_{\kappa,L} + K_{\bar \kappa,L} = 
 c_{\kappa,L}\sum_{\sigma \in \mathfrak A} \frac1{L^5}\sum_{\vec k \in C_{\sigma,\kappa}} 
\frac1{\Delta(\vec k)} \prod_{j=1}^5 |a(k_j)|^2 \hat f(k) \overline{\hat g(k)}\\
-c_{\kappa,L}\sum_{\sigma \in \mathfrak A} \frac1{L^5}\sum_{\vec k \in C_{\sigma,\bar \kappa}} 
\frac1{\Delta(\vec k)} \prod_{j=1}^5 |a(k_j)|^2 \hat f(k) \overline{\hat g(k)}\\
+ O_{a,f,g}\Big( \frac{\rho^2}{L} + \frac{\rho\mu}\nu + \frac\rho\mu\Big)
\end{multline}

We deal with the remainder, we set
\[
K_{\kappa,L,2} = 
\frac1{L^5}\sum_{\vec k \in C_{\sigma,\kappa}} 
\frac1{\Delta(\vec k)} \prod_{j=1}^5 |a(k_j)|^2 \hat f(k) \overline{\hat g(k)}
-\frac1{L^5}\sum_{\vec k \in C_{\sigma,\bar \kappa}} 
\frac1{\Delta(\vec k)} \prod_{j=1}^5 |a(k_j)|^2 \hat f(k) \overline{\hat g(k)}
\]
Writing $C_\kappa = \{\vec k | Lk \in \kappa(0) + 3\Z, \;\forall j ,\; Lk_j \in \kappa(j) + 3\Z\}$ and $\vec j = \vec k + \frac{\bar \kappa - \kappa}{L}$, we get
\begin{multline*}
K_{\kappa,L,2} = \frac1{L^5} \sum_{\vec k \in C_\kappa} \Big[\frac{{\bf 1}_{\vec k \in C_\sigma} - {\bf 1}_{\vec j \in C_\sigma}}{\Delta(\vec k)} + {\bf 1}_{\vec j\in C_\sigma}\Big( 
\frac1{\Delta(\vec k)} - \frac1{\Delta(\vec j)}\Big)\Big] \prod_{j=1}^5 |a(k_j)|^2 \hat f(k) \overline{\hat g(k)}\\
+ \frac1{L^5} \sum_{\vec k \in C_\kappa} \frac{{\bf 1}_{\vec j\in C_\sigma}}{\Delta(\vec j)}\Big(  \prod_{j=1}^5 |a(k_j)|^2 \hat f(k) \overline{\hat g(k)} -  \prod_{j=1}^5 |a(j_j)|^2 \hat f(j) \overline{\hat g(j)}\Big)
\end{multline*}

We divide again $K_{\kappa,L,2}$ into three parts as
\[
K_{\kappa,L,2} = K_{\kappa,L,3} + K_{\kappa,L,4} + K_{\kappa,L,5}
\]
with
\[
K_{\kappa,L,3} =  \frac1{L^5} \sum_{\vec k \in C_\kappa} \frac{{\bf 1}_{\vec k \in C_\sigma} - {\bf 1}_{\vec j \in C_\sigma}}{\Delta(\vec k)} \prod_{j=1}^5 |a(k_j)|^2 \hat f(k) \overline{\hat g(k)},
\]

\[
K_{\kappa,L,4} =  \frac1{L^5} \sum_{\vec k \in C_\kappa}  {\bf 1}_{\vec j\in C_\sigma}\Big( 
\frac1{\Delta(\vec k)} - \frac1{\Delta(\vec j)}\Big) \prod_{j=1}^5 |a(k_j)|^2 \hat f(k) \overline{\hat g(k)}
\]
and
\[
K_{\kappa,L,5} = \frac1{L^5} \sum_{\vec k \in C_\kappa} \frac{{\bf 1}_{\vec j\in C_\sigma}}{\Delta(\vec j)}\Big(  \prod_{j=1}^5 |a(k_j)|^2 \hat f(k) \overline{\hat g(k)} -  \prod_{j=1}^5 |a(j_j)|^2 \hat f(j) \overline{\hat g(j)}\Big).
\]

We first deal with $K_{\kappa,L,5}$. We have $|\vec k - \vec j|\leq \frac2{L}$. Hence,
\[
\Big|  \frac{{\bf 1}_{\vec j\in C_\sigma}}{\Delta(\vec j)}\Big(  \prod_{j=1}^5 |a(k_j)|^2 \hat f(k) \overline{\hat g(k)} -  \prod_{j=1}^5 |a(j_j)|^2 \hat f(j) \overline{\hat g(j)}\Big)\Big|\lesssim_{a,f,g} \frac{{\bf 1}_{\vec j\in C_\sigma}\nu}{L}.
\]

Besides $a$ has compact support hence there exists $M>0$ such that
\[
|K_{\kappa,L,5}| \lesssim_{a,f,g}  \frac1{L^5} \sum_{\vec k \in C_\kappa\cap [-M,M]^6}\frac{{\bf 1}_{\vec j\in C_\sigma}\nu}{L} \lesssim_{a,f,g} \frac{\nu}{L}.
\]

We turn to $K_{\kappa,L,4}$. Given again the fact that $a$ has compact support, we get on the support of $\prod_{j=1}^5 |a(k_j)|^2$ remembering that $k = -\sum_{j=1}^5 (-1)^j k_j$,
\[
\Big|{\bf 1}_{\vec j\in C_\sigma}\Big( 
\frac1{\Delta(\vec k)} - \frac1{\Delta(\vec j)}\Big)\Big| \lesssim_a \frac{{\bf 1}_{\vec j\in C_\sigma} \nu}{|\Delta(\vec k)|L}.
\]
Since $|\Delta(\vec k) | \geq |\Delta(\vec j)| - |\Delta(\vec j) - \Delta(\vec k)|$ and since
$|\Delta(\vec j) - \Delta(\vec k)|\lesssim_a L^{-1}$ we get that above a certain rank, since $\nu = o(\sqrt L)$,
\[
\Big|{\bf 1}_{\vec j\in C_\sigma}\Big( 
\frac1{\Delta(\vec k)} - \frac1{\Delta(\vec j)}\Big)\Big| \lesssim_a \frac{{\bf 1}_{\vec j\in C_\sigma} \nu^2}{L}.
\]
And therefore, we get
\[
|K_{\kappa,L,4}| \lesssim_{a,f,g} \frac{\nu^2}{L}.
\]

We now turn to $K_{\kappa,L,3}$ and estimate the numbers of $\vec k$ such that
\[
{\bf 1}_{\vec k \in C_\sigma} - {\bf 1}_{\vec j \in C_\sigma}
\]
is not null. We assume without loss of generality that $\vec k \in C_\sigma$ but that $\vec j\notin C_\sigma$.

\textbf{First Case : } $|\Delta(\vec j)| < \nu^{-1}$. We have 
\[
\nu^{-1} \leq |\Delta(\vec k)| \leq |\Delta (\vec j)| + |\Delta(\vec k) - \Delta(\vec j)|.
\]
Since on the support of  $\prod_{j=1}^5 |a(k_j)|^2$, we have 
\[
|\Delta(\vec k) - \Delta(\vec j)| \lesssim_a L^{-1},
\]
we get
\[
\nu^{-1} \leq |\Delta(\vec k)| \leq |\Delta (\vec j)| +C_a L^{-1}.
\]
We recall that $\Delta(\vec k) = \bar D(k,k_1,k_2,k_3) -  2\bar k k_4$ and thus 
\[
\frac{1}{\nu |\bar k|} \leq |k_4 - \frac{\bar D}{2\bar k}|\leq \frac{1}{\nu |\bar k|} + C_a \frac\mu\nu.
\]
Hence $k_4$ belongs to the reunion of two intervals of size $C_a\frac\mu{L}$. 

\textbf{Second case : } $|\bar j| < \mu^{-1}$. We have 
\[
\mu^{-1} \leq |\bar k| \leq |\bar j| + |\bar j -\bar k| \leq \mu^{-1} + \frac2{L}.
\]
Therefore, $k_3$ belongs to the reunion of two intervals of size $\frac2{L}$.

\textbf{Third case : } $|\bar j_\sigma |< \mu^{-1}$. Similar to second case. 

Therefore,
\[
|K_{\kappa,L,3}|\lesssim_{a,f,g} \frac{\mu\nu}{L} + \frac{\nu}{L}
\]
and we can conclude.

\end{proof}

From Proposition \ref{prop:diracdeltas}, we therefore get
\[
\lim_{L\rightarrow \infty} I_L(t) = \frac1{12\pi^4}\int_{\R^6} \delta(k + \sum_{j=1}^5 (-1)^j k_j)  \delta (\Delta (\vec k) ) \frac1{k-k_1 + k_2 - k_3 } \prod_{j=1}^5|a(k_j)|^2\hat f(k) \overline{\hat g(k)} d\vec k
\]
and get Theorem \ref{theo:main2}.

\appendix

\section{Tree glossary}\label{app:gloss}

\subsection{Labeled trees}

We draw some trees corresponding to Definition \ref{def:labelledTrees} in Figures \ref{fig:labelledtrees01} and \ref{fig:labelledtrees2}. The squares represent leaves and the circles represent nodes. We have $T_0 \in \mathcal T_0[k]$, $T_1 \in \mathcal T_1[k]$ and $T_2 \in \mathcal T_2[k]$. We fix $\Delta = \Delta_1 = k^2 - k_1^2 + k_2^2 - k_3^2 + k_4^2 - k_5^2$, and $\Delta_2 = k_1^2 - j_1^2 + j_2^2 - j_3^2 + j_4^2-j_5^2$. Note that, in mathematical writing, we have 
\[
T_0 = (k),\quad T_1 = ((k_1),(k_2),(k_3),(k_4),(k_5),k), 
\]
and
\[ 
T_2 = (((j_1),(j_2),(j_3),(j_4),(j_5),k_1),(k_2),(k_3),(k_4),(k_5),k).
\]

\begin{figure}[!h]\begin{center}
\includegraphics[width=10cm]{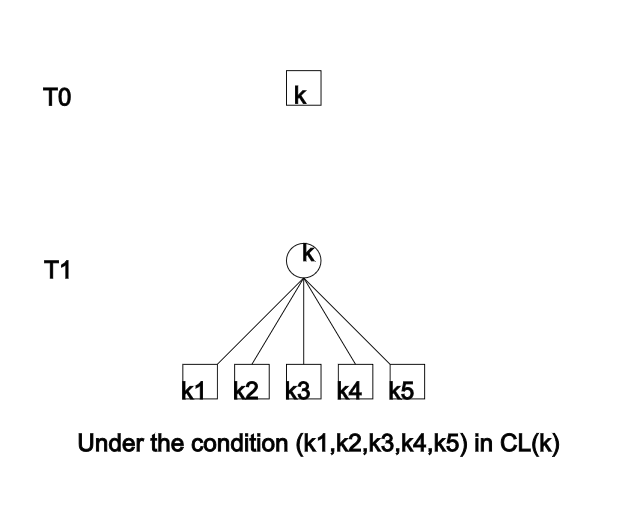}
\caption{Labelled trees with 0 and 1 nodes}
\label{fig:labelledtrees01}
\end{center}
\end{figure}

\begin{figure}[!h]\begin{center}
\includegraphics[width=10cm]{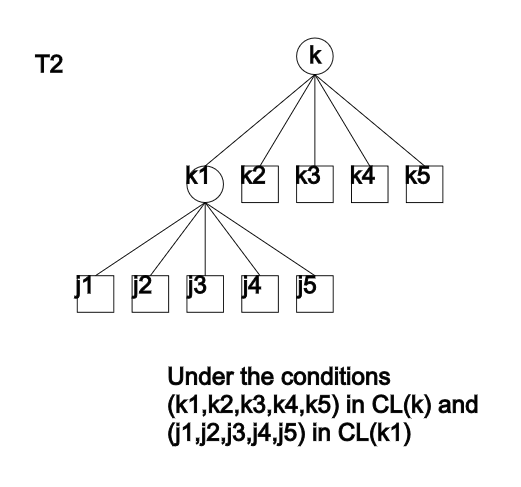}
\caption{Labelled tree with 2 nodes}
\label{fig:labelledtrees2}
\end{center}
\end{figure}

Keeping in mind the previous examples, we have corresponding to Definition \ref{def:functionsTrees} :
\begin{eqnarray*}
F_{T_0}(t)&=& 1 ,\\
F_{T_1} (t) &=& -i\int_{0}^t e^{i\Delta \tau}d\tau ,\\
F_{T_2}(t) &=& -\int_{0}^t e^{i\Delta_1 \tau}\int_{0}^\tau e^{i\Delta_2 s }ds d\tau ,\\
g_{T_0} &=& g_{Lk},  \\
 g_{T_1} &=& g_{Lk_1}\diamond \bar g_{Lk_2} \diamond g_{Lk_3} \diamond \bar g_{Lk_4} \diamond g_{Lk_5}, \\
 g_{T_2} &=&   g_{Lj_1}\diamond \bar g_{Lj_2} \diamond g_{Lj_3} \diamond \bar g_{Lj_4}\diamond g_{Lj_5}\diamond \bar g_{Lk_2} \diamond g_{Lk_3} \diamond \bar g_{Lk_4}\diamond g_{Lk_5}, 
\end{eqnarray*}
and 
\begin{eqnarray*}
A_{T_0} &=& a(k), \\
A_{T_1} &=& a(k_1)\bar a(k_2) a(k_3) \bar a(k_4) a(k_5), \\
A_{T_1} &=&  a(j_1)\bar a(j_2) a(j_3) \bar a(j_4) a(j_5)\bar a(k_2) a(k_3) \bar a(k_4) a(k_5).
\end{eqnarray*}

We have, corresponding to Definition \ref{def:labelsTrees},
\[
\vec k_0:=\vec T_0 = k,\quad \vec k_1:= \vec T_1 = (k_1,k_2,k_3,k_4,k_5),\quad \vec k_2 := \vec T_2 = (j_1,j_2,j_3,j_4,j_5,k_2,k_3,k_4,k_5).
\]

\newpage

\subsection{Unlabeled trees}

We draw a picture corresponding to Definition \ref{def:unlabelledtree} in Figure \ref{fig:unlabelledtrees}. In mathematical writing, we have $\texttt{t}_0 = \bot$, $\texttt{t}_1 = (\bot,\bot,\bot,\bot,\bot)$ and $\texttt{t}_2 = ((\bot,\bot,\bot,\bot,\bot),\bot,\bot,\bot,\bot)$. We also have $T_0 = \texttt{t}_0(k)$, $T_1 = \texttt{t}_1((k_1,k_2,k_3,k_4,k_5))$ and 
\[
T_2 = \texttt{t}_2((j_1,j_2,j_3,j_4,j_5,k_2,k_3,k_4,k_5)).
\]

\begin{figure}[!h]\begin{center}
\includegraphics[width=10cm]{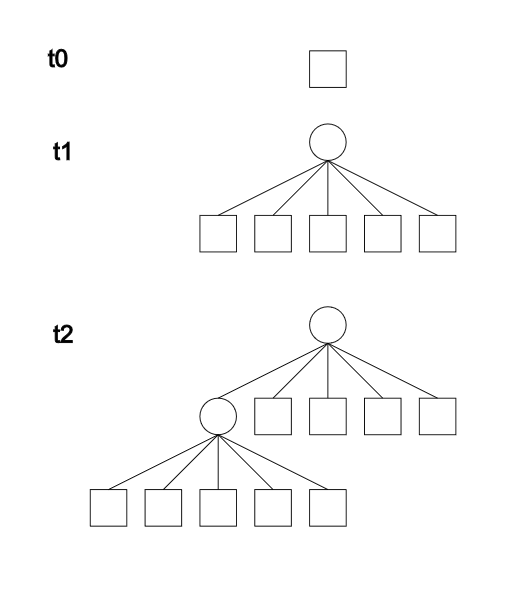}
\caption{Unlabelled trees with 0, 1 and 2 nodes}
\label{fig:unlabelledtrees}
\end{center}
\end{figure}

We draw a picture corresponding to Definition \ref{def:SetOrder} in Figure \ref{fig:order}. We forgot some parenthesis as they were redundant.

\begin{figure}[!h]\begin{center}
\includegraphics[width=15cm]{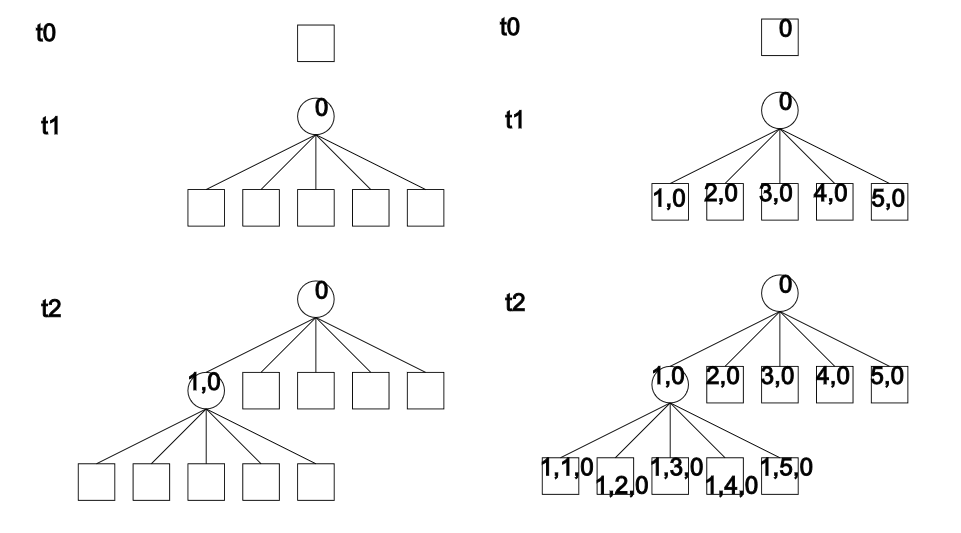}
\caption{Ordering labels of nodes and leaves}
\label{fig:order}
\end{center}
\end{figure}

Corresponding to Definition \ref{def:functionLabels}, we have 
\[
k_{\texttt t_0, \vec k_0}  : 0 \mapsto k, \quad
k_{\texttt t_1,\vec k_1} : \begin{array}{c} 0 \mapsto k \\ (l,0) \mapsto k_l\end{array}
\]
with $k = \sum_{l=1}^5 (-1)^{l+1} k_l$ and 
\[
k_{\texttt t_2,\vec k_2} : \begin{array}{c}
0 \mapsto k \\
(l,0) \mapsto k_l \\
(1,l,0) \mapsto j_l \end{array}
\]
with $k_1 = \sum_{l=1}^5 (-1)^{l+1} j_l$ and $k = \sum_{l=1}^5 (-1)^{l+1}k_l$.

We also have 
\[
\Omega_{\texttt t_1,\vec k_1} : 0 \mapsto \Delta,\quad \Omega_{\texttt t_2,\vec k_2} : \begin{array}{c} 0 \mapsto \Delta_1 \\ (1,0) \mapsto \Delta_2 \end{array} .
\]

\newpage

\subsection{Node ordering}

Finally, corresponding to Definition \ref{def:orderonnodes}, we write explicitly the partial order $R_{\texttt t_4}$ on the tree with four nodes of Figure \ref{fig:treeWithFourNodes}. We have 
\[
(1,4,0) R_{\texttt t_4} (1,0) R_{\texttt t_4} 0 \quad \textrm{and} \quad (5,0) R_{\texttt t_4} 0
\]
and the other nodes are not comparable. In other words, $(1,0)$ is not comparable to $(5,0)$ but also $(1,4,0)$ is not comparable to $(5,0)$. 

\begin{figure}[!h]\begin{center}
\includegraphics[width=10cm]{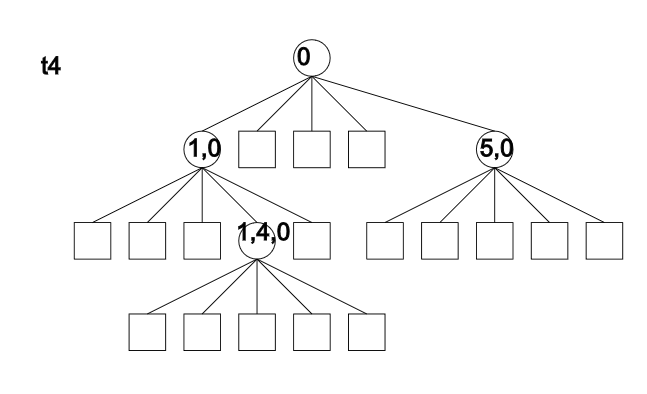}
\caption{A tree with four nodes}
\label{fig:treeWithFourNodes}
\end{center}
\end{figure}

\newpage

\section{A program in Python}\label{app:program}

Here, we present a program on Python designed to give the number of maps $\kappa:[0,4] \rightarrow \{-1,0,1\}$ such that the number
\[
\kappa(0)^2 + \sum_{j=1}^5 (-1)^j \kappa(j)^2
\]
is equal to $0$, $1$ or $-1$ in $\mathbb F_3$. 

\noindent\texttt{
def hs(k) :\\
\indent    a,b,c = 0,0,0\\
\indent    for x in range(3):\\
\indent\indent         for x1 in range(3):\\
\indent\indent\indent            for x2 in range(3):\\
\indent\indent\indent\indent                for x3 in range(3):\\
\indent\indent\indent\indent\indent                    for x4 in range(3):\\
\indent\indent\indent\indent\indent\indent                        if (x**2-x1**2+x2**2-x3**2+x4**2-(x-x1+x2-x3+x4)**2)\%3 == 0: a=a+1\\
\indent\indent\indent\indent\indent\indent                        else:\\
\indent\indent\indent\indent\indent\indent\indent                            if (x**2-x1**2+x2**2-x3**2+x4**2-(x-x1+x2-x3+x4)**2)\%3 == 1: b=b+1\\
\indent\indent\indent\indent\indent\indent\indent                            else: c=c+1\\
\indent    return a,b,c}
    
The program returns : $(99,72,72)$. 

\bibliographystyle{amsplain}
\bibliography{SQbib} 

\providecommand{\bysame}{\leavevmode\hbox to3em{\hrulefill}\thinspace}
\providecommand{\MR}{\relax\ifhmode\unskip\space\fi MR }
\providecommand{\MRhref}[2]{%
  \href{http://www.ams.org/mathscinet-getitem?mr=#1}{#2}
}
\providecommand{\href}[2]{#2}
\begin{thebibliography}{10}

\bibitem{BGHS}
Tristan Buckmaster, Pierre Germain, Zaher Hani, and Jalal Shatah, \emph{Onset
  of the wave turbulence description of the longtime behavior of the nonlinear
  {S}chr\"odinger equation}, arXiv preprint arXiv:1907.03667 (2019).

\bibitem{CoG19}
Charles Collot and Pierre Germain, \emph{On the derivation of the homogeneous
  kinetic wave equation}, arXiv preprint arXiv:1912.10368 (2019).

\bibitem{CoGe20}
\bysame, \emph{Derivation of the homogeneous kinetic wave equation: longer time
  scales}, arXiv preprint arXiv:2007.03508 (2020).

\bibitem{dS15}
Anne-Sophie de~Suzzoni, \emph{On the use of normal forms in the propagation of
  random waves}, J. Math. Phys. \textbf{56} (2015), no.~2, 021501, 27.
  \MR{3390853}

\bibitem{dSTont}
Anne-Sophie de~Suzzoni and Nikolay Tzvetkov, \emph{On the propagation of weakly
  nonlinear random dispersive waves}, Arch. Ration. Mech. Anal. \textbf{212}
  (2014), no.~3, 849--874. \MR{3187679}

\bibitem{denghani19}
Yu~Deng and Zaher Hani, \emph{On the derivation of the wave kinetic equation
  for {NLS}}, arXiv preprint arXiv:1912.09518 (2019).

\bibitem{Douady}
Adrien Douady, \emph{Le doublement de l'angle}, Journées X-UPS (1996) (fr),
  talk:5.

\bibitem{DyKuk3}
Andrey Dymov and Sergei Kuksin, \emph{Formal expansions in stochastic model for
  wave turbulence 1: kinetic limit}, arXiv preprint arXiv:1907.04531 (2019).

\bibitem{DyKuk2}
\bysame, \emph{Formal expansions in stochastic model for wave turbulence 2:
  method of diagram decomposition}, arXiv preprint arXiv:1907.02279 (2019).

\bibitem{DyKuk1}
\bysame, \emph{On the {Z}akharov-{L}'vov stochastic model for wave turbulence},
  arXiv preprint arXiv:1907.05044 (2019).

\bibitem{Janson}
Svante Janson, \emph{Gaussian {H}ilbert spaces}, Cambridge Tracts in
  Mathematics, vol. 129, Cambridge University Press, Cambridge, 1997.
  \MR{1474726}

\bibitem{KL2010}
Sivaditya Kaligotla and S.~Lototsky, \emph{Wick product in the stochastic
  burgers equation: A curse or a cure?}, Asymptotic Analysis \textbf{75}
  (2010).

\bibitem{KSWY1998}
Yuri~G. Kondratiev, Ludwig Streit, Werner Westerkamp, and Jia-an Yan,
  \emph{Generalized functions in infinite-dimensional analysis}, Hiroshima
  Math. J. \textbf{28} (1998), no.~2, 213--260.

\bibitem{LOP2004}
Arne Løkka, Bernt Øksendal, and Frank Proske, \emph{Stochastic partial
  differential equations driven by {L}évy space-time white noise}, Ann. Appl.
  Probab. \textbf{14} (2004), no.~3, 1506--1528.

\bibitem{LukSpo}
Jani Lukkarinen and Herbert Spohn, \emph{Weakly nonlinear {S}chr\"odinger
  equation with random initial data}, Invent. Math. \textbf{183} (2011), no.~1,
  79--188. \MR{2755061}

\bibitem{Naz}
Sergey Nazarenko, \emph{Wave turbulence}, Lecture Notes in Physics, vol. 825,
  Springer, Heidelberg, 2011. \MR{3014432}

\bibitem{zakharov1967}
V.~E. Zakharov and N.~N. Filonenko, \emph{Weak turbulence of capillary waves},
  Journal of Applied Mechanics and Technical Physics \textbf{8} (1967), no.~5,
  37--40.

\end{thebibliography}

\end{document}